\def\<{\langle}
\def\>{\rangle}
\def\c{\cdot}
\newtheorem{thm}{Theorem}[section]
\newtheorem{lem}[thm]{Lemma}
\newtheorem{cor}[thm]{Corollary}
\newtheorem{pro}[thm]{Proposition}
\newtheorem{ex}[thm]{Example}
\theoremstyle{definition}
\newtheorem{defi}{Definition}[section]
\theoremstyle{remark}
\newtheorem{rmk}{Remark}[section]
\begin{document}
\title{\bf $\mathcal{O}$-operators on Lie triple systems}\author{\bf T. Chtioui, A. Hajjaji, S. Mabrouk, A. Makhlouf}
\author{{ Taoufik Chtioui$^{1}$
 \footnote { E-mail: chtioui.taoufik@yahoo.fr}
,\  Atef Hajjaji$^{1}$
    \footnote { E-mail:  atefhajjaji100@gmail.com}
,\  Sami Mabrouk$^{2}$
 \footnote { E-mail: mabrouksami00@yahoo.fr }
\ and Abdenacer Makhlouf$^{3}$
 \footnote { E-mail: abdenacer.makhlouf@uha.fr $($Corresponding author$)$}
}\\
{\small 1.  University of Sfax, Faculty of Sciences,  BP
1171, 3038 Sfax, Tunisia} \\
{\small 2.  University of Gafsa, Faculty of Sciences, 2112 Gafsa, Tunisia}\\
{\small 3.~ IRIMAS - Département de Mathématiques, 18, rue des frères Lumière,
F-68093 Mulhouse, France}}
\date{}
\maketitle
\begin{abstract}
The purpose of this paper is to  study  cohomology and deformations of $\mathcal{O}$-operators on  Lie triple systems. We define a cohomology
of an $\mathcal{O}$-operator $T$ as the Lie-Yamaguti cohomology of a certain Lie triple system induced by $T$ with coefficients in
a suitable representation. Then we consider infinitesimal  and formal deformations of
$\mathcal{O}$-operators from cohomological viewpoint. Moreover we provide relationships between $\mathcal{O}$-operators on Lie algebras and associated Lie triple systems.
\end{abstract}

\textbf{Key words}:  Lie triple systems, $\mathcal{O}$-operator, representation, Lie-Yamaguti cohomology, deformation.

\textbf{Mathematics Subject Classification} (2020): 17A40, 17B56, 17B10, 17B38.

\numberwithin{equation}{section}

\tableofcontents

\section{Introduction}

The concept of Lie triple system   was introduced  first  by  Jacobson \cite{Jacobson}. The present formulation is due to Yamguti \cite{Yamaguti}. Moreover, it appeared in  Cartan's work on Riemannian Geometry  \cite{Cartan} and was strongly developed for Symmetric spaces and related spaces. Indeed,  the tangent space of a symmetric space is a Lie triple system. It turns out that  they have important applications in physics, in particular, in elementary particle theory and the theory of quantum mechanics, as well as numerical analysis of differential equations. They  have become an interesting subject in mathematics, their structure have been studied first by  Lister in \cite{Lister}.

The concept of $\mathcal{O}$-operators on Lie algebras appeared first, in a paper of
B. A. Kupershmidt \cite{Kupershmidt} as an operator analogue of classical $r$-matrices and Poisson structures. However,
Rota-Baxter operators were  introduced by G. Baxter \cite{Baxter} in his  study of fluctuation theory
in probability. Then developed by  G.-C. Rota \cite{Rota} in  Combinatorics. They have important applications in the algebraic aspects of the renormalization
in quantum field theory \cite{Connes}. Rota-Baxter operators on Lie algebras (resp. associative algebras)
are also related to the splitting  of algebraic structures.  In \cite{Rui}, The authors introduced the notion of a Rota-Baxter operator on a $3$-Lie algebra.  Recently, in order to construct solutions of the classical $3$-Lie
Yang-Baxter equation, the authors in \cite{Bai1} introduced a more general notion  called 
relative Rota-Baxter operator on a $3$-Lie algebra with respect to a representation .

 The  deformation theory using formal power series and suitable cohomologies was initiated by  Gerstenhaber
\cite{Gerstenhaber} for associative algebras. Nijenhuis and Richardson extended this study to Lie algebras \cite{Nijenhuis}. Deformations of  $3$-Lie algebras and Lie triple systems  were studied in \cite{Figueroa} and  \cite{Kubo,Zhang} respectively.
Recently deformations of certain operators, e.g. morphisms and Rota-Baxter operators
($\mathcal{O}$-operators) were deeply studied, see \cite{Makhlouf,Das,Fregier,Mandal,Tang}.
One needs a  cohomology to control deformations and extension problems of a given
algebraic structure. Cohomologies of various algebraic structures, such as associative
algebras, Lie algebras, Leibniz algebras, pre-Lie algebras and $n$-Lie algebras, are well known. Cohomology of Lie triple systems was introduced by K. Yamaguti in \cite{Yamaguti}. Recently, cohomology of Rota-Baxter operators ($\mathcal{O}$-operators) on Lie algebras and associative
algebras were developed in \cite{Tang,Das} respectively.  In \cite{Sheng},
the authors constructed a Lie $3$-algebra whose Maurer-Cartan elements are $\mathcal{O}$-operators on $3$-Lie algebras, and studied cohomologies and deformations of $\mathcal{O}$-operators on $3$-Lie algebras, see also \cite{chtioui}.
 
Inspired by these works, we tackle in this paper the cohomology theory of $\mathcal{O}$-operators on  Lie triple systems. We define a cohomology
of an $\mathcal{O}$-operator $T$ as the Lie-Yamaguti cohomology of a certain Lie triple system induced by $T$ with coefficients in
a suitable representation. Moreover, we provide  some connections between $\mathcal{O}$-operators on Lie algebras and Lie triple systems.
 In Section 2, we summarize som basics and briefly recall representations and cohomology
of Lie triple systems. In Section 3, we investigate  $\mathcal{O}$-operator and Nijenhuis operator on Lie triple systems. In Section 4, we define the cohomology of $\mathcal{O}$-operator on Lie triple system using the underlying Lie triple systems  of the $\mathcal{O}$-operator. Section 5 deals with one-parameter deformations of
$\mathcal{O}$-operators using the cohomology theory established in Section 4. Finally, in Section 6, we describe some connections between cohomology of  $\mathcal{O}$-operators on Lie algebras and associated Lie triple systems.

In this paper all vector spaces are considered over a field $\mathbb{F}$  of characteristic $0$.
\section{Preliminaries}
In this section, we recall   representations and cohomology theory  of Lie triple systems.
\begin{defi}
A Lie triple systems (L.t.s) is a vector space $L$ endowed with a ternary bracket $[\cdot,\cdot,\cdot]: \wedge^2L \otimes L \to L$ satisfying:
\begin{align}
    & [x,y,z]+[y,z,x]+[z,x,y]=0, \label{lts 1}
    \\
    & [x,y,[z,t,e]]=[[x,y,z],t,e]+[z,[x,y,t],e]+[z,t,[x,y,e]], \label{lts 2}
\end{align}
for any $x,y,z,t,e \in L$.
\begin{ex}\label{lts}
Let $(L,[\c,\c])$ be a Lie algebra. We define $[\c,\c,\c]:(\wedge^{2}L)\otimes L\rightarrow L$ by
\begin{equation}
    [x,y,z]=[[x,y],z],\quad \forall x,y,z \in L.
\end{equation}
Then $(L,[\c,\c,\c])$  becomes a L.t.s naturally.
\end{ex}
\end{defi}
A morphism $f : (L, [\c, \c, \c]) \rightarrow (L^{'}
, [\c, \c, \c]^{'})$ of L.t.s is a linear map satisfying \begin{equation*}
    f([x, y, z]) = [f(x), f(y), f(z)]^{'},\quad \forall x,y,z\in L.
\end{equation*}
An isomorphism is a bijective morphism.\\

Denote by $X = (x, y)\in L\otimes L$ and $\mathcal{L}(X):L \rightarrow L$ defined by $\mathcal{L}(X)z_i = [x, y, z_i]$, then the identity \eqref{lts 2} can be rewritten
in the form
\begin{equation*}
    \mathcal{L}(X)[z_1, z_2, z_3] = [\mathcal{L}(X)z_1, z_2, z_3] + [z_1, \mathcal{L}(X)z_2, z_3] + [z_1, z_2, \mathcal{L}(X)z_3],
\end{equation*}
which means that $\mathcal{L}(X)$ is a derivation with respect to the bracket $[\c, \c,\c $].

In \cite{Yamaguti},  K. Yamaguti introduced the notion of representation and cohomology theory of L.t.s. Later, the authors in \cite{Harris,Hodge} and \cite{Kubo} studied the cohomology theory of L.t.s from a different point of view. K. Yamaguti’s
work can be described as follows.
\begin{defi}
A representation of a L.t.s $L$ on a vector space $V$ is a bilinear map $\theta: \otimes^{2} L \rightarrow End(V)$, such that the following conditions are satisfied:
\begin{align}
    & \theta(z,t)\theta(x,y)-\theta(y,t)\theta(x,z)-\theta(x,[y,z,t])+D(y,z)\theta(x,t)=0, \label{rep lts 1}
    \\
    &\theta(z,t)D(x,y)-D(x,y)\theta(z,t)+\theta([x,y,z],t)+\theta(z,[x,y,t])=0,  \label{rep lts 2}
\end{align}
where $D(x,y)=\theta(y,x)-\theta(x,y)$, for all $x,y,z,t\in L$. We denote by 
$(L, [\cdot,\cdot, \cdot];\theta)$, the \textsf{\textsf{L.t.sRep}} pair.
\end{defi}

\begin{rmk}
Let $L$ be a L.t.s and $\theta(x,y)$ be the linear map $z \mapsto [z, x, y]$ of $L$ into itself for all $x, y \in L$. Using \eqref{lts 2}, then
we can prove that $L$ is an $L$-module. Moreover, by \eqref{lts 1}, $D(x, y)$ becomes a linear map $z \mapsto[x, y, z]$ (inner derivation). In this paper, we define $\theta (x, y):=\mathcal{R}(x, y), D(x, y):=\mathcal{L}(x, y)$ where $\mathcal{L}(x,y)(z) =
[x, y, z],\; \mathcal{R}(x, y)(z) = [z, x, y]$.
\end{rmk}
As usual, we have a characterisation of a representation by a semi-direct product i.e. $(V,\theta)$ is a representation of a L.t.s $L$ if and only if $L\oplus V$ is a L.t.s under the following bracket
\begin{align}\label{semidirect product}
[x+u,y+v,z+w]_{L\oplus V}=[x,y,z]+\theta(y,z)u-\theta(x,z)v+D(x,y)w,
\end{align}
for any $x,y,z \in L$ and $u,v,w \in V$.

Let
$(L, [\cdot,\cdot, \cdot];\theta)$ be a \textsf{L.t.sRep} pair. For each $n \geqslant 0$, we denote by $C^{2n+1}(L,V)$, the vector space of $(2n+1)$-cochains of $L$ with coefficients in $V$: $f\in C^{2n+1}(L,V)$ is a multilinear function of $\times^{2n+1} L$ into $V$ satisfying

$$f(x_1,x_2,\cdots,x_{2n-2},x,x,y)=0,$$
and
$$f(x_1, x_2, \cdots, x_{2n-2}, x, y, z) + f(x_1, x_2, \cdots, x_{2n-2}, y, z, x) + f(x_1, x_2,\cdots, x_{2n-2}, z, x, y) = 0.$$

The Yamaguti coboundary operator $\delta^{2n-1}: C^{2n-1}(L,V) \rightarrow C^{2n+1}(L,V) $ is defined by

\begin{align}
  &  \delta^{2n-1} f(x_1,x_2, \cdots , x_{2n+1})\nonumber\\
    =&\theta(x_{2n},x_{2n+1})f(x_1,x_2, \cdots , x_{2n-1})- \theta(x_{2n-1},x_{2n+1})f(x_1,x_2, \cdots , x_{2n-2},x_{2n}) \nonumber \\
    +& \sum_{k=1}^n (-1)^{n+k}D(x_{2k-1},x_{2k})f(x_1,x_2, \cdots , \widehat{x}_{2k-1},\widehat{x}_{2k}, \cdots , x_{2n+1}) \nonumber \\
    +& \sum_{k=1}^n \sum_{j=2k+1}^{2n+1}  (-1)^{n+k+1} f(x_1,x_2, \cdots,  \widehat{x}_{2k-1},\widehat{x}_{2k}, \cdots, [x_{2k-1},x_{2k},x_j],\cdots , x_{2n+1})
\end{align}
for all $f \in C^{2n-1}(L,V),\ n\geqslant 1$, where $\;\widehat{}\;$ denotes omission. The Yamaguti cochain forms a complex with this coboundary as follow
$$ C^{1}(L,V)\overset{\delta^{1}}{\longrightarrow} C^{3}(L,V)\overset{\delta^{3}}{\longrightarrow}  C^{5}(L,V) \longrightarrow \cdots,$$
and $\delta^{2n+1}\circ \delta^{2n-1}=0\;for\;n=1,2,\cdots$ ( see \cite{Yamaguti} for more details). Hence we get the
Yamaguti cohomology group  $H^{\bullet}(L , V ) = Z^{\bullet}(L , V )/B^{\bullet}(L , V )$, where $Z^{\bullet}(L , V )$ is the space of
cocycles and $B^{\bullet}(L , V )$ is the space of coboundaries. 
\begin{defi}\cite{Zhang}\label{cocycles}
Let $(V,\theta)$ be a representation of a L.t.s $L$.
\begin{enumerate}
    \item 
A linear map $f\in C^{1}(L, V )$
is a $1$-cocycle (closed) if
\begin{equation}
    D(x_1,x_2)f(x_3)-\theta(x_1,x_3)f(x_2)+\theta(x_2,x_3)f(x_1)-f([x_1,x_2,x_3])=0,
\end{equation}
and a map $f\in C^{3}(L,V)$ is a $3$-coboundary if there exists a map $g \in C^{1}(L, V )$ such that $f=\delta^{1}g$.
\item A map $f\in C^{3}(L,V)$ is called a $3$-cocycle if $\forall x_1,x_2,x_3,y_1,y_2,y_3\in L$,
\begin{align}
  \label{SkewCochain}  &f(x_1,x_1,x_2)=0,\\
    &f(x_1,x_2,x_3)+f(x_2,x_3,x_1)+f(x_3,x_1,x_2)=0,\\
    &f(x_1,x_2,[y_1,y_2,y_3])+D(x_1,x_2)f(y_1,y_2,y_3)\nonumber\\
    &=f([x_1,x_2,y_1],y_2,y_3)+f(y_1,[x_1,x_2,y_2],y_3)+f(y_1,y_2,[x_1,x_2,y_3])\nonumber\\
    &+\theta(y_2,y_3)f(x_1,x_2,y_1)-\theta(y_1,y_3)f(x_1,x_2,y_2)+D(y_1,y_2)f(x_1,x_2,y_3).
\end{align}
\end{enumerate}

\end{defi}

\section{$\mathcal{O}$-operators on Lie triple systems}
In this section, we recall the notion of pre-Lie triple system structure introduced in \cite{Mabrouk} and define Rota-Baxter operators, $\mathcal{O}$-operators on L.t.s, then 
we study their morphisms. Moreover, we give some characterisations of $\mathcal{O}$-operators in terms of  Nijenhuis operators and graphs. 
\begin{defi}
Let $L$ be a L.t.s.  A linear map $R : L \rightarrow L$ is said to be a
Rota-Baxter operator of weight $0$ if it satisfies
\begin{equation*}
    [R(x),R(y),R(z)]=R\Big([R(x),R(y),z]+[R(x),y,R(z)]+[x,R(y),R(z)]\Big),\;\forall x,y,z \in L.
\end{equation*}
\end{defi}
The notion of $\mathcal{O}$-operator (also called relative Rota-Baxter
operator or Kupershmidt operator) is a generalization of Rota-Baxter operators in the presence of arbitrary representation.
\begin{defi}
Let
$(L, [\cdot,\cdot, \cdot];\theta)$ be a \textsf{L.t.sRep} pair.  A linear map $T: V \to L$ is called an $\mathcal{O}$-operator of $L$ with respect to $\theta $ if it satisfies
\begin{align}\label{O op on lts}
[Tu,Tv,Tw]=T\Big(D(Tu,Tv)w+\theta(Tv,Tw)u-\theta(Tu,Tw)v\Big), \quad \forall u,v,w \in V.
\end{align}

If $V=L$, then $T$ is a Rota-Baxter operator of weight $0$ on $L$  with respect to the adjoint representation. Thus  $\mathcal{O}$-operators are  generalization of Rota-Baxter operators.
\end{defi}
Now we give the following characterization of an $\mathcal{O}$-operator in terms of graphs.
\begin{pro}\label{graph}
A linear map $T:V\rightarrow L$ is an $\mathcal{O}$-operator if and only if the graph $Gr(T)=\{(Tu,u)|\;u\in V\}$ is a subalgebra of the
semi-direct product $L \oplus V$.
\end{pro}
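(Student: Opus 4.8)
The plan is to reduce the claim to a single direct computation identifying the semi-direct product bracket with the defining identity \eqref{O op on lts}. First I would record that $Gr(T)$ is automatically a linear subspace of $L\oplus V$: since $T$ is linear, $\lambda(Tu+u)+\mu(Tv+v)=T(\lambda u+\mu v)+(\lambda u+\mu v)\in Gr(T)$ for all scalars $\lambda,\mu$ and all $u,v\in V$. Consequently $Gr(T)$ is a subalgebra if and only if it is closed under the ternary bracket of the semi-direct product.

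Next I would evaluate, for arbitrary $u,v,w\in V$, the bracket of the three generators $Tu+u$, $Tv+v$, $Tw+w$ by substituting $x=Tu$, $y=Tv$, $z=Tw$ into formula \eqref{semidirect product}. This yields
\[
[Tu+u,Tv+v,Tw+w]_{L\oplus V}=[Tu,Tv,Tw]+\Big(\theta(Tv,Tw)u-\theta(Tu,Tw)v+D(Tu,Tv)w\Big),
\]
so that its $L$-component is $[Tu,Tv,Tw]$ while its $V$-component is $\theta(Tv,Tw)u-\theta(Tu,Tw)v+D(Tu,Tv)w$.

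The key observation is that an element $a+p\in L\oplus V$ (with $a\in L$, $p\in V$) belongs to $Gr(T)$ precisely when $a=Tp$. Applying this criterion to the bracket above, closure of $Gr(T)$ is equivalent to demanding that, for all $u,v,w\in V$, the $L$-component equal $T$ applied to the $V$-component, that is,
\[
[Tu,Tv,Tw]=T\Big(\theta(Tv,Tw)u-\theta(Tu,Tw)v+D(Tu,Tv)w\Big).
\]
This is exactly the $\mathcal{O}$-operator identity \eqref{O op on lts}. Hence $Gr(T)$ is a subalgebra if and only if $T$ is an $\mathcal{O}$-operator, which proves both implications at once.

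Since the whole argument is a single substitution, there is no genuine obstacle; the only points requiring mild care are correctly matching which arguments lie in $L$ versus $V$ when feeding them into \eqref{semidirect product}, and verifying that the three resulting terms coincide in sign and order with those in \eqref{O op on lts}.
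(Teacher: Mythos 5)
Your proposal is correct and follows essentially the same route as the paper: expand the semi-direct product bracket on three graph elements via \eqref{semidirect product} and observe that membership of the result in $Gr(T)$ is exactly the condition that the $L$-component equal $T$ of the $V$-component, i.e.\ identity \eqref{O op on lts}. Your version is if anything slightly tidier, since it makes explicit that $Gr(T)$ is a linear subspace and states the membership criterion $a=Tp$ once, handling both implications simultaneously.
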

\begin{proof}
Let $(Tu,u)$, $(Tv,v)$ and $(Tw,w)$ $\in Gr(T)$. Then we have
\begin{align*}
&\quad \;[Tu+u,Tv+v,Tw+w]_{L \oplus V}
=[Tu,Tv,Tw]+ \theta(Tv,Tw)u-\theta(Tu,Tw)v+D(Tu,Tv)w.
\end{align*}
 Assume that  $Gr(T)$ is a subalgebra of the
semi-direct product $L\oplus V$, then  we have
\begin{align*}
&[Tu,Tv,Tw]=
T\Big(   \theta(Tv,Tw)u-\theta(Tu,Tw)v+D(Tu,Tv)w  \Big).
\end{align*}
On the other hand, if $T$ is an $\mathcal{O}$-operator,  we obtain
\begin{align*}
\quad \;[Tu+u,Tv+v,Tw+w]_{L \oplus V}
&=T\Big(   \theta(Tv,Tw)u-\theta(Tu,Tw)v+D(Tu,Tv)w  \Big) \\
 &+   \theta(Tv,Tw)u-\theta(Tu,Tw)v+D(Tu,Tv)w
 \in Gr(T).
\end{align*}
Hence $Gr(T)$ is a subalgebra of the
semi-direct product $L\oplus V$.
\end{proof}
In the following proposition, we show that an $\mathcal{O}$-operator can be lifted up to a
Rota-Baxter operator.
\begin{pro}
Let 
$(L, [\cdot,\cdot, \cdot];\theta)$ be a \textsf{L.t.sRep} pair and
$T : V \rightarrow L$ be a linear map. Define $\widehat{T} \in  End(L\oplus V)$ by $\widehat{T}(x, u) = (T(u), 0)$. Then $T$ is an
$\mathcal{O}$-operator if and only $\widehat{T}$ is a Rota-Baxter operator on $L\oplus V$.
\end{pro}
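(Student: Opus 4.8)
The plan is to verify the Rota-Baxter identity for $\widehat{T}$ directly on arbitrary elements of $L\oplus V$ and to observe that it collapses exactly onto the defining identity \eqref{O op on lts} of an $\mathcal{O}$-operator. First I would fix three arbitrary elements $(x,u),(y,v),(z,w)\in L\oplus V$ and record that, by definition, $\widehat{T}(x,u)=(Tu,0)$, so that applying $\widehat{T}$ always yields an element with vanishing $V$-component. Using the semi-direct product bracket \eqref{semidirect product}, the left-hand side of the Rota-Baxter condition becomes
$$[\widehat{T}(x,u),\widehat{T}(y,v),\widehat{T}(z,w)]_{L\oplus V}=[(Tu,0),(Tv,0),(Tw,0)]_{L\oplus V}=\big([Tu,Tv,Tw],\,0\big),$$
since every $V$-slot feeding into \eqref{semidirect product} is zero.

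Next I would compute the three mixed brackets appearing on the right-hand side one at a time, again via \eqref{semidirect product}, keeping track of which $V$-component survives. These are
$$[(Tu,0),(Tv,0),(z,w)]_{L\oplus V}=\big([Tu,Tv,z],\,D(Tu,Tv)w\big),$$
then
$$[(Tu,0),(y,v),(Tw,0)]_{L\oplus V}=\big([Tu,y,Tw],\,-\theta(Tu,Tw)v\big),$$
and finally
$$[(x,u),(Tv,0),(Tw,0)]_{L\oplus V}=\big([x,Tv,Tw],\,\theta(Tv,Tw)u\big).$$
Summing the three $V$-components and applying $\widehat{T}$, which annihilates every $L$-component, the right-hand side of the Rota-Baxter condition reduces to
$$\Big(T\big(D(Tu,Tv)w+\theta(Tv,Tw)u-\theta(Tu,Tw)v\big),\,0\Big).$$

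Comparing the two sides, the $V$-components agree trivially (both vanish), while equality of the $L$-components is precisely the identity \eqref{O op on lts} defining an $\mathcal{O}$-operator. Since the auxiliary $L$-entries $x,y,z$ drop out of both sides entirely, the Rota-Baxter identity for $\widehat{T}$ holds for all elements of $L\oplus V$ if and only if the $\mathcal{O}$-operator identity holds for all $u,v,w\in V$, which gives both implications simultaneously. There is no genuine obstacle here: the statement is a finite computation, and the only point requiring care is the bookkeeping in \eqref{semidirect product}, namely matching each of the three terms $\theta(y,z)u$, $-\theta(x,z)v$ and $D(x,y)w$ to the correct slot in each of the three mixed brackets, after which the equivalence follows from a one-line comparison of components.
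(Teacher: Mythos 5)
Your proposal is correct and follows essentially the same route as the paper: both compute the semi-direct product brackets term by term, observe that applying $\widehat{T}$ kills the $L$-components, and reduce the Rota-Baxter identity for $\widehat{T}$ to the single $L$-component equation, which is exactly \eqref{O op on lts}. The bookkeeping of the three mixed brackets matches the paper's computation, so nothing further is needed.
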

\begin{proof}
For any $x,y,z\in L$ and $u,v,w\in V$, we have
\begin{align*}
    &[\widehat{T}(x,u),\widehat{T}(y,v),\widehat{T}(z,w)]_{L\oplus V}\\
    &-\widehat{T}\Big([\widehat{T}(x,u),\widehat{T}(y,v),(z,w)]_{L\oplus V}+[\widehat{T}(x,u),(y,v),\widehat{T}(z,w)]_{L\oplus V}+[(x,u),\widehat{T}(y,v),\widehat{T}(z,w)]_{L\oplus V}\\
   = &[(Tu,0),(Tv,0),(Tw,0)]_{L\oplus V}\\
    &-\widehat{T}\Big([(Tu,0),(Tv,0),(z,w)]_{L\oplus V}+[(Tu,0),(y,v),(Tw,0)]_{L\oplus V}+[(x,u),(Tv,0),(Tw,0)]_{L\oplus V}\\
    =&\Big([Tu,Tv,Tw],0\Big)-\Big(T(   \theta(Tv,Tw)u-\theta(Tu,Tw)v+D(Tu,Tv)w),0  \Big)\\
    =&\Big([Tu,Tv,Tw]-T(   \theta(Tv,Tw)u-\theta(Tu,Tw)v+D(Tu,Tv)w),0\Big).
\end{align*}
Then $\widehat{T}$ is a Rota-Baxter operator on $L \oplus V$ if and only if $T$ is an $\mathcal{O}$-operator.
\end{proof}
\begin{ex}
{\rm Let $L$ be a 2-dimensional L.t.s with a basis $\{e_1,e_2\}$ and a bracket defined by}
$$[e_1,e_2,e_2]=e_1.$$
{\rm The operator}
$
T=\begin{pmatrix}
 0 & a \\
 0 & b
 \end{pmatrix}
$
{\rm is a Rota-Baxter operator on $L$.}
\end{ex}

\begin{ex}
{\rm Let $L$ be a 4-dimensional L.t.s with a basis $\{e_1,e_2,e_3,e_4\}$ and a bracket defined by}
$$[e_1,e_2,e_1]=e_4.$$
{\rm Then}
\begin{gather*}
T=\begin{pmatrix}
 0 & a &0 & 0\\
 0 & 0 & 0& 0\\
 b & c & d & e\\
 f & g & h & k
 \end{pmatrix}
\end{gather*}
{\rm  defines a Rota-Baxter operator on $L$.}
\end{ex}

Another characterization of an $\mathcal{O}$-operator  can be given in terms of Nijenhuis operators on L.t.s. First of all, we give the following definition of 
 Nijenhuis operator
 on a L.t.s $L$.
 \begin{defi}
 A linear map $N: L \to L$ is called Nijenhuis operator on a L.t.s $L$ if $N$ satisfies the following condition
\begin{align}\label{Nij op}
[Nx,Ny,Nz]=& N\Big([Nx,Ny,z]+[x,Ny,Nz]+[Nx,y,Nz] \nonumber\\
-& N\Big([Nx,y,z]+[x,Ny,z]+[x,y,Nz]  
-N [x,y,z]\Big)\Big).
\end{align}
\end{defi}
\begin{rmk}
The above definition of Nijenhuis operator on a L.t.s is similar to the definition of Nijenhuis operator on $3$-Lie algebras introduced in \cite{Liu} in the study of $2$-order
trivial deformations. On the other hand, in \cite{Zhang}, the author has introduced another notion of a Nijenhuis operator on a L.t.s in the study of $1$-order
trivial deformations. In that definition, there is a quite strong condition $[Nx_1, Nx_2, Nx_3] = 0$,
whereas the above definition for $n = 3$ is the Eq.\eqref{Nij op}. So obviously, the
above definition is different from  the definition given in \cite{Zhang} and we think that is the right definition.
\end{rmk}
\begin{lem}
If $N$ is a Nijenhuis operator on $L$. Then $(L,[\cdot,\cdot,\cdot]_N)$ is a L.t.s, where
\begin{align}\label{nijstructure}
[x,y,z]_N=& [Nx,Ny,z]+[x,Ny,Nz]+[Nx,y,Nz]\nonumber\\
-& N\Big([Nx,y,z]+[x,Ny,z]+[x,y,Nz]
 -N [x,y,z] \Big),
\end{align}
and $N$ is a homomorphism from $(L,[\cdot,\cdot,\cdot]_N)$ to $(L,[\cdot,\cdot,\cdot])$.
\end{lem}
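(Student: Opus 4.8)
The plan is to prove the two assertions in turn, handling the homomorphism property first because it is essentially free and then feeds into everything else.

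First I would observe that the homomorphism statement is merely a rereading of the definitions. Comparing \eqref{Nij op} with \eqref{nijstructure}, the right-hand side of \eqref{Nij op} is exactly $N$ applied to the expression defining $[x,y,z]_N$; hence the Nijenhuis condition reads
\[
[Nx,Ny,Nz]=N[x,y,z]_N,\qquad \forall\, x,y,z\in L,
\]
which is precisely the assertion that $N:(L,[\cdot,\cdot,\cdot]_N)\to(L,[\cdot,\cdot,\cdot])$ is a morphism. I would record this identity separately, since it is the main tool for checking the L.t.s axioms below.

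Next, to see that $[\cdot,\cdot,\cdot]_N$ is genuinely a bracket on $\wedge^2L\otimes L$, I would check skew-symmetry in the first two slots. Swapping $x\leftrightarrow y$ in \eqref{nijstructure} and using the skew-symmetry of $[\cdot,\cdot,\cdot]$ in its first two entries, every summand changes sign: $[Nx,Ny,z]$ changes sign directly, the pair $[x,Ny,Nz]$ and $[Nx,y,Nz]$ is exchanged up to sign, the three terms inside the $N(\cdots)$ bracket together change sign, and the purely quadratic term $N^2[x,y,z]$ changes sign as well. Thus $[x,y,z]_N=-[y,x,z]_N$, and in particular $[x,x,z]_N=0$. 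For the first L.t.s axiom \eqref{lts 1}, I would compute the cyclic sum $\mathfrak{S}_{x,y,z}[x,y,z]_N$ and group its summands according to their ``$N$-multidegree'' and the multiset of their entries. Each resulting group is a cyclic triple $[a,b,c]+[b,c,a]+[c,a,b]$ for the original bracket and so vanishes by \eqref{lts 1}, while the single purely quadratic term contributes $N^2\,\mathfrak{S}_{x,y,z}[x,y,z]=0$, again by \eqref{lts 1}. Hence the cyclic sum vanishes.

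The main obstacle is the fundamental identity \eqref{lts 2} for $[\cdot,\cdot,\cdot]_N$. I would expand both sides of
\[
[x,y,[z,t,e]_N]_N=[[x,y,z]_N,t,e]_N+[z,[x,y,t]_N,e]_N+[z,t,[x,y,e]_N]_N
\]
using \eqref{nijstructure}, expanding the ``bare'' inner brackets fully and, whenever $N$ is applied to an inner $[\cdot,\cdot,\cdot]_N$-bracket, replacing it by the homomorphism identity $N[\,\cdot,\cdot,\cdot\,]_N=[N\cdot,N\cdot,N\cdot]$ so that those inner brackets become ordinary brackets of $N$-images. Collecting terms by the number of factors carrying $N$, the highest-degree contribution reproduces \eqref{lts 2} evaluated at the images $Nx,Ny,Nz,Nt,Ne$, while the remaining terms of lower $N$-degree must be matched against one another; here the Nijenhuis relation \eqref{Nij op}, applied repeatedly to various triples built from $x,\dots,e$ and their $N$-images, is exactly what cancels the leftover discrepancies. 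This bookkeeping is long but mechanical, and I expect it to be the principal difficulty. As a conceptual check, note that when $N$ is invertible the identity $N[\,\cdot,\cdot,\cdot\,]_N=[N\cdot,N\cdot,N\cdot]$ exhibits $[\cdot,\cdot,\cdot]_N$ as the pullback $N^{-1}[N\cdot,N\cdot,N\cdot]$ of the L.t.s structure along the isomorphism $N$, whence all axioms hold automatically; the role of the Nijenhuis condition is to secure the same conclusion without assuming invertibility.
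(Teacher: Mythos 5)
Your proposal is correct and follows essentially the same route as the paper, which gives no details and simply states that the lemma ``follows from straightforward computations'': your verification of the homomorphism identity $N[x,y,z]_N=[Nx,Ny,Nz]$, of skew-symmetry, and of the cyclic identity (by grouping the cyclic sum into triples that vanish termwise) is accurate, and your plan for the fundamental identity is the expected direct expansion. The only caveat is that, exactly like the paper, you leave the fundamental-identity bookkeeping unexecuted and merely assert that the Nijenhuis relation cancels the lower-degree discrepancies; the strategy (and the invertible-case sanity check via the pullback $N^{-1}[N\cdot,N\cdot,N\cdot]$) is sound, so this matches the paper's level of rigor rather than falling short of it.
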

\begin{proof}
It follows from straightforward computations.
\end{proof}
The following result is also  straightforward, so we omit details.
\begin{pro}
Let $(L, [\cdot,\cdot, \cdot];\theta)$ be a \textsf{L.t.sRep} pair. A linear operator $T:V\rightarrow L$ is an $\mathcal{O}$-operator if and only if \begin{equation*}
     \overline{T}=\begin{pmatrix}
                                                   0 & T \\
                                                   0 & 0 \\
                                                 \end{pmatrix}:L\oplus V\rightarrow L\oplus V
\end{equation*}
is a Nijenhuis operator on the the semi-direct product L.t.s $L\oplus V$.
\end{pro}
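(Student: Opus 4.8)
The plan is to feed $N=\overline{T}$ into the Nijenhuis identity \eqref{Nij op} and watch it degenerate into the $\mathcal{O}$-operator identity \eqref{O op on lts}. The crucial preliminary remark is that $\overline{T}$ squares to zero: since $\overline{T}(x,u)=(Tu,0)$, one computes $\overline{T}^2(x,u)=\overline{T}(Tu,0)=(0,0)$, because $\overline{T}$ reads off the (now vanishing) second component. Consequently, when the nested occurrences of $N$ in \eqref{Nij op} are expanded, every summand carrying a factor $N^2$ or $N^3$ dies, and the Nijenhuis condition for $\overline{T}$ reduces to the single cubic identity
\begin{equation*}
[\overline{T}X,\overline{T}Y,\overline{T}Z]_{L\oplus V}=\overline{T}\Big([\overline{T}X,\overline{T}Y,Z]_{L\oplus V}+[X,\overline{T}Y,\overline{T}Z]_{L\oplus V}+[\overline{T}X,Y,\overline{T}Z]_{L\oplus V}\Big),
\end{equation*}
to be checked for all $X=(x,u)$, $Y=(y,v)$, $Z=(z,w)\in L\oplus V$.

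Next I would evaluate both sides using the semidirect product bracket \eqref{semidirect product}, exploiting the fact that $\overline{T}$ always outputs an element of $L\oplus\{0\}$ with vanishing $V$-component. The left-hand side is immediate: $[\overline{T}X,\overline{T}Y,\overline{T}Z]_{L\oplus V}=[(Tu,0),(Tv,0),(Tw,0)]_{L\oplus V}=([Tu,Tv,Tw],0)$, since all the $V$-slots are zero. For the right-hand side, each of the three inner brackets is computed from \eqref{semidirect product}; the point is that exactly one nonzero $V$-component survives in each, namely $D(Tu,Tv)w$ in $[\overline{T}X,\overline{T}Y,Z]_{L\oplus V}$, then $\theta(Tv,Tw)u$ in $[X,\overline{T}Y,\overline{T}Z]_{L\oplus V}$, and finally $-\theta(Tu,Tw)v$ in $[\overline{T}X,Y,\overline{T}Z]_{L\oplus V}$. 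Applying $\overline{T}$ then maps each such $V$-component through $T$ into the $L$-slot, so the right-hand side collapses to $\big(T(D(Tu,Tv)w+\theta(Tv,Tw)u-\theta(Tu,Tw)v),0\big)$.

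Comparing the two sides, the reduced Nijenhuis identity holds if and only if $[Tu,Tv,Tw]=T\big(D(Tu,Tv)w+\theta(Tv,Tw)u-\theta(Tu,Tw)v\big)$ for all $u,v,w\in V$, which is precisely \eqref{O op on lts}. Since every step above is an equivalence---the nilpotency reduction, the componentwise evaluation, and the final comparison---the two conditions are equivalent, proving the proposition. I expect the only real care needed to be bookkeeping: correctly reading off from the asymmetric formula \eqref{semidirect product} which of $\theta$ or $D$ (and which sign) attaches to each inner bracket, and confirming that no spurious $V$-component is introduced when $\overline{T}$ is applied. No genuine conceptual obstacle arises, the $\overline{T}^2=0$ collapse doing all the heavy lifting.
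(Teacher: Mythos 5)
Your proof is correct and is exactly the ``straightforward'' verification the paper omits: the nilpotency $\overline{T}^{2}=0$ kills the $N^{2}$ and $N^{3}$ terms in \eqref{Nij op}, reducing it to the Rota--Baxter-type identity, and the componentwise evaluation you carry out is the same computation the paper performs explicitly in its proof that $\widehat{T}$ is a Rota--Baxter operator on $L\oplus V$ iff $T$ is an $\mathcal{O}$-operator. All signs and the assignment of $D(Tu,Tv)w$, $\theta(Tv,Tw)u$, $-\theta(Tu,Tw)v$ to the three inner brackets check out against \eqref{semidirect product}.
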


Next, we recall the notion of pre-Lie triple systems introduced in \cite{Mabrouk} which is the structure induced from $\mathcal{O}$-operators. 
\begin{defi}
Let $L$ be a vector space with a $3$-linear map $\{\c,\c, \c\} :  \otimes^{3} L  \rightarrow L$. The pair $(L, \{\c,\c,\c\})$ is called a pre-Lie triple system if the following identities holds
\begin{align}
    &\label{cond1}\{x_5, x_1, [x_2, x_3, x_4]_C\} = \{\{x_5, x_1, x_2\}, x_3, x_4\} - \{\{x_5, x_1, x_3\}, x_2, x_4\}+ \{x_2, x_3, \{x_5, x_1, x_4\}\}^{*},\\
    &\label{cond2}\{x_1, x_2, \{x_5, x_3, x_4\}\}^{*} \;= \{\{x_1, x_2, x_5\}^{*}, x_3, x_4\} + \{x_5, [x_1, x_2, x_3]_C, x_4\}
+ \{x_5, x_3, [x_1, x_2, x_4]_C\}, 
\end{align}
where $\{\c,\c,\c\}^{*}$ and $[\c,\c,\c]_C$ are defined by
\begin{align}
    &\{x,y,z\}^{*}=\{z,y,x\}-\{z,x,y\},\\
    &\label{comm}[x,y,z]_C\;=\{z,y,x\}-\{z,x,y\}+\{x,y,z\}-\{y,x,z\}\nonumber\\
    &\quad \quad\quad \;\;\;\; =\{x,y,z\}^{*}+\{x,y,z\}-\{y,x,z\},
\end{align}
for any $x,y,z,x_i \in L$.
\end{defi}
It follows from \eqref{cond1} and \eqref{cond2} that the new operation $[\c,\c,\c]_C :\otimes^{3}L\rightarrow L$ defined by Eq. \eqref{comm}
turns out to be a L.t.s.

An $\mathcal{O}$-operator has an underlying pre-Lie triple system structure. 
\begin{pro}
Let
$(L, [\cdot,\cdot, \cdot];\theta)$ be a \textsf{L.t.sRep} pair. Suppose that the linear map $T : V \rightarrow L$ is an $\mathcal{O}$-operator associated to $(V, \theta)$. Then there
exists a pre-Lie triple system structure on $V$ given by
\begin{equation}
    \{u,v,w\}=\theta(Tv,Tw)u,\quad \forall u,v,w \in V.
\end{equation}
\end{pro}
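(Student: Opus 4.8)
The plan is to reduce both defining identities \eqref{cond1} and \eqref{cond2} to the two representation axioms \eqref{rep lts 1} and \eqref{rep lts 2}, after a preliminary simplification of the derived operations $\{\c,\c,\c\}^{*}$ and $[\c,\c,\c]_C$. First I would compute, directly from the definition $\{u,v,w\}=\theta(Tv,Tw)u$, that
\[
\{u,v,w\}^{*}=\{w,v,u\}-\{w,u,v\}=\theta(Tv,Tu)w-\theta(Tu,Tv)w=D(Tu,Tv)w,
\]
using $D(x,y)=\theta(y,x)-\theta(x,y)$. Substituting this into \eqref{comm} then gives
\[
[u,v,w]_C=D(Tu,Tv)w+\theta(Tv,Tw)u-\theta(Tu,Tw)v.
\]
Comparing with the $\mathcal{O}$-operator condition \eqref{O op on lts} yields the crucial intertwining relation $T([u,v,w]_C)=[Tu,Tv,Tw]$; that is, $T$ is a homomorphism from $(V,[\c,\c,\c]_C)$ to $(L,[\c,\c,\c])$. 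This is precisely what lets me push all brackets through $T$ and land inside the representation axioms.

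Next I would check \eqref{cond1}. Writing every term via $\{u,v,w\}=\theta(Tv,Tw)u$, the left-hand side becomes $\theta(Tx_1,T[x_2,x_3,x_4]_C)x_5=\theta(Tx_1,[Tx_2,Tx_3,Tx_4])x_5$ by the intertwining relation, while the three terms on the right combine into
\[
\theta(Tx_3,Tx_4)\theta(Tx_1,Tx_2)x_5-\theta(Tx_2,Tx_4)\theta(Tx_1,Tx_3)x_5+D(Tx_2,Tx_3)\theta(Tx_1,Tx_4)x_5.
\]
Setting $x=Tx_1$, $y=Tx_2$, $z=Tx_3$, $t=Tx_4$, the required equality is exactly \eqref{rep lts 1} evaluated on the images $Tx_i$ and applied to the vector $x_5$.

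Finally I would treat \eqref{cond2} the same way. Its left-hand side is $\{x_1,x_2,\theta(Tx_3,Tx_4)x_5\}^{*}=D(Tx_1,Tx_2)\theta(Tx_3,Tx_4)x_5$, and its right-hand side, after applying the intertwining relation to the two $[\c,\c,\c]_C$ terms, becomes
\[
\theta(Tx_3,Tx_4)D(Tx_1,Tx_2)x_5+\theta([Tx_1,Tx_2,Tx_3],Tx_4)x_5+\theta(Tx_3,[Tx_1,Tx_2,Tx_4])x_5,
\]
which with the same substitution $x=Tx_1,y=Tx_2,z=Tx_3,t=Tx_4$ is precisely \eqref{rep lts 2} applied to $x_5$. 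The hard part will not be conceptual, since once the intertwining relation is established the verification is pure bookkeeping; the one genuinely delicate point is the asymmetry of $\theta$ in its two arguments, which propagates into the signs and argument orders of $\{\c,\c,\c\}^{*}$ and $[\c,\c,\c]_C$. A single transposed pair of entries would destroy the exact match with \eqref{rep lts 1} and \eqref{rep lts 2}, so keeping the argument order of every $\theta$ and $D$ aligned with the representation axioms is the step that must be carried out with the most care.
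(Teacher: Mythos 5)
Your argument is correct, and it is worth noting that the paper itself gives no proof of this proposition (it is quoted from the reference [Mabrouk]), so your write-up actually supplies the missing verification. The key observations check out: from $\{u,v,w\}=\theta(Tv,Tw)u$ one gets $\{u,v,w\}^{*}=\theta(Tv,Tu)w-\theta(Tu,Tv)w=D(Tu,Tv)w$, hence $[u,v,w]_C=D(Tu,Tv)w+\theta(Tv,Tw)u-\theta(Tu,Tw)v$, and the $\mathcal{O}$-operator identity \eqref{O op on lts} is precisely the statement $T([u,v,w]_C)=[Tu,Tv,Tw]$. With that intertwining in hand, your reduction of \eqref{cond1} to \eqref{rep lts 1} and of \eqref{cond2} to \eqref{rep lts 2}, under the substitution $x=Tx_1$, $y=Tx_2$, $z=Tx_3$, $t=Tx_4$ applied to the vector $x_5$, matches term for term and sign for sign; I verified in particular that $\{x_1,x_2,\{x_5,x_3,x_4\}\}^{*}=D(Tx_1,Tx_2)\theta(Tx_3,Tx_4)x_5$ and $\{\{x_1,x_2,x_5\}^{*},x_3,x_4\}=\theta(Tx_3,Tx_4)D(Tx_1,Tx_2)x_5$, which are the two places where a transposition error would be easiest to make. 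You correctly identified the argument-order bookkeeping as the only delicate point, and your handling of it is accurate. As a byproduct, your computation of $[\cdot,\cdot,\cdot]_C$ also recovers the bracket $[\cdot,\cdot,\cdot]_T$ of Lemma \ref{struV}, which is consistent with the paper's later assertion that the subadjacent L.t.s of this pre-Lie triple system is the descendent structure of $T$.
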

Next, we study morphism between O-operators.
\begin{defi}
Let $T$ be an $\mathcal{O}$-operator on a \textsf{L.t.sRep} pair
$(L, [\cdot,\cdot, \cdot];\theta)$. Suppose $(L^{'}, [\cdot,\cdot, \cdot]^{'};\theta^{'})$ is another \textsf{L.t.sRep} pair. Let $T^{'}:V^{'}\rightarrow L^{'} $ be an $\mathcal{O}$-operator. A morphism of $\mathcal{O}$-operators from  $T$ to $T^{'}$
consists of a pair $(\phi,\psi)$ of a L.t.s morphism $\phi : L \rightarrow L^{'}$ and a linear map $\psi: V \rightarrow V^{'}$ satisfying
\begin{align}
    &\label{c1}\phi \circ T = T^{'}\circ \psi,\\
    &\label{c2}\psi\theta(x,y)=\theta(\phi(x),\phi(y))\psi,\quad \forall x,y \in L.
\end{align}
It is called an isomorphism if $\phi$ and $\psi$ are both bijective.  
\end{defi}
The proof of the following result is straightforward then we omit the details.
\begin{pro}
A pair of linear maps $(\phi : L \rightarrow L^{'}, \psi : V \rightarrow V^{'})$ is a morphism of $\mathcal{O}$-operators
from  $T$ to $T^{'}$ if and only if
\begin{equation}
    Gr((\phi,\psi))=\Big\{\Big((x,u),(\phi(x),\psi(u)\Big)\;|\; x\in L,u\in V \Big\} \subset (L \oplus V)\oplus (L^{'}\oplus V^{'}),
\end{equation}
is a subalgebra, where $L \oplus V$ and $L^{'} \oplus V^{'}$ are equipped with semi-direct product structures of L.t.s.
\end{pro}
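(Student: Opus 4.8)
The plan is to recognise that $Gr((\phi,\psi))$ is nothing but the graph of the single linear map $\Phi := \phi\oplus\psi : L\oplus V \to L'\oplus V'$, $\Phi(x,u)=(\phi(x),\psi(u))$, and then to invoke the general principle that the graph of a linear map between two L.t.s is a subalgebra of the product if and only if that map is a homomorphism. Here the ambient object $(L\oplus V)\oplus(L'\oplus V')$ is the direct product of the two semi-direct product L.t.s, with the ternary bracket taken componentwise. So I would reduce the statement to deciding exactly when $\Phi$ preserves the bracket, which is the same style of argument already used in Proposition \ref{graph}.

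Concretely, I would take three generic elements $\big((x_i,u_i),(\phi(x_i),\psi(u_i))\big)$, $i=1,2,3$, of $Gr((\phi,\psi))$ and compute their bracket one component at a time using the semi-direct product formula \eqref{semidirect product}. The $(L\oplus V)$-slot gives $\big([x_1,x_2,x_3],\,\Xi\big)$ with $\Xi=\theta(x_2,x_3)u_1-\theta(x_1,x_3)u_2+D(x_1,x_2)u_3$, while the $(L'\oplus V')$-slot gives $\big([\phi(x_1),\phi(x_2),\phi(x_3)]',\,\Xi'\big)$ with $\Xi'=\theta'(\phi(x_2),\phi(x_3))\psi(u_1)-\theta'(\phi(x_1),\phi(x_3))\psi(u_2)+D'(\phi(x_1),\phi(x_2))\psi(u_3)$. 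Closure of the graph means precisely that this bracket again has the shape $\big((y,v),(\phi(y),\psi(v))\big)$, i.e. that the primed slot equals $\Phi$ applied to the unprimed one. Matching the $L'$-entry forces $\phi([x_1,x_2,x_3])=[\phi(x_1),\phi(x_2),\phi(x_3)]'$, so $\phi$ is a morphism of L.t.s; matching the $V'$-entry forces $\psi(\Xi)=\Xi'$ for all arguments, and specialising this (setting two of the $u_i$ to zero) yields $\psi\theta(x,y)=\theta'(\phi(x),\phi(y))\psi$, which is exactly \eqref{c2}. The remaining $D$-term imposes nothing new, since $D(x,y)=\theta(y,x)-\theta(x,y)$ makes $D$-compatibility a formal consequence of \eqref{c2}. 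The converse is the same computation read in reverse: granting that $\phi$ is a morphism and that \eqref{c2} holds, both slots match and the bracket lands back inside the graph.

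The one point that deserves genuine care is condition \eqref{c1}, $\phi\circ T=T'\circ\psi$. Since neither the elements of $Gr((\phi,\psi))$ nor the componentwise bracket above ever involve $T$ or $T'$, the bare subalgebra computation detects only the homomorphism property of $\Phi$, that is the data ``$\phi$ a morphism together with \eqref{c2}''. To tie this to the full notion of a morphism of $\mathcal{O}$-operators I would bring in the two $\mathcal{O}$-operator graphs: by Proposition \ref{graph} the sets $Gr(T)\subset L\oplus V$ and $Gr(T')\subset L'\oplus V'$ are themselves subalgebras, and \eqref{c1} is equivalent to the inclusion $\Phi(Gr(T))\subseteq Gr(T')$. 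It is this compatibility of $\Phi$ with the ambient $\mathcal{O}$-operator graphs, layered on top of the homomorphism property, that actually pins down the equivalence with ``morphism of $\mathcal{O}$-operators''. I therefore expect the routine bracket bookkeeping to be harmless, and tracking condition \eqref{c1} correctly through the identification to be the real obstacle.
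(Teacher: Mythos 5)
Your bracket computation is correct, and it is presumably what the authors had in mind when they declared the proof ``straightforward'' and omitted it: three elements of $Gr((\phi,\psi))$ bracket, componentwise, to a pair whose second slot equals $(\phi\oplus\psi)$ of the first slot precisely when $\phi$ preserves the ternary bracket and $\psi\theta(x,y)=\theta'(\phi(x),\phi(y))\psi$, with the $D$-compatibility being, as you say, a formal consequence of \eqref{c2}. This settles the ``only if'' direction without difficulty.

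But the worry you raise about condition \eqref{c1} is not a presentational subtlety --- it is a genuine defect in the statement, and your analysis essentially proves it. The set $Gr((\phi,\psi))$ and the componentwise bracket on $(L\oplus V)\oplus(L'\oplus V')$ make no reference to $T$ or $T'$, so the subalgebra condition is exactly equivalent to ``$\phi$ is a L.t.s morphism and \eqref{c2} holds'' and cannot detect $\phi\circ T=T'\circ\psi$. Concretely: take $L'=L$, $V'=V$, $\theta'=\theta$, $T=0$ (always an $\mathcal{O}$-operator), $T'$ any nonzero $\mathcal{O}$-operator on the same pair, and $\phi=\mathrm{Id}_L$, $\psi=\mathrm{Id}_V$; the graph of $(\mathrm{Id},\mathrm{Id})$ is the diagonal, which is a subalgebra, yet $\phi\circ T=0\neq T'\circ\psi$, so $(\mathrm{Id},\mathrm{Id})$ is not a morphism of $\mathcal{O}$-operators from $T$ to $T'$. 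Hence the ``if'' direction fails as printed. Your proposed repair --- adding the requirement $(\phi\oplus\psi)(Gr(T))\subseteq Gr(T')$, which by the computation underlying Proposition \ref{graph} is precisely \eqref{c1} --- is the right one, but be aware that it amends the proposition rather than proves it. Since the paper supplies no proof to compare against, the accurate summary is that your attempt correctly establishes everything that can be established (the equivalence of the subalgebra condition with ``$\phi$ a L.t.s morphism satisfying \eqref{c2}'') and correctly isolates why the statement as written claims strictly more than that.
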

In the rest of the paper, we will be most interested in morphism between $\mathcal{O}$-operators on the
same L.t.s with respect to the same module.
\begin{pro}
 Let $T$ and $T^{'}$ be two $\mathcal{O}$-operators on the \textsf{L.t.sRep} pair
$(L, [\cdot,\cdot, \cdot];\theta)$. If $(\phi, \psi)$ is a morphism (resp. an isomorphism) from  $T$ to $T^{'}$, then $\psi : V \rightarrow V$
is a morphism (resp. an isomorphism) between induced pre-Lie triple system structures.
\end{pro}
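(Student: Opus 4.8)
The plan is to verify directly the single defining identity for a morphism of pre-Lie triple systems, feeding in the two compatibility conditions \eqref{c1} and \eqref{c2} of a morphism of $\mathcal{O}$-operators. Recall that, by the proposition producing the induced structure, the pre-Lie triple products on $V$ coming from $T$ and from $T^{'}$ are
\begin{equation*}
\{u,v,w\}_T=\theta(Tv,Tw)u, \qquad \{u,v,w\}_{T^{'}}=\theta(T^{'}v,T^{'}w)u,
\end{equation*}
for all $u,v,w\in V$. Since $T$ and $T^{'}$ act on the same \textsf{L.t.sRep} pair, we have $\phi:L\to L$ and $\psi:V\to V$, and saying that $\psi$ is a morphism of the induced pre-Lie triple systems means exactly that $\psi(\{u,v,w\}_T)=\{\psi u,\psi v,\psi w\}_{T^{'}}$.

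The heart of the argument is one short chain of substitutions. I would start from $\psi(\{u,v,w\}_T)=\psi\big(\theta(Tv,Tw)u\big)$ and apply \eqref{c2} with $x=Tv$ and $y=Tw$ to push $\psi$ through $\theta$, obtaining $\theta(\phi(Tv),\phi(Tw))\psi(u)$. Then I would invoke \eqref{c1}, namely $\phi\o T=T^{'}\o\psi$, to rewrite $\phi(Tv)=T^{'}(\psi v)$ and $\phi(Tw)=T^{'}(\psi w)$. This gives $\theta(T^{'}(\psi v),T^{'}(\psi w))\psi(u)$, which is precisely $\{\psi u,\psi v,\psi w\}_{T^{'}}$. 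Hence $\psi$ intertwines the two pre-Lie triple products, so it is a morphism.

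For the isomorphism statement I would simply note that if $(\phi,\psi)$ is an isomorphism of $\mathcal{O}$-operators then $\psi$ is bijective by definition, so the morphism of pre-Lie triple systems just established is automatically an isomorphism: its set-theoretic inverse is linear and respects the products because $\psi$ does. The only thing to watch is bookkeeping of the slots: in $\{u,v,w\}=\theta(Tv,Tw)u$ it is the second and third arguments that pass through $T$ while the first is the vector on which $\theta$ acts, so \eqref{c1} must be applied to the arguments of $\theta$ and \eqref{c2} to the acted-on vector, in that order. Beyond this there is no genuine obstacle; the conclusion follows from the two compatibility conditions alone, which is why the authors can reasonably omit the details.
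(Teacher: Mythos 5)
Your proof is correct and follows exactly the same chain of substitutions as the paper's own argument: apply \eqref{c2} to move $\psi$ past $\theta$, then \eqref{c1} to replace $\phi(Tv),\phi(Tw)$ by $T^{'}(\psi v),T^{'}(\psi w)$. The remarks on slot bookkeeping and on the isomorphism case are accurate but add nothing beyond what the paper's two-line computation already establishes.
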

\begin{proof}
For all $u,v,w \in V$, by Eqs \eqref{c1} and \eqref{c2} we have 
\begin{align*}
    &\psi(\{u,v,w\}_{T})=\psi(\theta(T(v),T(w))u)=\theta(\phi(T(v)),\phi(T(w)))\psi(u)\\
    &\quad \quad \quad \quad \quad \;\;\; =\theta(T^{'}(\psi(v)),T^{'}(\psi(w)))\psi(u)=\{\psi(u),\psi(v),\psi(w)\}_{T^{'}}.
\end{align*}
Hence the result follows.
\end{proof}

\section{Cohomology of $\mathcal{O}$-operators on Lie triple systems}
In this section, we define a cohomology of an $\mathcal{O}$-operator $T$ on a L.t.s $L$  as the Lie-Yamaguti cohomology
of a certain L.t.s  with coefficients in a suitable
representation on $L$. This cohomology will be used further, to study deformations of $T$.

It was proved in \cite{Mabrouk} that there is a pre-Lie triple system structure on $V$ as the underlying
structure of an $\mathcal{O}$-operator on a L.t.s. Consequently, there is a
subadjacent L.t.s structure on $V$ given as follows.
\begin{lem}\label{struV}
Let $T: V \to L$ be an $\mathcal{O}$-operator on a \textsf{L.t.sRep} pair
$(L, [\cdot,\cdot, \cdot];\theta)$.  Then $(V,[\cdot,\cdot,\cdot]_T)$ is a L.t.s, where
\begin{align}\label{crochet de V}
   [u,v,w]_T:=D(Tu,Tv)w+\theta(Tv,Tw)u-\theta(Tu,Tw)v.
\end{align}
Moreover, $T$ is a homomorphism of L.t.s i.e. $T([u,v,w]_T)=[Tu,Tv,Tw]$.
\end{lem}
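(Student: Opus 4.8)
The plan is to derive both assertions from the graph characterisation of Proposition \ref{graph}, transporting the L.t.s structure from the semi-direct product to $V$ instead of verifying the defining identities \eqref{lts 1} and \eqref{lts 2} for $[\cdot,\cdot,\cdot]_T$ by hand. The homomorphism statement requires no work: comparing the definition \eqref{crochet de V} of $[u,v,w]_T$ with the right-hand side of the $\mathcal{O}$-operator equation \eqref{O op on lts}, the vector $[u,v,w]_T$ is precisely the argument to which $T$ is applied there, so $T([u,v,w]_T)=[Tu,Tv,Tw]$ is simply a restatement of \eqref{O op on lts}.

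For the L.t.s axioms, the key step is to identify $[\cdot,\cdot,\cdot]_T$ as the pullback of the bracket on the graph. Consider the linear map $\iota:V\to L\oplus V$, $\iota(u)=(Tu,u)$; it is injective with image $Gr(T)$, hence a linear isomorphism of $V$ onto $Gr(T)$. I would then evaluate the semi-direct product bracket \eqref{semidirect product} on three graph elements, taking $x=Tu$, $y=Tv$, $z=Tw$:
\begin{align*}
[(Tu,u),(Tv,v),(Tw,w)]_{L\oplus V}
&=\Big([Tu,Tv,Tw],\;\theta(Tv,Tw)u-\theta(Tu,Tw)v+D(Tu,Tv)w\Big)\\
&=\Big(T([u,v,w]_T),\;[u,v,w]_T\Big)=\iota([u,v,w]_T),
\end{align*}
where the $V$-component is read off from the definition \eqref{crochet de V} and the $L$-component is rewritten using the homomorphism property just established. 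This computation shows exactly that $\iota$ intertwines $[\cdot,\cdot,\cdot]_T$ on $V$ with the restriction of $[\cdot,\cdot,\cdot]_{L\oplus V}$ to $Gr(T)$, i.e. $[u,v,w]_T=\iota^{-1}[\iota u,\iota v,\iota w]_{L\oplus V}$.

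To finish, by Proposition \ref{graph} the graph $Gr(T)$ is a subalgebra of the semi-direct product L.t.s $L\oplus V$, so $(Gr(T),[\cdot,\cdot,\cdot]_{L\oplus V})$ is itself a L.t.s; since the axioms \eqref{lts 1} and \eqref{lts 2} are multilinear identities, they are preserved under conjugation by the linear isomorphism $\iota$, and therefore transfer verbatim to $(V,[\cdot,\cdot,\cdot]_T)$. I expect the bracket computation displayed above to be the only substantive point, all else being formal transport of structure. A direct verification of \eqref{lts 1} and \eqref{lts 2} for $[\cdot,\cdot,\cdot]_T$ is of course possible, but it would require repeated use of the representation identities \eqref{rep lts 1}--\eqref{rep lts 2} together with \eqref{O op on lts}, and is considerably more laborious; the graph argument avoids this entirely.
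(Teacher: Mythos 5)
Your argument is correct, but it is not the route the paper takes. The paper obtains Lemma \ref{struV} as a corollary of the pre-Lie triple system machinery: by the cited result of \cite{Mabrouk}, an $\mathcal{O}$-operator induces a pre-Lie triple system $\{u,v,w\}=\theta(Tv,Tw)u$ on $V$, and one checks that the associated commutator bracket $[\cdot,\cdot,\cdot]_C$ from \eqref{comm} is exactly $[\cdot,\cdot,\cdot]_T$ (indeed $\{u,v,w\}^{*}=D(Tu,Tv)w$ and $\{u,v,w\}-\{v,u,w\}=\theta(Tv,Tw)u-\theta(Tu,Tw)v$), whence the L.t.s axioms follow from the general fact that the commutator of a pre-Lie triple system is a L.t.s. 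Your graph-transport argument is self-contained within the paper: it uses only Proposition \ref{graph}, the semi-direct product \eqref{semidirect product}, and the observation that the linear isomorphism $\iota(u)=(Tu,u)$ of $V$ onto $Gr(T)$ intertwines $[\cdot,\cdot,\cdot]_T$ with the restricted bracket, so the multilinear identities \eqref{lts 1} and \eqref{lts 2} (including the alternating condition in the first two slots) transfer verbatim; the homomorphism property is, as you say, a restatement of \eqref{O op on lts} and is also needed to identify the $L$-component of the bracket of graph elements with $T([u,v,w]_T)$. What the paper's route buys is the finer statement that $[\cdot,\cdot,\cdot]_T$ is the commutator of an underlying pre-Lie triple system (used later for the morphism results); what yours buys is independence from the external reference and from the unproved-here claim that \eqref{cond1}--\eqref{cond2} imply the L.t.s axioms for $[\cdot,\cdot,\cdot]_C$. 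Your approach is also consonant with the paper's own later technique of realizing induced structures inside $L\oplus V$ via the Nijenhuis operator $\overline{T}$. Both are valid; yours is arguably the cleaner proof of the lemma as stated.
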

Furthermore, there is a representation of the above L.t.s  $(V, [\c,\c,\c]_T )$ on $L$:
\begin{pro}
Let $T$ be an $\mathcal{O}$-operator on a \textsf{L.t.sRep} pair
$(L, [\cdot,\cdot, \cdot];\theta)$. Define $\theta_T:\otimes^{2}V\rightarrow gl(L)$ by
\begin{equation}\label{indurep}
\theta_T(u,v)x=[x,Tu,Tv]+T\Big(\theta(x,Tv)u-D(x,Tu)v \Big),\ \forall x \in L,\ u,v \in V.
\end{equation}
Then $(L,\theta_T)$ is a representation of the L.t.s $( V, [\cdot,\cdot, \cdot]_T)$ on $L$.
\end{pro}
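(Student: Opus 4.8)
The claim is that $\theta_T$ of \eqref{indurep} satisfies the two representation axioms \eqref{rep lts 1} and \eqref{rep lts 2} with respect to the bracket $[\cdot,\cdot,\cdot]_T$ of \eqref{crochet de V}. Rather than attack those two identities head on, the plan is to exploit the semi-direct product characterisation of representations. By \eqref{semidirect product}, $\theta_T$ is a representation of $(V,[\cdot,\cdot,\cdot]_T)$ on $L$ if and only if the space $V\oplus L$ (algebra part $V$, module part $L$) is a L.t.s under the bracket
\begin{align*}
[u+x,v+y,w+z]=[u,v,w]_T+\theta_T(v,w)x-\theta_T(u,w)y+D_T(u,v)z,
\end{align*}
where $D_T(u,v)=\theta_T(v,u)-\theta_T(u,v)$. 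So it suffices to produce such a L.t.s, and I claim it already exists, realised on $L\oplus V$.

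The key input is the Proposition relating $\mathcal{O}$-operators to Nijenhuis operators: since $T$ is an $\mathcal{O}$-operator, $\overline{T}$ is a Nijenhuis operator on the semi-direct product L.t.s $L\oplus V$, and hence by the Lemma on Nijenhuis operators $(L\oplus V,[\cdot,\cdot,\cdot]_{\overline{T}})$ is again a L.t.s, with bracket given by \eqref{nijstructure}. First I would note that $\overline{T}^{2}=0$, so the $\overline{T}^{2}$-term of \eqref{nijstructure} drops and the formula shortens. Then I would compute $[\cdot,\cdot,\cdot]_{\overline{T}}$ explicitly on $(x,u),(y,v),(z,w)$, feeding in $\overline{T}(x,u)=(Tu,0)$ and expanding each resulting term with the semi-direct bracket \eqref{semidirect product}. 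The outcome I expect is that the $V$-component of $[(x,u),(y,v),(z,w)]_{\overline{T}}$ depends only on $u,v,w$ and equals $[u,v,w]_T$, while the $L$-component reproduces exactly $\theta_T(v,w)x-\theta_T(u,w)y+D_T(u,v)z$ with $\theta_T$ as in \eqref{indurep}. Taking $u=v=w=0$ shows that $L\oplus 0$ is an abelian ideal and $(x,u)\mapsto u$ is a surjective homomorphism onto $(V,[\cdot,\cdot,\cdot]_T)$; together these identify $(L\oplus V,[\cdot,\cdot,\cdot]_{\overline{T}})$, via $(x,u)\mapsto u+x$, with the semi-direct product of $(V,[\cdot,\cdot,\cdot]_T)$ and $L$ along $\theta_T$. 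Since the left-hand side is a L.t.s, \eqref{semidirect product} gives at once that $\theta_T$ is a representation.

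The main obstacle is the single identification of the $L$-component of $[\cdot,\cdot,\cdot]_{\overline{T}}$ with the action terms $\theta_T(v,w)x-\theta_T(u,w)y+D_T(u,v)z$. After expansion this reduces to an equality between two sums of eight summands of the shape $\theta(\cdot,\cdot)(\cdot)$ for the $T(\cdots)$-valued part, which I would settle by repeatedly rewriting $D(a,b)=\theta(b,a)-\theta(a,b)$ and by using the antisymmetry of $[\cdot,\cdot,\cdot]$ in its first two slots (so that $[Ta,x,Tb]=-[x,Ta,Tb]$); no further use of the $\mathcal{O}$-operator identity \eqref{O op on lts} is needed here, as it has already been consumed in guaranteeing that $\overline{T}$ is Nijenhuis.

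As a self-contained alternative, one can instead verify \eqref{rep lts 1} and \eqref{rep lts 2} by brute force: substitute \eqref{indurep} and the resulting $D_T$, expand using the L.t.s axioms \eqref{lts 1}--\eqref{lts 2} for $L$, the representation axioms \eqref{rep lts 1}--\eqref{rep lts 2} for $\theta$, and the $\mathcal{O}$-operator identity \eqref{O op on lts}, and check that the $L$-valued and the $T(\cdot)$-valued contributions cancel separately. This route is considerably longer, and it is precisely the computation that the Nijenhuis argument above packages cleanly; for that reason I would present the Nijenhuis/semi-direct proof and relegate the direct expansion to a remark.
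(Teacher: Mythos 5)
Your proposal is correct and follows essentially the same route as the paper: the authors also observe that $\overline{T}$ is a square-zero Nijenhuis operator on $L\ltimes_{\theta}V$, expand the deformed bracket $[\cdot,\cdot,\cdot]_{\overline{T}}$ to identify its $V$-component with $[\cdot,\cdot,\cdot]_T$ and its $L$-component with $\theta_T(v,w)x-\theta_T(u,w)y+D_T(u,v)z$ (using the cyclic identity \eqref{lts 1} to recognize $D_T$), and then invoke the semi-direct product characterisation of representations. The brute-force verification you relegate to a remark is exactly the ``tedious computation'' the paper mentions and likewise declines to carry out.
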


\begin{proof}
One can show it directly by a tedious computation. Here we take a different approach
using Nijenhuis operators on L.t.s. Let $T$ be an $\mathcal{O}$-operator on a \textsf{L.t.sRep} pair
$(L, [\cdot,\cdot, \cdot];\theta)$. We define $\overline{T} : L\oplus V \rightarrow L\oplus V$ by
\begin{equation*}
  \overline{T}(x+u)=T(u),\quad \forall x\in L,\; u\in V.  
\end{equation*}
Then $\overline{T}$ is a Nijenhuis operator on the semidirect product L.t.s $L\ltimes_{\theta}V$ and $\overline{T}\circ \overline{T} = 0$.
Then by Eq. \eqref{nijstructure}, there is a L.t.s structure $[\c,\c,\c]_{\overline{T}}$ on the vector space $L\oplus V$ given by
\begin{align*}
    &\quad \;[x+u,y+v,z+v]_{\overline{T}}\\
    &=[\overline{T}(x+u),\overline{T}(y+v),z+w]_{L\oplus V}+[\overline{T}(x+u),y+v,\overline{T}(z+w)]_{L\oplus V}+[x+u,\overline{T}(y+v),\overline{T}(z+w)]_{L\oplus V}\\
    &\quad \;-\overline{T}\Big([\overline{T}(x+u),y+v,z+w]_{L\oplus V}+[x+u,\overline{T}(y+v),z+w]_{L\oplus V}+[x+u,y+v,\overline{T}(z+w)]_{L\oplus V}\Big)\\
    &=[Tu,Tv,z+w]_{L\oplus V}+[Tu,y+v,Tw]_{L \oplus V}+[x+u,Tv,Tw]_{L \oplus V}\\
    &\quad \;-\overline{T}\Big([Tu,y+v,z+w]_{L\oplus V}+[x+u,Tv,z+w]_{L\oplus V}+[x+u,y+v,Tw]_{L\oplus V}\Big)\\
    &=[Tu,Tv,z] +D(Tu,Tv)w+[Tu,y,Tw]-\theta(Tu,Tw)v+[x,Tv,Tw]+\theta(Tv,Tw)u\\
    &\quad \;-\overline{T}\Big([Tu,y,z]-\theta(Tu,z)v+D(Tu,y)w+[x,Tv,z]+\theta(Tv,z)u+D(x,Tv)w\\
    &\quad \;+[x,y,Tw]+\theta(y,Tw)u-\theta(x,Tw)v\Big)\\
    &=[Tu,Tv,z] +D(Tu,Tv)w+[Tu,y,Tw]-\theta(Tu,Tw)v+[x,Tv,Tw]+\theta(Tv,Tw)u\\
    &\quad \;-T\Big(-\theta(Tu,z)v+D(Tu,y)w+\theta(Tv,z)u+D(x,Tv)w+\theta(y,Tw)u-\theta(x,Tw)v\Big),
\end{align*}
or by Eq. \eqref{lts 1}, we have
\begin{align*}
    &[Tu,Tv,z]=-[Tv,z,Tu]-[z,Tu,Tv]= [z,Tv,Tu]-[z,Tu,Tv].
\end{align*}
Then we obtain that 
\begin{align*}
    &D_T(u,v)z=\theta_T(v,u)z-\theta_T(u,v)z\\
    &\quad \quad \quad\;\; \;\;\;=[z,Tv,Tu]+T\Big(\theta(z,Tu)v-D(z,Tv)u\Big)\\
    &\quad \quad \quad\; \;\;\;\;-[z,Tu,Tv]-T\Big(\theta(z,Tv)u-D(z,Tu)v\Big)\\
    &\quad \quad \quad \;\;\;\;\;=[Tu,Tv,z]+T\Big(\theta(z,Tu)v-(\theta(Tv,z)u-\theta(z,Tv)u)\Big)\\
    &\quad \quad \quad \;\;\;-T\Big(\theta(z,Tv)u-(\theta(Tu,z)v-\theta(z,Tu)v)\Big)\\
    &\quad \quad \quad \;\;\;=[Tu,Tv,z]-T\Big(-\theta(Tu,z)v+\theta(Tv,z)u\Big).
    \end{align*}
Finally, we have 
\begin{equation}
   [x+u,y+v,z+v]_{\overline{T}}=[u,v,w]_T+\theta_T(v,w)x-\theta_T(u,w)y+D_T(u,v)z. 
\end{equation}
Since a semidirect product of a L.t.s is equivalent to a representation of a L.t.s,
 we deduce that $(L,\theta_T )$ is a representation of the L.t.s $(V, [\c,\c,\c]_T )$.
\end{proof}

Let
$(V, [\cdot,\cdot, \cdot]_T;\theta_T)$ be a \textsf{L.t.sRep} pair. For each $n \geqslant 0$, we denote by $C^{2n+1}(V,L)$, the vector space of $(2n+1)$-cochains of $V$ with coefficients in $L$. An element  $f\in C^{2n+1}(V,L)$ is a multilinear map  on $\times^{2n+1}  V$ into $L$ satisfying

$$f(v_1,v_2,\cdots,v_{2n-2},v,v,u)=0,$$
and
$$f(v_1, v_2, \cdots, v_{2n-2}, v, u, w) + f(v_1, v_2, \cdots, v_{2n-2}, u, w, v) + f(v_1, v_2,\cdots, v_{2n-2}, w, v, u) = 0.$$

Let $\delta^{2n-1}_T: C^{2n-1}(V,L) \to C^{2n+1}(V,L)$ be the corresponding coboundary operator on the L.t.s $(V,[\cdot,\cdot,\cdot]_T)$ with coefficients in the representation $(L,\theta_T)$.  More precisely, $\delta_T: C^{2n-1}(V,L) \to C^{2n+1}(V,L)$ is given by
\begin{align}
  &  \delta_T f(v_1,v_2, \cdots , v_{2n+1})\nonumber\\
    =&\theta_T(v_{2n},v_{2n+1})f(v_1,v_2, \cdots , v_{2n-1})- \theta_T(v_{2n-1},v_{2n+1})f(v_1,v_2, \cdots , v_{2n-2},v_{2n}) \nonumber \\
    &+ \sum_{k=1}^n (-1)^{n+k}D_T(v_{2k-1},v_{2k})f(v_1,v_2, \cdots , \widehat{v}_{2k-1},\widehat{v}_{2k}, \cdots , v_{2n+1}) \nonumber \\
    &+ \sum_{k=1}^n \sum_{j=2k+1}^{2n+1}  (-1)^{n+k+1} f(v_1,v_2, \cdots,  \widehat{v}_{2k-1},\widehat{v}_{2k}, \cdots, [v_{2k-1},v_{2k},v_j]_T,\cdots , v_{2n+1}), 
\end{align}

for all $f \in C^{2n-1}(V,L),\ n\geqslant 1$, where $\;\widehat{}\;$ denotes omission. With this coboundary operator  the Yamaguti cochain forms a complex
$$ C^{1}(V,L)\overset{\delta_T^{1}}{\longrightarrow} C^{3}(V,L)\overset{\delta_T^{3}}{\longrightarrow}  C^{5}(V,L) \longrightarrow \cdots,$$
and $\delta_T^{2n+1}\circ \delta_T^{2n-1}=0\;for\;n=1,2,\cdots$.

\begin{defi}
Let
$(V, [\cdot,\cdot, \cdot]_T;\theta_T)$ be a \textsf{L.t.sRep} pair.
\begin{enumerate}
    \item 
A linear map $f\in C^{1}(V, L )$
is called $1$-cocycle if
\begin{align}
    &D_T(v_1,v_2)f(v_3)-\theta_T(v_1,v_3)f(v_2)+\theta_T(v_2,v_3)f(v_1)-f([v_1,v_2,v_3]_T)\nonumber\\
   = &[Tv_1,Tv_2,f(v_3)]+[Tv_1,f(v_2),Tv_3]+[f(v_1),Tv_2,Tv_3]\nonumber\\
    &-T\Big(-D(f(v_2),Tv_1)v_3+D(f(v_1),Tv_2)v_3+\theta(Tv_2,f(v_3))v_1+\theta(f(v_2),Tv_3)v_1\nonumber\\
    &-\theta(Tv_1,f(v_3))v_2-\theta(f(v_1),Tv_3)v_2\Big)\nonumber\\
    &-f\Big(D(Tv_1,Tv_2)v_3+\theta(Tv_2,Tv_3)v_1-\theta(Tv_1,Tv_3)v_2\Big),
\end{align}
and a map $f\in C^{3}(V,L)$ is called a $3$-coboundary if there exists a map $g \in C^{1}(V, L )$ such that $f=\delta_T^{1}g$.
\item A map $f\in C^{3}(V,L)$ is called a $3$-cocycle if $\forall v_1,v_2,v_3,u_1,u_2,u_3\in V$,
\begin{align}
    &f(v_1,v_1,v_2)=0,\\
    &f(v_1,v_2,v_3)+f(v_2,v_3,v_1)+f(v_3,v_1,v_2)=0,\\
    &f(v_1,v_2,[u_1,u_2,u_3]_T)+D_T(v_1,v_2)f(u_1,u_2,u_3)\nonumber\\
    =&f([v_1,v_2,u_1]_T,u_2,u_3)+f(u_1,[v_1,v_2,u_2]_T,u_3)+f(u_1,u_2,[v_1,v_2,u_3]_T)\nonumber\\
    &+\theta_T(u_2,u_3)f(v_1,v_2,u_1)-\theta_T(u_1,u_3)f(v_1,v_2,u_2)+D_T(u_1,u_2)f(v_1,v_2,u_3).
\end{align}
\end{enumerate}
\end{defi}

 For any $\mathfrak{X}\in L \wedge L$, we  define $\partial_T(\mathfrak{X}):V \rightarrow L$ by 
 \begin{equation*}
    \partial_T(\mathfrak{X})v=TD(\mathfrak{X})v-[\mathfrak{X},Tv] ,\;\forall v\in V.
 \end{equation*}
 \begin{pro}
  Let $T$ be an $\mathcal{O}$-operator on a \textsf{L.t.sRep} pair
$(L, [\cdot,\cdot, \cdot];\theta)$. Then $\partial_T(\mathfrak{X})$ is a $1$-cocycle on the L.t.s $(V,[\c,\c,\c]_T)$ with coefficients in $(L,\theta_T)$.
 \end{pro}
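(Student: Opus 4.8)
The goal is to verify that $f:=\partial_T(\mathfrak{X})$ satisfies the Yamaguti $1$-cocycle condition $\delta_T^1 f=0$, that is, for all $v_1,v_2,v_3\in V$,
\[
D_T(v_1,v_2)f(v_3)-\theta_T(v_1,v_3)f(v_2)+\theta_T(v_2,v_3)f(v_1)-f([v_1,v_2,v_3]_T)=0 .
\]
Writing $\mathfrak{X}=x\wedge y$, I first record the shape of $f$: since $\partial_T(\mathfrak{X})v=T(D(x,y)v)-[x,y,Tv]$, we have $\partial_T(\mathfrak{X})=TB-AT$, where $A:=[x,y,\cdot]:L\to L$ is the inner map and $B:=D(x,y):V\to V$ acts through the representation $\theta$. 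The ingredients I will feed into the computation are the explicit induced data: $[\cdot,\cdot,\cdot]_T$ from \eqref{crochet de V}, $\theta_T$ from \eqref{indurep}, the formula for $D_T$ derived in the proof of the preceding proposition, and the fact (Lemma~\ref{struV}) that $T$ is a homomorphism, $T[u,v,w]_T=[Tu,Tv,Tw]$.

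The organizing principle I will use is that $\partial_T(\mathfrak{X})$ is (minus) the infinitesimal variation of $T$ under the inner-automorphism flow generated by $\mathfrak{X}$. Concretely, $A$ is a derivation of $(L,[\cdot,\cdot,\cdot])$ by \eqref{lts 2}, and the pair $(A,B)$ is compatible with the representation, $B\theta(a,b)-\theta(a,b)B=\theta(Aa,b)+\theta(a,Ab)$, which is exactly \eqref{rep lts 2} rewritten. Hence $\phi_t:=e^{tA}\oplus e^{tB}$ is a (formal) one-parameter group of automorphisms of the semidirect product L.t.s $L\oplus V$, and since $Gr(T)$ is a subalgebra by Proposition~\ref{graph}, its image $\phi_t(Gr(T))=Gr(T_t)$ with $T_t:=e^{tA}\,T\,e^{-tB}$ is again a subalgebra; thus each $T_t$ is an $\mathcal{O}$-operator and $\frac{d}{dt}\big|_{0}T_t=AT-TB=-\partial_T(\mathfrak{X})$. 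This explains why the statement must hold: the $1$-cocycle condition is precisely the first-order term of the $\mathcal{O}$-operator equation \eqref{O op on lts} along a curve of $\mathcal{O}$-operators, so it is automatically satisfied by the velocity $-\partial_T(\mathfrak{X})$.

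To turn this into a proof I will substitute $f=\partial_T(\mathfrak{X})$ directly into the displayed cocycle identity (equivalently, differentiate \eqref{O op on lts} for $T_t$ at $t=0$) and expand every occurrence of $\theta_T$, $D_T$ and $[\cdot,\cdot,\cdot]_T$ via \eqref{indurep}, the $D_T$-formula and \eqref{crochet de V}. The resulting expression splits into two kinds of terms: iterated brackets living in $L$ (built from $x,y,Tv_i$ and the $f(v_j)$), and terms of the form $T(\cdots)$ coming from the $T$-tails of $\theta_T,D_T$ and from $f$ itself. I expect the pure-$L$ part to vanish using the Jacobi-type identity \eqref{lts 2} (that $[x,y,\cdot]$ is a derivation) together with the ternary skew-symmetry \eqref{lts 1}, and the $T(\cdots)$ part to vanish using the two representation axioms \eqref{rep lts 1}--\eqref{rep lts 2} for $\theta$, with the $\mathcal{O}$-operator equation \eqref{O op on lts} used to rewrite the genuinely nonlinear brackets $[Tv_i,Tv_j,f(v_k)]$ and $[Tv_1,Tv_2,Tv_3]$ that occur. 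The main obstacle is purely the bookkeeping in this last step: there are many terms, and one must track the antisymmetry of the first two bracket slots carefully when matching, for instance, $[Tv_1,f(v_2),Tv_3]$ against $-[f(v_2),Tv_1,Tv_3]$. The conceptual curve picture above is what guarantees that all of these cancellations do occur, so that the final step is a verification rather than a search.
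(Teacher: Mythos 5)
Your proposal is correct, but it really contains two arguments, and it is worth separating them. The computation you announce in your last paragraph — substitute $f=\partial_T(\mathfrak{X})$ into the cocycle identity, expand $\theta_T$, $D_T$ and $[\cdot,\cdot,\cdot]_T$, and cancel the pure-bracket terms via \eqref{lts 1}--\eqref{lts 2} and the $T(\cdots)$ terms via \eqref{O op on lts} and \eqref{rep lts 2} — is exactly what the paper does (it uses \eqref{rep lts 2} but never needs \eqref{rep lts 1}); as written you only assert that the cancellations occur, so on its own this part is a plan rather than a proof. The genuinely different contribution is your ``motivating'' flow argument, which is in fact a complete proof and arguably the better one, so you should promote it to the main argument rather than treat it as heuristic. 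To make it airtight: observe that $A\oplus B$ with $A=[\mathfrak{X},\cdot]$, $B=D(\mathfrak{X})$ is precisely the inner derivation $\mathcal{L}\bigl((x,0)\wedge(y,0)\bigr)$ of the semidirect product \eqref{semidirect product}, so it is a derivation by \eqref{lts 2} applied in $L\ltimes_\theta V$ (your identity $B\theta(a,b)-\theta(a,b)B=\theta(Aa,b)+\theta(a,Ab)$ is indeed \eqref{rep lts 2}); work over $\mathbb{F}[t]/(t^2)$ so that $\mathrm{id}+t(A\oplus B)$ is an automorphism without worrying about convergence of $e^{tA}$; conclude from Proposition \ref{graph} that $T_t=(\mathrm{id}+tA)\,T\,(\mathrm{id}+tB)^{-1}=T-t\,\partial_T(\mathfrak{X})$ satisfies \eqref{O op on lts} modulo $t^2$; and finally check that the coefficient of $t$ in \eqref{O op on lts} for $T+tf$ is literally the displayed $1$-cocycle condition (the paper records this identification itself as Eq.~\eqref{eq:defo} in Section 5, using $[Tv_1,f(v_2),Tv_3]=-[f(v_2),Tv_1,Tv_3]$ to match the $\theta_T$-terms). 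Since the cocycle condition is linear in $f$ and in $\mathfrak{X}$, the sign and the reduction to a single wedge $x\wedge y$ are harmless. What this route buys is a replacement of the paper's page-and-a-half of term-tracking by three structural facts already established (graph characterization, inner derivations, semidirect product); what it costs is the small amount of formal-deformation scaffolding, which the paper in any case sets up in Section 5.
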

 \begin{proof}
 For any $v_1,v_2,v_3\in V$, we have
 \begin{align*}
     &(\delta^{1}_T\circ \partial_T(\mathfrak{X}))(v_1,v_2,v_3)\\
     =&[Tv_1,Tv_2,TD(\mathfrak{X})v_3-[\mathfrak{X},Tv_3]]+[Tv_1,TD(\mathfrak{X})v_2-[\mathfrak{X},Tv_2],Tv_3]\\
     &+[TD(\mathfrak{X})v_1-[\mathfrak{X},Tv_1],Tv_2,Tv_3]-TD(\mathfrak{X})(D(Tv_1,Tv_2)v_3+\theta(Tv_2,Tv_3)v_1\\
     &-\theta(Tv_1,Tv_3)v_2)+[\mathfrak{X},T(D(Tv_1,Tv_2)v_3+\theta(Tv_2,Tv_3)v_1
     -\theta(Tv_1,Tv_3)v_2)]\\
     &-T\Big(D(TD(\mathfrak{X})v_1-[\mathfrak{X},Tv_1],Tv_2)v_3-D(TD(\mathfrak{X})v_2-[\mathfrak{X},Tv_2],Tv_1)v_3\Big)\\
     &-T\Big(\theta(Tv_2,TD(\mathfrak{X})v_3-[\mathfrak{X},Tv_3])v_1+\theta(TD(\mathfrak{X})v_2-[\mathfrak{X},Tv_2],Tv_3)v_1\Big)\\
     &-T\Big(-\theta(Tv_1,TD(\mathfrak{X})v_3-[\mathfrak{X},Tv_3])v_2-\theta(TD(\mathfrak{X})v_1-[\mathfrak{X},Tv_1],Tv_3)v_2\Big)\\
     \overset{\eqref{O op on lts}}{=}&-[[\mathfrak{X},Tv_1],Tv_2,Tv_3]-[Tv_1,[\mathfrak{X},Tv2],Tv_3]-[Tv_1,Tv_2,[\mathfrak{X},Tv_3]]\\
     &+[TD(\mathfrak{X})v_1,Tv_2,Tv_3]+[Tv_1,TD(\mathfrak{X})v_2,Tv_3]+[Tv_1,Tv_2,TD(\mathfrak{X})v_3]\\
     &-TD(\mathfrak{X})(D(Tv_1,Tv_2)v_3+\theta(Tv_2,Tv_3)v_1-\theta(Tv_1,Tv_3)v_2)+[\mathfrak{X},[Tv_1,Tv_2,Tv_3]]\\
     &-T\Big(D(TD(\mathfrak{X})v_1-[\mathfrak{X},Tv_1],Tv_2)v_3-D(TD(\mathfrak{X})v_2-[\mathfrak{X},Tv_2],Tv_1)v_3\Big)\\
     &-T\Big(\theta(Tv_2,TD(\mathfrak{X})v_3-[\mathfrak{X},Tv_3])v_1+\theta(TD(\mathfrak{X})v_2-[\mathfrak{X},Tv_2],Tv_3)v_1\Big)\\
     &-T\Big(-\theta(Tv_1,TD(\mathfrak{X})v_3-[\mathfrak{X},Tv_3])v_2-\theta(TD(\mathfrak{X})v_1-[\mathfrak{X},Tv_1],Tv_3)v_2\Big)\\
   \overset{\eqref{lts 2}}{=}  &[TD(\mathfrak{X})v_1,Tv_2,Tv_3]+[Tv_1,TD(\mathfrak{X})v_2,Tv_3]+[Tv_1,Tv_2,TD(\mathfrak{X})v_3]\\
     &-TD(\mathfrak{X})(D(Tv_1,Tv_2)v_3+\theta(Tv_2,Tv_3)v_1-\theta(Tv_1,Tv_3)v_2)\\
     &-T\Big(D(TD(\mathfrak{X})v_1,Tv_2)v_3-D([\mathfrak{X},Tv_1],Tv_1)v_3-D(TD(\mathfrak{X})v_2,Tv_1)v_3+D([\mathfrak{X},Tv_2],Tv_1)v_3\Big)\\
     &-T\Big(\theta(Tv_2,TD(\mathfrak{X})v_3)v_1-\theta(Tv_2,[\mathfrak{X},Tv_3])v_1+\theta(TD(\mathfrak{X})v_2,Tv_3)v_1-\theta([\mathfrak{X},Tv_2],Tv_3)v_1\Big)\\
     &-T\Big(-\theta(Tv_1,TD(\mathfrak{X})v_3)v_2+\theta(Tv_1,[\mathfrak{X},Tv_3])v_2-\theta(TD(\mathfrak{X})v_1,Tv_3)v_2+\theta([\mathfrak{X},Tv_1],Tv_3)v_2\Big)\\
    \overset{\eqref{O op on lts}}{=} &T\Big(D(Tv_1,Tv_2)D(\mathfrak{X})v_3+\theta(Tv_2,TD(\mathfrak{X})v_3)v_1-\theta(Tv_1,TD(\mathfrak{X})v_3)v_2\Big)\\
     &+T\Big(D(TD(\mathfrak{X})v_1,Tv_2)v_3+\theta(Tv_2,Tv_3)D(\mathfrak{X})v_1-\theta(TD(\mathfrak{X})v_1,Tv_3)v_2\Big)\\
     &+T\Big(D(Tv_1,TD(\mathfrak{X})v_2)v_3+\theta(TD(\mathfrak{X})v_2,Tv_3)v_1-\theta(Tv_1,Tv_3)D(\mathfrak{X})v_2\Big)\\
     &-TD(\mathfrak{X})(D(Tv_1,Tv_2)v_3+\theta(Tv_2,Tv_3)v_1-\theta(Tv_1,Tv_3)v_2)\\
     &-T\Big(D(TD(\mathfrak{X})v_1,Tv_2)v_3-D([\mathfrak{X},Tv_1],Tv_1)v_3-D(TD(\mathfrak{X})v_2,Tv_1)v_3+D([\mathfrak{X},Tv_2],Tv_1)v_3\Big)\\
     &-T\Big(\theta(Tv_2,TD(\mathfrak{X})v_3)v_1-\theta(Tv_2,[\mathfrak{X},Tv_3])v_1+\theta(TD(\mathfrak{X})v_2,Tv_3)v_1-\theta([\mathfrak{X},Tv_2],Tv_3)v_1\Big)\\
     &-T\Big(-\theta(Tv_1,TD(\mathfrak{X})v_3)v_2+\theta(Tv_1,[\mathfrak{X},Tv_3])v_2-\theta(TD(\mathfrak{X})v_1,Tv_3)v_2+\theta([\mathfrak{X},Tv_1],Tv_3)v_2\Big)\\
     =&T\Big(D(Tv_1,Tv_2)D(\mathfrak{X})v_3+\theta(Tv_2,Tv_3)D(\mathfrak{X})v_1-\theta(Tv_1,Tv_3)D(\mathfrak{X})v_2\Big)\\
     &-TD(\mathfrak{X})(D(Tv_1,Tv_2)v_3+\theta(Tv_2,Tv_3)v_1-\theta(Tv_1,Tv_3)v_2)\\
     &-T\Big(D([\mathfrak{X},Tv_2],Tv_1)v_3-D([\mathfrak{X},Tv_1],Tv_2)v_3\Big)\\
     &+T\Big(\theta(Tv_2,[\mathfrak{X},Tv_3])v_1+\theta([\mathfrak{X},Tv_2],Tv_3)v_1\Big)\\
     &-T\Big(\theta(Tv_1,[\mathfrak{X},Tv_3])v_2+\theta([\mathfrak{X},Tv_1],Tv_3)v_2\Big)\\
     =&T\Big(\theta(Tv_2,Tv_1)D(\mathfrak{X})v_3-\theta(Tv_1,Tv_2)D(\mathfrak{X})v_3+\theta(Tv_2,Tv_3)D(\mathfrak{X})v_1-\theta(Tv_1,Tv_3)D(\mathfrak{X})v_2\Big)\\
     &-TD(\mathfrak{X})\theta(Tv_2,Tv_1)v_3+TD(\mathfrak{X})\theta(Tv_1,Tv_2)v_3-TD(\mathfrak{X})\theta(Tv_2,Tv_3)v_1+TD(\mathfrak{X})\theta(Tv_1,Tv_3)v_2\\
     &-T\Big(\theta(Tv_1,[\mathfrak{X},Tv_2])v_3-\theta([\mathfrak{X},Tv_2],Tv_1)v_3-\theta(Tv_2,[\mathfrak{X},Tv_1])v_3+\theta([\mathfrak{X},Tv_1],Tv_2)v_3\Big)\\
     &+T\Big(\theta(Tv_2,[\mathfrak{X},Tv_3])v_1+\theta([\mathfrak{X},Tv_2],Tv_3)v_1\Big)\\
     &-T\Big(\theta(Tv_1,[\mathfrak{X},Tv_3])v_2+\theta([\mathfrak{X},Tv_1],Tv_3)v_2\Big)\\
     \overset{\eqref{rep lts 2}}{=}&T\Big(\theta([\mathfrak{X},Tv_1],Tv_2)v_3+\theta(Tv_1,[\mathfrak{X},Tv_2])v_3\Big)\\
     &-T\Big(\theta([\mathfrak{X},Tv_2],Tv_1)v_3+\theta(Tv_2,[\mathfrak{X},Tv_1])v_3\Big)\\
     &-T\Big(\theta([\mathfrak{X},Tv_2],Tv_3)v_1+\theta(Tv_2,[\mathfrak{X},Tv_3])v_1\Big)\\
     &+T\Big(\theta([\mathfrak{X},Tv_1],Tv_3)v_2+\theta(Tv_1,[\mathfrak{X},Tv_3])v_2\Big)\\
     &-T\Big(\theta(Tv_1,[\mathfrak{X},Tv_2])v_3-\theta([\mathfrak{X},Tv_2],Tv_1)v_3-\theta(Tv_2,[\mathfrak{X},Tv_1])v_3+\theta([\mathfrak{X},Tv_1],Tv_2)v_3\Big)\\
     &+T\Big(\theta(Tv_2,[\mathfrak{X},Tv_3])v_1+\theta([\mathfrak{X},Tv_2],Tv_3)v_1\Big)\\
     &-T\Big(\theta(Tv_1,[\mathfrak{X},Tv_3])v_2+\theta([\mathfrak{X},Tv_1],Tv_3)v_2\Big)\\
     =&0.
 \end{align*}
 Thus, we deduce that $\delta^{1}_T\circ \partial_T(\mathfrak{X})=0$. \end{proof}
 Define the set of $(2n-1)$-cochains by
 \begin{equation}
     C_{T}^{2n-1}(V,L)=\begin{cases}
     C^{2n-1}(V,L),\quad n\geq 1,\\
     L\wedge L, \quad \quad \quad\;\; \;n=0.
     \end{cases}
 \end{equation}
 Define $d_T: C_{T}^{2n-1}(V,L) \to C_{T}^{2n+1}(V,L)$ by
 \begin{equation}
     d_T=\begin{cases}
     \delta_T,\quad n\geq 1,\\
     \partial_T,\;\;\; n=0.
     \end{cases}
 \end{equation}
 We now give the cohomology of $\mathcal{O}$-operators on L.t.s.
 \begin{defi}
 Let $T$ be an $\mathcal{O}$-operator on a \textsf{L.t.sRep} pair
$(L, [\cdot,\cdot, \cdot];\theta)$.
 Denote the set of cocycles by ${Z}^{\bullet}(V,L)$, the set of coboundaries by ${B}^{\bullet}(V,L)$
 and the cohomology group by
 $${H}^{\bullet}(V,L)={Z}^{\bullet}(V,L) / {B}^{\bullet}(V,L).$$
 The cohomology group which will be taken to be the  cohomology group for the $\mathcal{O}$-operator $T$.
 \end{defi}  
 
 We need the following statement to prove the functoriality of our cohomology theory.

\begin{pro}\label{relrep}
Let $T$ and $T^{'}$ be two $\mathcal{O}$-operators on a \textsf{L.t.sRep} pair
$(L, [\cdot,\cdot, \cdot];\theta)$ and $(\phi,\psi)$ be a morphism from $T$ to $T^{'}$. Then \\
$\textbf{(i)}$ $\psi$ is a L.t.s morphism from the descendent L.t.s $(V, [\c,\c, \c]_T)$ of $T$
to the descendent L.t.s $(V, [\c,\c, \c]_{T^{'}})$ of $T^{'}$.\\
$\textbf{(ii)}$ The induced representation $(L,\theta_T)$ of the L.t.s $(V,[\c,\c,\c]_T)$ and the induced representation $(L,\theta_{T^{'}})$ satisfy the following relation
\begin{equation}
    \phi \circ \theta_T(u,v)=\theta_{T^{'}}(\psi(u),\psi(v))\circ \phi,\quad \forall u,v\in V.
\end{equation}
That is, for all $u,v\in V$, the following diagram commutes:
$$
 \xymatrix{
  L \ar[d]_{\theta_{T}(u,v)} \ar[r]^{\phi}
                & L \ar[d]^{\theta_{T^{'}}(\psi(u),\psi(v))}  \\
    L \ar[r]_{\phi}
                & L            .}
$$
\end{pro}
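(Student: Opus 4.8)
The plan is to prove both parts by a direct computation, unwinding the explicit formula \eqref{crochet de V} for $[\cdot,\cdot,\cdot]_T$ and \eqref{indurep} for $\theta_T$, and exploiting only the defining properties of the morphism $(\phi,\psi)$: the relation $\phi\circ T=T'\circ\psi$ from \eqref{c1}, the intertwining $\psi\,\theta(x,y)=\theta(\phi(x),\phi(y))\,\psi$ from \eqref{c2}, and the fact that $\phi$ is a morphism of L.t.s, i.e. $\phi([x,y,z])=[\phi(x),\phi(y),\phi(z)]$ (recall that here $T$ and $T'$ live on the \emph{same} pair $(L,[\cdot,\cdot,\cdot];\theta)$, so no primed bracket appears). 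Before starting, I would record one auxiliary intertwining relation for $D$: since $D(x,y)=\theta(y,x)-\theta(x,y)$, applying \eqref{c2} to each summand gives $\psi\circ D(x,y)=D(\phi(x),\phi(y))\circ\psi$ for all $x,y\in L$, with no extra sign. This companion of \eqref{c2} is used repeatedly in both parts.

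For part $\textbf{(i)}$, I would apply $\psi$ to the three terms of $[u,v,w]_T=D(Tu,Tv)w+\theta(Tv,Tw)u-\theta(Tu,Tw)v$. Using the $D$- and $\theta$-intertwining relations, each $\psi\circ D(T\cdot,T\cdot)$ and $\psi\circ\theta(T\cdot,T\cdot)$ turns into $D(\phi T\cdot,\phi T\cdot)\circ\psi$, respectively $\theta(\phi T\cdot,\phi T\cdot)\circ\psi$; then I would replace $\phi\circ T$ by $T'\circ\psi$ via \eqref{c1}. The outcome is exactly $D(T'\psi u,T'\psi v)\psi(w)+\theta(T'\psi v,T'\psi w)\psi(u)-\theta(T'\psi u,T'\psi w)\psi(v)=[\psi(u),\psi(v),\psi(w)]_{T'}$, which is the assertion that $\psi$ is a L.t.s morphism. (Alternatively, this follows from the earlier proposition that $\psi$ is a morphism of the induced pre-Lie triple systems together with the expression \eqref{comm} of the subadjacent bracket; but the direct route is self-contained.)

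For part $\textbf{(ii)}$, I would apply $\phi$ to $\theta_T(u,v)x=[x,Tu,Tv]+T\big(\theta(x,Tv)u-D(x,Tu)v\big)$ and split the result into two pieces. On the bracket term, using that $\phi$ is a L.t.s morphism and then \eqref{c1} gives $[\phi(x),T'\psi u,T'\psi v]$. On the image term, I would first move $\phi$ past $T$ through $\phi\circ T=T'\circ\psi$, then push $\psi$ inside through $\theta$ and $D$ via \eqref{c2} and its $D$-companion, and once more rewrite $\phi\circ T=T'\circ\psi$; this produces $T'\big(\theta(\phi x,T'\psi v)\psi u-D(\phi x,T'\psi u)\psi v\big)$. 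Adding the two pieces and comparing with \eqref{indurep} evaluated at $(\psi u,\psi v)$ on the argument $\phi(x)$ yields $\theta_{T'}(\psi u,\psi v)(\phi x)$, which is precisely the commuting square.

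I do not expect a genuine obstacle here; the content is essentially bookkeeping. The only place that demands care is the ordering of arguments: because $\theta$ is not symmetric, one must track carefully which slot receives which vector when invoking \eqref{c2} and the $D$-companion, and one must apply \eqref{c1} on the correct side. The single fact on which both parts hinge is the correctness of the $D$-intertwining (inherited from \eqref{c2} with no additional sign), so I would state and verify that first and then proceed with the two substitutions above.
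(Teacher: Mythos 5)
Your proposal is correct and follows essentially the same route as the paper: both parts are proved by direct substitution into \eqref{crochet de V} and \eqref{indurep}, using \eqref{c1}, \eqref{c2}, the induced intertwining $\psi\circ D(x,y)=D(\phi(x),\phi(y))\circ\psi$, and the fact that $\phi$ is a L.t.s morphism. Your explicit isolation of the $D$-intertwining as a preliminary lemma is a minor presentational improvement over the paper, which uses it silently.
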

\begin{proof}
By, $\eqref{c1}$, $\eqref{c2}$, we have 
\begin{align*}
    &\psi([u,v,w]_T)\;=\psi\Big(D(T(u),T(v))w-\theta(T(u),T(w))v+\theta(T(v),T(w))u\Big)\\
    &\quad \quad \quad\quad\;\;\; \;\;\;=D(\phi T(u),\phi T(v))\psi(w)-\theta(\phi T(u),\phi T(w))\psi(v)+\theta(\phi T(v),\phi T(w))\psi(u)\\
    &\quad \quad \quad\quad\;\;\; \;\;\;=D( T^{'} \psi(u), T^{'}\psi(v))\psi(w)-\theta( T^{'}\psi(u), T^{'}\psi(w))\psi(v)+\theta( T^{'}\psi(v), T^{'}\psi(w))\psi(u)\\
    &\quad \quad \quad\quad\;\;\; \;\;\;=[\psi(u),\psi(v),\psi(w)]_{T^{'}}.
\end{align*}
Now, according to $\eqref{c1}$, $\eqref{c2}$ and $\eqref{indurep}$, for all $u,v\in V,\; x\in L$, we have 
\begin{align*}
    &\phi(\theta_{T}(u,v)x)\;=\phi([x,T(u),T(v)])+\phi(T(\theta(x,T(v))u))-\phi(T(D(x,T(u))v))\\
    &\quad \quad \quad\quad\;\;\; \;\;\;=[\phi(x),\phi(T(u)),\phi(T(v))]+T^{'}(\psi(\theta(x,T(v))u))-T^{'}(\psi(D(x,T(u))v))\\
    &\quad \quad \quad\quad\;\;\; \;\;\;=[\phi(x),T^{'}(\psi(u)),T^{'}(\psi(v))]+T^{'}\Big(\theta(\phi(x),T^{'}(\psi(v)))\psi(u)-D(\phi(x),T^{'}(\psi(u)))\psi(v)\Big)\\
     &\quad \quad \quad\quad\;\;\; \;\;\;=\theta_{T^{'}}(\psi(u),\psi(v))\phi(x).
\end{align*}
\end{proof}
Let $T$ and $T^{'}$ be two $\mathcal{O}$-operators on $L$ with respect to the representation $(V,\theta)$ and  $(\phi,\psi)$ a morphism from $T$ to $T^{'}$ with $\psi$ invertible. Denote by  $C^{2n-1}_T(V_T,L)$ the space of $(2n-1)$-cochains of L.t.s $(V,[\c,\c,\c]_T)$  with coefficients on the representation $(L,\theta_T)$. Define
\begin{equation*}
  \begin{cases}
  \gamma:C^{2n-1}_{T}(V_T,L)\rightarrow C^{2n-1}_{T^{'}}(V_{T^{'}},L),\quad \quad n\geq 1,\\
  \gamma:L\wedge L\rightarrow L\wedge L,\quad \quad \quad \quad \quad \quad\quad \quad \;\;\; n=0.
  \end{cases}  
\end{equation*} by
\begin{equation*}
\begin{cases} \gamma(f)(u_1,\cdots,u_{2n-1})=\phi(f(\psi^{-1}(u_1),\cdots,\psi^{-1}(u_{2n-1}))),\; \forall u_i\in V,\quad \quad n\geq 1,\\
\gamma(\mathfrak{X})=\phi(\mathfrak{X})=(\phi(x_1),\phi(x_2)),\quad \quad\quad  \quad \quad\forall\mathfrak{X}=(x_1,x_2)\in L\wedge L,\quad\;\;\;  n=0.
\end{cases}
\end{equation*}
\begin{thm}
With the above notations, $\gamma$ is a cochain map from the cochain complex $(C^{*}_{T}(V_T,L),d_{T})$ to the cochain complex $(C^{*}_{T^{'}}(V_{T^{'}},L),d_{T^{'}})$. Consequently, it induces a morphism $\overline{\gamma}$ from the cohomology group $H^{2n-1}_{T}(V_T,L)$ to $H^{2n-1}_{T^{'}}(V_{T^{'}},L)$, for all $n\geq 0$.
\end{thm}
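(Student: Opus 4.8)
The plan is to verify directly that $\gamma$ intertwines the two coboundary operators, i.e. that $d_{T'}\circ\gamma=\gamma\circ d_T$ on every $C^{2n-1}_{T}(V_T,L)$; once this is established the statement about $\overline{\gamma}$ is automatic. Before starting I would record the three compatibility relations that do all the work. Two of them are exactly Proposition \ref{relrep}: part (i) gives $\psi([u,v,w]_T)=[\psi(u),\psi(v),\psi(w)]_{T'}$ and part (ii) gives $\phi\circ\theta_T(u,v)=\theta_{T'}(\psi(u),\psi(v))\circ\phi$. Since $D_T(u,v)=\theta_T(v,u)-\theta_T(u,v)$ (and likewise for $T'$), applying (ii) twice yields the third relation $\phi\circ D_T(u,v)=D_{T'}(\psi(u),\psi(v))\circ\phi$. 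Because $\psi$ is invertible, (i) also reads $\psi^{-1}([a,b,c]_{T'})=[\psi^{-1}(a),\psi^{-1}(b),\psi^{-1}(c)]_T$, which is the form actually needed below. I would also note in passing that $\gamma$ genuinely lands in the right cochain space: $\gamma(f)$ inherits the two defining identities of a cochain from $f$ because $\phi$ is linear and $\psi^{-1}$ is applied slotwise, e.g. $\gamma(f)(\dots,u,u,w)=\phi(f(\dots,\psi^{-1}u,\psi^{-1}u,\psi^{-1}w))=0$.

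Next I would treat the exceptional degree $n=0$ on its own, since there $d_T=\partial_T$ has a different shape. For $\mathfrak{X}=(x_1,x_2)\in L\wedge L$ and $u\in V$, I would compute
\begin{align*}
\gamma(\partial_T(\mathfrak{X}))(u)=\phi\big(\partial_T(\mathfrak{X})(\psi^{-1}u)\big)=\phi\big(TD(\mathfrak{X})\psi^{-1}u\big)-\phi\big([\mathfrak{X},T\psi^{-1}u]\big).
\end{align*}
In the first summand I push $\phi$ through $T$ by \eqref{c1} and through $D$ by \eqref{c2}, obtaining $T'D(\phi(\mathfrak{X}))u$; in the second summand I use that $\phi$ is a morphism of L.t.s together with \eqref{c1} to get $[\phi(\mathfrak{X}),T'u]$. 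The result is exactly $\partial_{T'}(\gamma(\mathfrak{X}))(u)=\partial_{T'}(\phi(\mathfrak{X}))(u)$, which settles $\gamma\circ\partial_T=\partial_{T'}\circ\gamma$.

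For $n\geq 1$ the argument is a term-by-term matching in the Yamaguti coboundary formula. I would write out $\delta_{T'}(\gamma f)(u_1,\dots,u_{2n+1})$ and substitute $(\gamma f)(\cdots)=\phi\big(f(\psi^{-1}(\cdots))\big)$ into each of its four families of terms. Using the intertwining relation for $\theta_{T'}$, the two leading terms become $\phi$ applied to the corresponding $\theta_T$-terms with all arguments replaced by their $\psi^{-1}$-images; the same happens to the $D_{T'}$-terms via the $D_T$-relation; and in the final double sum each inner bracket $[u_{2k-1},u_{2k},u_j]_{T'}$ is converted, after applying $\psi^{-1}$ inside $f$, into $[\psi^{-1}u_{2k-1},\psi^{-1}u_{2k},\psi^{-1}u_j]_T$ by the inverse form of Proposition \ref{relrep}(i). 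Since the coboundary formulas for $\delta_T$ and $\delta_{T'}$ have identical combinatorial structure and signs, collecting the transformed terms gives $\phi\big((\delta_T f)(\psi^{-1}u_1,\dots,\psi^{-1}u_{2n+1})\big)=\gamma(\delta_T f)(u_1,\dots,u_{2n+1})$, i.e. $\delta_{T'}\circ\gamma=\gamma\circ\delta_T$.

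Finally, a cochain map carries cocycles to cocycles and coboundaries to coboundaries, so $\gamma$ descends to a well-defined homomorphism $\overline{\gamma}\colon H^{2n-1}_T(V_T,L)\to H^{2n-1}_{T'}(V_{T'},L)$ for every $n\geq 0$. I expect no conceptual obstacle: the entire content is carrying $\phi$ past the operators $\theta_{T'},D_{T'}$ and carrying $\psi^{-1}$ past the bracket $[\c,\c,\c]_{T'}$, which is precisely what Proposition \ref{relrep} licenses. The only point that demands care is the degree $n\geq 1$ bracket terms, where invertibility of $\psi$ is essential so that $\psi^{-1}$ is itself a L.t.s morphism; this is the one place where the hypothesis that $(\phi,\psi)$ be taken with $\psi$ invertible is genuinely used.
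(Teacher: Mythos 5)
Your proposal is correct and follows essentially the same route as the paper: it isolates the degree $n=0$ case for $\partial_T$, and for $n\geq 1$ performs the same term-by-term matching in the Yamaguti coboundary formula using both parts of Proposition \ref{relrep} (the bracket relation via $\psi^{-1}$ and the intertwining of $\theta_T$, $D_T$ with $\phi$). The only additions beyond the paper's argument are the explicit check that $\gamma(f)$ lands in the correct cochain space and the remark pinpointing where invertibility of $\psi$ is used, both of which are harmless and accurate.
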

\begin{proof}
For $n=0$, let $\mathfrak{X}\in L\wedge L$ and $(\phi,\psi)$ be a morphism from $T$ to $T^{'}$.  Then we have 
\begin{align*}
    &(\partial_{T^{'}}\gamma(\mathfrak{X}))(v)=T^{'}(D(\gamma(\mathfrak{X}))v)-[\gamma(\mathfrak{X}),T^{'}v]\\
    &\quad \quad \quad \quad\;\; \;\;\;\; =T^{'}(D(\phi(\mathfrak{X}))\psi\circ \psi^{-1}(v))-[\phi(\mathfrak{X}),T^{'}\psi\circ\psi^{-1}(v)]\\
    &\quad \quad\;\;\; \;\;\;\;\;\overset{\eqref{c1}+\eqref{c2}}{=}T^{'}\psi(D(\mathfrak{X})\psi^{-1}(v))-[\phi(\mathfrak{X}),\phi(T\psi^{-1}(v))]\\
    &\quad \quad \;\;\; \;\;\; \;\;\;\;\;\;\overset{\eqref{c1}}{=}\phi T(D(\mathfrak{X})\psi^{-1}(v))-\phi([\mathfrak{X},T\psi^{-1}(v)])\\
    &\quad \quad \quad \quad\; \;\;\;\;\;=\phi(\partial_T(\mathfrak{X}))(\psi^{-1}(v))=\gamma(\partial_T(\mathfrak{X})(v)).
\end{align*}
Now, for any $f\in C^{2n-1}_{T}(V_T,L)(n\geq 1) $, by Proposition \ref{relrep}, we have
{\small\begin{align*}
  &(\delta_{T^{'}} \gamma(f))(u_1,u_2, \cdots , u_{2n+1})\nonumber\\
    =&\theta_{T^{'}}(u_{2n},u_{2n+1})\gamma(f)(u_1,u_2, \cdots , u_{2n-1})- \theta_{T^{'}}(u_{2n-1},u_{2n+1})\gamma(f)(u_1,u_2, \cdots , u_{2n-2},u_{2n}) \nonumber \\
    & +\sum_{k=1}^n (-1)^{n+k}D_{T^{'}}(u_{2k-1},u_{2k})\gamma(f)(u_1,u_2, \cdots , \widehat{u}_{2k-1},\widehat{u}_{2k}, \cdots , u_{2n+1}) \nonumber \\
    &+ \sum_{k=1}^n \sum_{j=2k+1}^{2n+1}  (-1)^{n+k+1} \gamma(f)(u_1,u_2, \cdots,  \widehat{u}_{2k-1},\widehat{u}_{2k}, \cdots, [u_{2k-1},u_{2k},u_j]_{T^{'}},\cdots , u_{2n+1}) \\
    =&\theta_{T^{'}}(u_{2n},u_{2n+1})\phi f\Big(\psi^{-1}(u_1),\psi^{-1}(u_2), \cdots , \psi^{-1}(u_{2n-1})\Big)\\
    &- \theta_{T^{'}}(u_{2n-1},u_{2n+1})\phi f\Big(\psi^{-1}(u_1),\psi^{-1}(u_2), \cdots , \psi^{-1}(u_{2n-2}),\psi^{-1}(u_{2n})\Big) \nonumber \\
    & +\sum_{k=1}^n (-1)^{n+k}D_{T^{'}}(u_{2k-1},u_{2k})\phi f\Big(\psi^{-1}(u_1),\psi^{-1}(u_2), \cdots , \widehat{\psi^{-1}(u_{2k-1})},\widehat{\psi^{-1}(u_{2k})}, \cdots , \psi^{-1}(u_{2n+1})\Big) \nonumber \\
    &+ \sum_{k=1}^n \sum_{j=2k+1}^{2n+1}  (-1)^{n+k+1}
    \phi f\Big(\psi^{-1}(u_1),\psi^{-1}(u_2), \cdots,  \widehat{\psi^{-1}(u_{2k-1})},\widehat{\psi^{-1}(u_{2k})},\\
    &\cdots, [\psi^{-1}(u_{2k-1}),\psi^{-1}(u_{2k}),\psi^{-1}(u_j)]_{T},\cdots , \psi^{-1}(u_{2n+1})\Big) \\
    =&\theta_{T^{'}}(\psi \circ \psi^{-1}(u_{2n}),\psi \circ \psi^{-1}(u_{2n+1}))\phi f\Big(\psi^{-1}(u_1),\psi^{-1}(u_2), \cdots , \psi^{-1}(u_{2n-1})\Big)\\
    &- \theta_{T^{'}}(\psi \circ \psi^{-1}(u_{2n-1}),\psi \circ \psi^{-1}(u_{2n+1}))\phi f\Big(\psi^{-1}(u_1),\psi^{-1}(u_2), \cdots , \psi^{-1}(u_{2n-2}),\psi^{-1}(u_{2n})\Big) \nonumber \\
    & +\sum_{k=1}^n (-1)^{n+k}D_{T^{'}}(\psi \circ \psi^{-1}(u_{2k-1}),\psi \circ \psi^{-1}(u_{2k}))\phi f\Big(\psi^{-1}(u_1),\psi^{-1}(u_2), \cdots , \widehat{\psi^{-1}(u_{2k-1})},\widehat{\psi^{-1}(u_{2k})},\\
    &\cdots , \psi^{-1}(u_{2n+1})\Big) + \sum_{k=1}^n \sum_{j=2k+1}^{2n+1}  (-1)^{n+k+1}
    \phi f\Big(\psi^{-1}(u_1),\psi^{-1}(u_2), \cdots,  \widehat{\psi^{-1}(u_{2k-1})},\widehat{\psi^{-1}(u_{2k})},\\
    &\cdots, [\psi^{-1}(u_{2k-1}),\psi^{-1}(u_{2k}),\psi^{-1}(u_j)]_{T},\cdots , \psi^{-1}(u_{2n+1})\Big)\\
    =&\phi\Big(\theta_{T}( \psi^{-1}(u_{2n}), \psi^{-1}(u_{2n+1})) f\Big(\psi^{-1}(u_1),\psi^{-1}(u_2), \cdots , \psi^{-1}(u_{2n-1})\Big)\\
    &- \theta_{T}( \psi^{-1}(u_{2n-1}),\psi^{-1}(u_{2n+1})) f\Big(\psi^{-1}(u_1),\psi^{-1}(u_2), \cdots , \psi^{-1}(u_{2n-2}),\psi^{-1}(u_{2n})\Big) \nonumber \\
    & +\sum_{k=1}^n (-1)^{n+k}D_{T}(\psi^{-1}(u_{2k-1}), \psi^{-1}(u_{2k})) f\Big(\psi^{-1}(u_1),\psi^{-1}(u_2), \cdots , \widehat{\psi^{-1}(u_{2k-1})},\widehat{\psi^{-1}(u_{2k})},\\
    &\cdots , \psi^{-1}(u_{2n+1})\Big)  +\sum_{k=1}^n \sum_{j=2k+1}^{2n+1}  (-1)^{n+k+1}
     f\Big(\psi^{-1}(u_1),\psi^{-1}(u_2), \cdots,  \widehat{\psi^{-1}(u_{2k-1})},\widehat{\psi^{-1}(u_{2k})},\\
    &\cdots, [\psi^{-1}(u_{2k-1}),\psi^{-1}(u_{2k}),\psi^{-1}(u_j)]_{T},\cdots , \psi^{-1}(u_{2n+1})\Big) \Big)\\
    =&\phi \Big(\delta_{T}(f)(\psi^{-1}(u_1),\cdots,\psi^{-1}(u_{2n+1}))\Big)=\gamma(\delta_{T}(f))(u_1,\cdots,u_{2n+1}).
\end{align*}}
Thus $\gamma$ is a cochain map. Consequently, it induces a morphism $\overline{\gamma}$ from the cohomology group $H^{2n-1}_{T}(V_T,L)$ to $H^{2n-1}_{T^{'}}(V_{T^{'}},L)$, for all $n\geq 0$.
\end{proof}
\section{Deformation of an $\mathcal{O}$-operator on a Lie triple system}
In this section, we will apply the classical deformation theory due to  Gerstenhaber (infinitesimal and formal deformations) to an $\mathcal{O}$-operator on a L.t.s. We will introduce the notion of 
Nijenhuis element associated to an $\mathcal{O}$-operator that arise from trivial deformations.
We also consider the rigidity of an $\mathcal{O}$-operator and provide a sufficient condition  in terms of Nijenhuis elements. 
\subsection{Infinitesimal deformations}
Let 
$(L, [\cdot,\cdot, \cdot],\theta)$ be a \textsf{L.t.sRep} pair. Suppose  $T : V \rightarrow L$ is an $\mathcal{O}$-operator associated to representation $(V, \theta)$.
\begin{defi}
A  parametrized sum $T_t = T + tT_1$,  for some $T_1 \in Hom(V, L)$, is called an infinitesimal deformation of $T$ if $T_t$ is an  $\mathcal{O}$-operator for all values of $t$. In this case, we say that
$T_1$ generates an infinitesimal deformation of $T$.
\end{defi}
Suppose $T_1$ generates an infinitesimal deformation of $T$. Then we have
\begin{equation*}
    [T_t(u),T_t(v),T_t(w)]=T_t\Big(D(T_t(u),T_t(v))w+\theta(T_t(v),T_t(w))u-\theta(T_t(u),T_t(w))v\Big), \quad \forall u,v,w \in V.
\end{equation*}
This is equivalent to the following conditions
\begin{align}\label{eq:defo}
&[Tu,Tv,T_1(w)]+[Tu,T_1(v),Tw]+[T_1(u),Tv,Tw]\nonumber\\
=&T\Big(D(Tu,T_1(v))w+D(T_1(u),Tv)w+\theta(Tv,T_1(w))u+\theta(T_1(v),Tw)u\nonumber\\
&-\theta(Tu,T_1(w))v-\theta(T_1(u),Tw)v\Big)+T_1\Big(D(Tu,Tv)w+\theta(Tv,Tw)u-\theta(Tu,Tw)v\Big),
\end{align}
\begin{align}
&[Tu,T_1(v),T_1(w)]+[T_1(u),Tv,T_1(w)]+[T_1(u),T_1(v),Tw]\nonumber\\
=&T\Big(D(T_1(u),T_1(v))w+\theta(T_1(v),T_1(w))u-\theta(T_1(u),T_1(w))v\Big)\nonumber\\
&+T_1\Big(D(Tu,T_1(v))w+D(T_1(u),Tv)w+\theta(Tv,T_1(w))u\nonumber\\
&+\theta(T_1(v),Tw)u-\theta(Tu,T_1(w))v-\theta(T_1(u),Tw)v\Big),
\end{align}
and
\begin{align}\label{defo1}
&[T_1(u),T_1(v),T_1(w)]=T_1\Big(D(T_1(u),T_1(v))w+\theta(T_1(v),T_1(w))u-\theta(T_1(u),T_1(w))v\Big).
\end{align}
Note that, by Eq.\eqref{lts 1}, we have 
\begin{align*}
    &[Tu,Tv,T_1(w)]=-[Tv,T_1(w),Tu]-[T_1(w),Tu,Tv]= [T_1(w),Tv,Tu]+[Tu,T_1(w),Tv].
\end{align*}
Then the identity \eqref{eq:defo} implies that $T_1$ is a $1$-cocycle (closed) in the cohomology of $T$. Hence, $T_1$ defines a
cohomology class in $\mathcal{H}^1(V,L)$. Eq. \eqref{defo1} means that $T_1$ is an $\mathcal{O}$-operator on the \textsf{L.t.sRep} pair
$(L, [\cdot,\cdot, \cdot];\theta)$.

\begin{defi}Let  $T_t = T + tT_1$ and $T^{'}_t = T + tT^{'}_1$ be
two infinitesimal  deformations of an  $\mathcal{O}$-operator $T$. We say that  $T_t $ and $T^{'}_t $ are equivalent if there exists an element $\mathfrak{X}\in L\wedge L$ such that the pair
\begin{equation}
\Big(\phi_{t}=Id_L+t\mathcal{L}(\mathfrak{X})-=Id_L+t[\mathfrak{X},-],\;\psi_{t}=Id_V+tD(\mathfrak{X})(-)\Big),
\end{equation}
defines a morphism of  $\mathcal{O}$-operators from  $T_t$ to $T^{'}_{t}$.
\end{defi}
Since  $\phi_{t}=Id_L+t[\mathfrak{X},-]$ is a L.t.s morphism of $(L, [\cdot,\cdot, \cdot])$, then we have the following conditions
\begin{equation}\label{condition1}
\begin{cases}
[z_1,[\mathfrak{X},z_2],[\mathfrak{X},z_3]]+[[\mathfrak{X},z_1],z_2,[\mathfrak{X},z_3]]
+[[\mathfrak{X},z_1],[\mathfrak{X},z_2],z_3]=0,\\
[[\mathfrak{X},z_1],[\mathfrak{X},z_2],[\mathfrak{X},z_3]]=0,\;\text{for}\;z_1,z_2,z_3\in L.
\end{cases}
\end{equation}
The condition $\psi_t(\theta(z_1,z_2)u)=\theta(\phi_t(z_1),\phi_t(z_2))\psi_t(u)$ gives that
\begin{equation}\label{condition2}
\begin{cases}
\theta(z_1,[\mathfrak{X},z_2])D(\mathfrak{X})+\theta([\mathfrak{X},z_1],z_2)D(\mathfrak{X})+\theta([\mathfrak{X},z_1],[\mathfrak{X},z_2])=0,\\
\theta([\mathfrak{X},z_1],[\mathfrak{X},z_2])D(\mathfrak{X})=0.
\end{cases}
\end{equation}
Finally, the condition  $\phi_t\circ T_t=T^{'}_{t} \circ \psi_t$
is equivalent to

\begin{equation}\label{eq:defoo}
\begin{cases}
T_1(u)+[\mathfrak{X},Tu]=TD(\mathfrak{X})u+T^{'}_1(u),\\
[\mathfrak{X},T_1(u)]=T^{'}_1D(\mathfrak{X})(u).
\end{cases}
\end{equation}
Note that the above identities hold for all $\mathfrak{X}\in L\wedge L$, $z_1,z_2,z_3\in L$ and $u\in V$.\\

From the first condition of \eqref{eq:defoo}, we have
\begin{align*}
&T_1(u)-T^{'}_1(u)=TD(\mathfrak{X})u-[\mathfrak{X},Tu]=(d_T\mathfrak{X})(u).
\end{align*}
Therefore, we get the following result.
\begin{thm}
Let $T_t = T + tT_1$ and $T^{'}_t = T + tT^{'}_1$ be two equivalent infinitesimal deformations of an
$\mathcal{O}$-operator $T$. Then $T_1$ and $T^{'}_1$ defines the same cohomology class in ${H}_{T}^{1}(V;L)$.
\end{thm}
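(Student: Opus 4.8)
The plan is to show that the two cocycles $T_1$ and $T_1'$ differ by a coboundary, so that they represent the same class in $H^1_T(V;L)$. First I would recall that each of $T_1, T_1'$ is already known to be a $1$-cocycle: the linear-in-$t$ part of the $\mathcal{O}$-operator condition for $T_t=T+tT_1$ is precisely Eq.~\eqref{eq:defo}, and the remark following that equation identifies it (after the rewriting via \eqref{lts 1}) with the closedness condition $\delta^1_T T_1 = 0$; the same holds verbatim for $T_1'$. Thus both lie in $Z^1(V,L)$, and it remains only to compare their classes.

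Next I would extract the difference $T_1 - T_1'$ from the equivalence hypothesis. By definition of equivalent deformations there is $\mathfrak{X}\in L\wedge L$ for which $(\phi_t,\psi_t)$ is a morphism of $\mathcal{O}$-operators from $T_t$ to $T_t'$; reading off the coefficient of $t$ in the intertwining relation $\phi_t\circ T_t = T_t'\circ\psi_t$ gives the first equation of \eqref{eq:defoo}, and hence the identity already displayed just before the statement,
\begin{equation*}
T_1(u)-T_1'(u)=TD(\mathfrak{X})u-[\mathfrak{X},Tu]=\partial_T(\mathfrak{X})(u),\qquad \forall u\in V.
\end{equation*}
The only substantive input here is recognizing the right-hand side as the bottom-degree differential $d_T=\partial_T$ applied to $\mathfrak{X}$, which is immediate from the definition $\partial_T(\mathfrak{X})v = TD(\mathfrak{X})v-[\mathfrak{X},Tv]$.

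Finally, since $\mathfrak{X}\in L\wedge L = C^{-1}_T(V,L)$ is a degree-zero cochain of the extended complex and $d_T$ restricts to $\partial_T$ there, the cochain $\partial_T(\mathfrak{X})=d_T\mathfrak{X}$ is by definition a $1$-coboundary, i.e.\ $T_1-T_1'\in B^1(V,L)$. As both $T_1$ and $T_1'$ are cocycles, this yields $[T_1]=[T_1']$ in $H^1_T(V;L)$, as claimed. There is no genuine obstacle in this argument: the entire analytic content sits in the preceding unwinding of the equivalence conditions \eqref{condition1}–\eqref{eq:defoo}. The one point deserving care is purely bookkeeping, namely that $H^1_T(V;L)=Z^1/B^1$ is well defined with $B^1$ taken to be the image of $\partial_T$ on $L\wedge L$; this rests on the earlier Proposition establishing $\delta^1_T\circ\partial_T=0$, which guarantees $B^1\subseteq Z^1$ and legitimizes the quotient.
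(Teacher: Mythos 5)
Your proposal is correct and follows essentially the same route as the paper: both extract the coefficient of $t$ from $\phi_t\circ T_t=T'_t\circ\psi_t$ to get $T_1-T'_1=TD(\mathfrak{X})(\cdot)-[\mathfrak{X},T(\cdot)]=\partial_T(\mathfrak{X})=d_T\mathfrak{X}$, hence a $1$-coboundary. Your added remarks that $T_1,T'_1$ are themselves cocycles and that $\delta^1_T\circ\partial_T=0$ justifies the quotient are points the paper leaves implicit, but they do not change the argument.
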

\begin{defi}
Let $T_t = T + tT_1$ be an infinitesimal deformation  of an $\mathcal{O}$-operator $T$. The deformation  $T_t $ is said to be
trivial if $(\phi_{t},\psi_{t})$ is a morphism
from $T_t$ to $T$.
\end{defi}
Now we will define Nijenhuis elements associated to an $\mathcal{O}$operator on a L.t.s.
\begin{defi}
Let $(L, [\cdot,\cdot, \cdot];\theta)$ be a \textsf{L.t.sRep} pair and  $T$ be an $\mathcal{O}$-operator. An element $\mathfrak{X}\in L\wedge L$ is called a Nijenhuis element associated to $T$ if $\mathfrak{X}$ satisfies Eqs.\eqref{condition1}, \eqref{condition2} and the equation 
\begin{equation}\label{condition3}
    [\mathfrak{X},TD(\mathfrak{X})(u)-[\mathfrak{X},T(u)]]=0.
\end{equation}
The set of Nijenhuis elements associated to an $\mathcal{O}$-operator $T$ is denoted by  Nij$(T )$.
\end{defi}
Our motivation to introduce the above definition is that a trivial infinitesimal deformation gives rise to a Nijenhuis
element. We give in the next subsection a sufficient
condition to the rigidity of an $\mathcal{O}$-operator in terms of Nijenhuis elements.

\subsection{Formal deformations}
In this subsection, we consider formal deformations of $\mathcal{O}$-operators generalizing the
classical deformation theory of Gerstenhaber \cite{Gerstenhaber}. Let $(L, [\cdot,\cdot, \cdot];\theta)$ be a \textsf{L.t.sRep} pair and $T:V\rightarrow L$  be an $\mathcal{O}$-operator.

Let $\mathbb{K}[[t]]$ be the ring of power series in one variable $t$. For any $\mathbb{K}$-linear space $L$, we denote by $L[[t]]$ the
vector space of formal power series in $t$ with coefficients in $V$. If in addition, we have a structure of L.t.s $(L, [\cdot,\cdot, \cdot])$ over $\mathbb{K}$, then there is a L.t.s structure over the ring $\mathbb{K}[[t]]$ on $L[[t]]$ given
by
\begin{equation}\label{eq:power3-lie}
\Big[\sum_{i=0}^{+\infty}x_it^{i},\sum_{j=0}^{+\infty}y_jt^{j},\sum_{k=0}^{+\infty}z_kt^{k}\Big]=\sum_{s=0}^{+\infty}\sum_{i+j+k=s}[x_i,y_j,z_k]t^{s},\;\forall x_i,y_j,z_k\in L.
\end{equation}
For any representation $(V,\theta)$ of $(L, [\cdot,\cdot, \cdot])$, there is a natural representation of the L.t.s
$L[[t]]$ on the $\mathbb{K}[[t]]$-module $V[[t]]$, which is given by
\begin{equation}
\theta\Big(\sum_{i=0}^{+\infty}x_it^{i},\sum_{j=0}^{+\infty}y_jt^{j}\Big)\Big(\sum_{k=0}^{+\infty}V_kt^{k}\Big)=\sum_{s=0}^{+\infty}\sum_{i+j+k=s}\theta(x_i,y_j)v_kt^{s},\;\forall x_i,y_j\in L,\;v_k\in V.
\end{equation}
 Consider a power series
\begin{equation}\label{eq:formal1}
T_t=\sum_{i=0}^{+\infty}T_it^{i},\;T_i\in Hom_{\mathbb{K}}(V;L),
\end{equation}
that is, $T_t\in Hom_{\mathbb{K}}(V;L)[[t]]=Hom_{\mathbb{K}}(V;L[[t]])$. Extend it to be a $\mathbb{K}[[t]]$-module map from
$V[[t]]$ to $L[[t]]$ which is still denoted by $T_t$.
\begin{defi}
If $T_t=\displaystyle\sum_{i=0}^{+\infty}T_it^{i}$ with $T_0=T$ satisfies
\begin{align}\label{eq:formal2}
&[T_t(u),T_t(v),T_t(w)]=T_t\Big(D(T_t(u),T_t(v))w+\theta(T_t(v),T_t(w))u-\theta(T_t(u),T_t(w))v\Big),
\end{align}
we say that $T_t$ is a formal deformation of the  $\mathcal{O}$-operator $T$.
\end{defi}
Recall that a formal deformation of a L.t.s $(L, [\cdot,\cdot, \cdot])$  is a formal power series $ \omega_t=\displaystyle\sum_{k=0}^{+\infty}\omega_kt^{k}$
where $\omega_k\in Hom((\wedge^{2}L)\otimes L; L)$ such that $\omega_0(x, y, z)=[x, y, z]$ for any $x, y, z \in L$ and $\omega_t$ defines a L.t.s structure over the ring $\mathbb{K}[[t]]$ on $L[[t]]$.\\
Based on the relationship between  $\mathcal{O}$-operators and L.t.s structure, we have
\begin{pro}
Let  $T_t=\displaystyle\sum_{i=0}^{+\infty}T_it^{i}$ be a formal deformation of an  $\mathcal{O}$-operator $T$ on the \textsf{L.t.sRep} pair
$(L, [\cdot,\cdot, \cdot];\theta)$. Then $[\cdot,\cdot,\cdot]_{T_t}$ defined by
\begin{align*}
\quad\;[u,v,w]_{T_t}
&=\displaystyle\sum_{k=0}^{+\infty}\sum_{i+j=k}\Big(D(T_i(u),T_j(v))w+\theta(T_i(v),T_j(w))u-\theta(T_i(u),T_j(w))v\Big)t^{k},
\end{align*} for all $u,v,w\in V$,
is a formal deformation of the associated L.t.s $(V,[\cdot,\cdot,\cdot]_T)$ given in Lemma \ref{struV}.
\end{pro}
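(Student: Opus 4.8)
The plan is to recognize that the series $[\cdot,\cdot,\cdot]_{T_t}$ displayed in the statement is nothing but the induced L.t.s bracket of Lemma~\ref{struV} applied to the deformed operator $T_t$, now regarded over the ground ring $\mathbb{K}[[t]]$. Indeed, the defining equation \eqref{eq:formal2} of a formal deformation is exactly the $\mathcal{O}$-operator identity \eqref{O op on lts} for the $\mathbb{K}[[t]]$-linear map $T_t : V[[t]] \to L[[t]]$, with respect to the L.t.s structure \eqref{eq:power3-lie} on $L[[t]]$ and the representation $\theta$ extended $\mathbb{K}[[t]]$-bilinearly to $V[[t]]$. Thus $T_t$ is an $\mathcal{O}$-operator on the \textsf{L.t.sRep} pair $(L[[t]], [\cdot,\cdot,\cdot]; \theta)$ over $\mathbb{K}[[t]]$, and the whole problem reduces to the construction already carried out for a single $\mathcal{O}$-operator.

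First I would apply Lemma~\ref{struV} verbatim to $T_t$ over the base ring $\mathbb{K}[[t]]$; this produces a L.t.s structure on $V[[t]]$ whose bracket is
\begin{equation*}
D(T_t u, T_t v) w + \theta(T_t v, T_t w) u - \theta(T_t u, T_t w) v .
\end{equation*}
Next I would expand $T_t = \sum_{i} T_i t^i$ and use the $\mathbb{K}[[t]]$-bilinearity of $D$ and $\theta$ to collect the coefficient of $t^k$, obtaining precisely $\sum_{i+j=k}\big(D(T_i u, T_j v)w + \theta(T_i v, T_j w)u - \theta(T_i u, T_j w)v\big)$. This identifies the Lemma~\ref{struV} bracket for $T_t$ with the series $[\cdot,\cdot,\cdot]_{T_t}$ of the statement, so no independent verification of the two axioms \eqref{lts 1}--\eqref{lts 2} is needed.

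It then remains to check the two conditions defining a formal deformation of $(V,[\cdot,\cdot,\cdot]_T)$. The degree-zero coefficient ($k=0$, i.e. $i=j=0$ with $T_0 = T$) reduces to $D(Tu,Tv)w + \theta(Tv,Tw)u - \theta(Tu,Tw)v = [u,v,w]_T$ by \eqref{crochet de V}, so the leading term is the original bracket. That $[\cdot,\cdot,\cdot]_{T_t}$ defines a L.t.s over $\mathbb{K}[[t]]$ is exactly the output of Lemma~\ref{struV}; in particular each coefficient is alternating in its first two arguments, because the total bracket is (being a genuine L.t.s bracket on $V[[t]]$), so each $\omega_k \in Hom((\wedge^2 V)\otimes V; V)$ automatically.

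The only genuine subtlety, and the point I would be most careful about, is the legitimacy of invoking Lemma~\ref{struV} over $\mathbb{K}[[t]]$ rather than over the field $\mathbb{K}$. Since the paper has already installed the L.t.s structure \eqref{eq:power3-lie} on $L[[t]]$ together with the corresponding representation on $V[[t]]$, and since the proof of Lemma~\ref{struV} is a purely algebraic manipulation of the axioms valid over any commutative ground ring, this extension is harmless. One could instead give a direct proof by substituting the power series into the two L.t.s axioms for $[\cdot,\cdot,\cdot]_{T_t}$ and equating coefficients of each $t^s$, which yields the expected hierarchy of deformation equations; but this route is considerably more laborious and the conceptual argument via Lemma~\ref{struV} bypasses it entirely.
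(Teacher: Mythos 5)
Your argument is correct and coincides with what the paper intends: the paper states this proposition without proof, prefaced only by ``Based on the relationship between $\mathcal{O}$-operators and L.t.s structure,'' which is precisely your route of viewing $T_t$ as an $\mathcal{O}$-operator over $\mathbb{K}[[t]]$ and applying Lemma \ref{struV}. Your attention to the base-change subtlety and to the skew-symmetry of each coefficient $\omega_k$ is a welcome bit of extra care, not a deviation.
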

By applying Eqs.\eqref{eq:power3-lie}-\eqref{eq:formal1} to expand Eq.\eqref{eq:formal2} and collecting coefficients of $t^{s}$,
we see that Eq.\eqref{eq:formal2} is equivalent to the following system of equations
\begin{align}\label{eq:system}
&\sum_{\overset{{i+j+k=s}}{i,j,k\geq 0}}[T_i(u),T_j(v),T_k(w)]\nonumber\\
&=\sum_{\overset{{i+j+k=s}}{i,j,k\geq 0}}T_i\Big(D(T_j(u),T_k(v))w+\theta(T_j(v),T_k(w))u-\theta(T_j(u),T_k(w))v\Big),
\end{align}for all $s\geq 0,\;u,v,w\in V.$ 
\begin{pro}\label{pro1}
Let $T_t=\displaystyle\sum_{i=0}^{+\infty}T_it^{i}$ be a formal deformation of an $\mathcal{O}$-operator on the  \textsf{L.t.sRep} pair $(L, [\cdot,\cdot, \cdot];\theta)$. Then  $T_1$ is a $1$-cocycle in the cohomology of an $\mathcal{O}$-operator $T$, that is, $d_T (T_1)=0$.
\end{pro}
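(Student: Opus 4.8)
The plan is to isolate the linear term in $t$ of the deformation equation \eqref{eq:formal2} and to recognize it as the closedness condition $\delta_T^1 T_1 = 0$. Since $T_0 = T$, the constant term ($s = 0$) of the system \eqref{eq:system} merely reproduces the defining identity \eqref{O op on lts} of the $\mathcal{O}$-operator $T$, so it carries no information about $T_1$; the first nontrivial constraint is therefore the $s = 1$ equation. This reflects the general principle that the coefficient of the first power of $t$ in a formal deformation is always a cocycle for the controlling cohomology.

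First I would write out \eqref{eq:system} for $s = 1$: the only index triples $(i,j,k)$ with $i + j + k = 1$ and $i,j,k \geq 0$ are $(1,0,0)$, $(0,1,0)$ and $(0,0,1)$. Substituting $T_0 = T$ and collecting the three contributions on each side reproduces \emph{exactly} Eq.\eqref{eq:defo}. Thus the claim reduces to the statement, already established in the discussion of infinitesimal deformations, that \eqref{eq:defo} is equivalent to $T_1$ being a $1$-cocycle, i.e. $d_T(T_1)=\delta_T^1 T_1 = 0$.

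To make this last equivalence transparent I would compare \eqref{eq:defo} term by term with the explicit $1$-cocycle condition $\delta_T^1 T_1 = 0$, after substituting the definitions $[u,v,w]_T = D(Tu,Tv)w + \theta(Tv,Tw)u - \theta(Tu,Tw)v$ from \eqref{crochet de V}, together with $\theta_T(u,v)x = [x,Tu,Tv] + T(\theta(x,Tv)u - D(x,Tu)v)$ from \eqref{indurep} and $D_T(u,v) = \theta_T(v,u) - \theta_T(u,v)$. The only point that is not an immediate matching of terms is the left-hand side: the cocycle condition groups the bracket contributions as $[f(v_1),Tv_2,Tv_3] + [Tv_1,f(v_2),Tv_3] + [Tv_1,Tv_2,f(v_3)]$ with $f=T_1$, whereas \eqref{eq:defo} produces $[Tu,Tv,T_1(w)]$ in the last slot. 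I would reconcile these using the ternary symmetry \eqref{lts 1}, exactly as in the identity
$$[Tu,Tv,T_1(w)] = [T_1(w),Tv,Tu] + [Tu,T_1(w),Tv]$$
already recorded above; once the left-hand sides agree, the remaining $T(\cdots)$ and $T_1(\cdots)$ terms coincide verbatim.

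The main obstacle is bookkeeping rather than conceptual: one must keep careful track of the signs and of which argument each of $\theta$ and $D$ acts on when expanding $\theta_T$ and $D_T$, and correctly invoke \eqref{lts 1} (and, where the derivation property is needed, \eqref{lts 2}) to bring the bracket terms into the canonical cocycle arrangement. No genuinely new identity beyond those already used in the infinitesimal deformation subsection is required, so the argument is short modulo this routine verification.
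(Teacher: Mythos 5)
Your proposal is correct and follows essentially the same route as the paper: the paper's proof likewise extracts the $s=1$ coefficient of Eq.~\eqref{eq:system}, observes that it reproduces Eq.~\eqref{eq:defo}, and concludes $d_T(T_1)=0$ from the identification of that equation with the $1$-cocycle condition already made in the infinitesimal-deformation subsection. Your extra term-by-term matching via \eqref{lts 1} just makes explicit a step the paper leaves implicit.
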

\begin{proof}
Note that \eqref{eq:system} holds for $s=0$ as $T_0=T$ is an $\mathcal{O}$-operator. For $s=1$, we get
\begin{align*}
&[Tu,Tv,T_1(w)]+[Tu,T_1(v),Tw]+[T_1(u),Tv,Tw]\\
&=T\Big(D(Tu,T_1(v))w+D(T_1(u),Tv)w+\theta(Tv,T_1(w))u+\theta(T_1(v),Tw)u\\
&-\theta(Tu,T_1(w))v-\theta(T_1(u),Tw)v\Big)+T_1\Big(D(Tu,Tv)w+\theta(Tv,Tw)u-\theta(Tu,Tw)v\Big),
\end{align*}
 for all $u,v,w\in V$. This implies that $(d_T (T_1))(u, v,w) = 0$. Hence the linear term $T_1$ is a $1$-cocycle in the cohomology of $T$.
 \end {proof}
 \begin{defi}
Let $T$ be an $\mathcal{O}$-operator on the  \textsf{L.t.sRep} pair $(L, [\cdot,\cdot, \cdot];\theta)$. The $1$-cocycle given in Proposition \ref{pro1} is called
the infinitesimal of the formal deformation $T_t=\displaystyle\sum_{i=0}^{+\infty}T_it^{i}$ of $T$.
 \end{defi}

In the sequel, we discuss equivalent formal deformations.
\begin{defi}
Let $T_t=\displaystyle\sum_{i=0}^{+\infty}T_it^{i}$ and $T^{'}_t=\displaystyle\sum_{i=0}^{+\infty}T{'}_it^{i}$ be two formal deformations of an $\mathcal{O}$-operator $T=T_0=T^{'}_0$ on a \textsf{L.t.sRep} pair
$(L, [\cdot,\cdot, \cdot];\theta)$. They are said to be equivalent if there exists
an element $\mathfrak{X}\in L\wedge L$, $\phi_i\in gl(L)$ and $\psi_i\in gl(V)$, $i\geq2$, such that the pair
\begin{equation}\label{eq:equi}
\Big(\phi_t=Id_L+t[\mathfrak{X},-]+\sum_{i=2}^{+\infty}\phi_it^{i},\;\psi_t=Id_V+tD(\mathfrak{X})(-)+\sum_{i=2}^{+\infty}\psi_it^{i}\Big),
\end{equation}
is a morphism of  $\mathcal{O}$-operators from  $T_t$ to $T^{'}_t$.
In particular, a formal deformation $T_t$ of an $\mathcal{O}$-operator $T$ is said to be trivial if there
exists an $\mathfrak{X}\in L\wedge L$, $\phi_i \in  gl(L)$ and $\psi_i \in gl(V)$, $i \geq 2$, such that $(\phi_t, \psi_t )$ defined by Eq. \eqref{eq:equi}
gives an equivalence between $T_t$ and $T$, with the latter regarded as a deformation of
itself.
\end{defi}
\begin{thm}
If two formal deformations of an $\mathcal{O}$-operator $T$ on a \textsf{L.t.sRep} pair
$(L, [\cdot,\cdot, \cdot];\theta)$ are equivalent, then their infinitesimals
are in the same cohomology class in ${H}_{T}^{1}(V;L)$.
\end{thm}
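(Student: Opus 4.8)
The plan is to read off the desired coboundary relation from the single compatibility condition $\phi_t\circ T_t=T'_t\circ\psi_t$ that is part of the definition of equivalence, taken at first order in $t$. By hypothesis the pair $(\phi_t,\psi_t)$ of Eq.~\eqref{eq:equi} is a morphism of $\mathcal{O}$-operators from $T_t$ to $T'_t$, so condition \eqref{c1} holds as an identity of power series, namely $\phi_t\circ T_t=T'_t\circ\psi_t$. First I would substitute the expansions $\phi_t=Id_L+t[\mathfrak{X},-]+\sum_{i\geq 2}\phi_i t^i$, $\psi_t=Id_V+tD(\mathfrak{X})(-)+\sum_{i\geq 2}\psi_i t^i$, together with $T_t=\sum_i T_i t^i$ and $T'_t=\sum_i T'_i t^i$, and then compare the coefficients of $t^1$ on both sides, evaluated on an arbitrary $u\in V$.

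Computing the left-hand side, the coefficient of $t^1$ in $\phi_t\circ T_t$ applied to $u$ is $T_1(u)+[\mathfrak{X},Tu]$, since $T_0=T$ and the terms $\phi_i$ with $i\geq 2$ do not contribute at first order. On the right-hand side, the coefficient of $t^1$ in $T'_t\circ\psi_t$ applied to $u$ is $T'_1(u)+T(D(\mathfrak{X})u)$, using $T'_0=T$. Equating the two yields
\begin{equation*}
T_1(u)-T'_1(u)=TD(\mathfrak{X})u-[\mathfrak{X},Tu],\qquad\forall u\in V.
\end{equation*}
By the very definition of $\partial_T$, the right-hand side equals $\partial_T(\mathfrak{X})(u)=(d_T\mathfrak{X})(u)$, so that $T_1-T'_1=d_T(\mathfrak{X})$ as elements of $C^1(V,L)$.

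It then remains to interpret this identity cohomologically. Since $\mathfrak{X}\in L\wedge L=C_T^{-1}(V,L)$ and $d_T=\partial_T$ in degree zero, the difference $T_1-T'_1$ lies in $B^1(V,L)$; moreover $T_1$ and $T'_1$ are both $1$-cocycles by Proposition \ref{pro1}. Hence $T_1$ and $T'_1$ represent the same class in $H^1_T(V;L)$, which is exactly the assertion. I do not expect any serious obstacle: the argument is a first-order coefficient comparison that parallels the infinitesimal case treated earlier in this section. The only point requiring care is the bookkeeping that the higher-order maps $\phi_i,\psi_i$ with $i\geq 2$ contribute nothing at order $t^1$, so that the resulting coboundary is governed solely by $\mathfrak{X}$; the remaining morphism condition \eqref{c2} and the higher-order parts of \eqref{c1} are not needed for this infinitesimal statement.
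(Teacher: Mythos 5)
Your proposal is correct and is essentially identical to the paper's own proof: both extract the coefficient of $t^1$ from $\phi_t\circ T_t=T'_t\circ\psi_t$ to obtain $T_1(u)-T'_1(u)=TD(\mathfrak{X})u-[\mathfrak{X},Tu]=(d_T\mathfrak{X})(u)$, and conclude that the infinitesimals differ by a coboundary. Your explicit remarks that the $\phi_i,\psi_i$ with $i\geq 2$ do not contribute at first order and that $T_1,T'_1$ are cocycles by Proposition \ref{pro1} are sound and only make the paper's terse argument more transparent.
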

\begin{proof} Let $(\phi_t,\psi_t)$ be the two maps defined by Eq.\eqref{eq:equi} which gives an equivalence between
two deformations $T_t=\displaystyle\sum_{i=0}^{+\infty}T_it^{i}$ and $T^{'}_t=\displaystyle\sum_{i=0}^{+\infty}T{'}_it^{i}$ of an $\mathcal{O}$-operator $T$. By
$(\phi_t \circ T_t)(u)=(T^{'}_t\circ \psi_t)(u)$, we have
\begin{align*}
&T_1(u)=T^{'}_1(u)+TD(\mathfrak{X})u-[\mathfrak{X},Tu]_\mathfrak{g}\\
&\quad \;\;\;\;\; =T^{'}_1(u)+(d_T\mathfrak{X})(u),\;\forall u\in V,
\end{align*}
which implies that $T_1$ and $T^{'}_1$ are in the same cohomology class.\end{proof}
\begin{defi}
An $\mathcal{O}$-operator $T$ is said to be rigid if any
formal deformation of $T_t$ is trivial.
\end{defi}

Now, we give  a cohomological characterization  of  rigidity of an $\mathcal{O}$-operator involving the set of Nijenhuis elements.

\begin{thm}
Let $T$ be an $\mathcal{O}$-operator. If $Z_{T}^{1}(V,L)=d_T(Nij(T))$, then $T$ is rigid.
\begin{proof}
Let $T_t =\sum_{i=0}^{\infty} t^iT_i$ be any formal deformation of $T$. We have seen in Proposition \ref{pro1} that $T_1$ is a $1$-cocycle in the
cohomology of $T$, i.e. $T_1 \in Z^{1}_{T}
(V, L)$. Thus, from the hypothesis, we get a Nijenhuis element $\mathfrak{X} \in Nij(T )$
such that $T_1 = d_T (\mathfrak{X})$. Then, setting
\begin{equation*}
    \phi_t=Id_L+t[\mathfrak{X},-],\quad \quad \psi_t=Id_V+tD(\mathfrak{X})(-),
\end{equation*}
and define $T^{'}_t=\phi_t\circ T_t \circ \psi_{t}^{-1}$, one obtains  $T^{'}_t$
 is a deformation equivalent to $T_t$. Moreover, we have
\begin{align*}
     &T^{'}_t(u)=(Id_L+t[\mathfrak{X},-]) (T_t \Big(Id_V-tD(\mathfrak{X})+t^{2}D^{2}(\mathfrak{X})+\cdots+(-1)^{i}t^{i}D^{i}(\mathfrak{X})+\cdots)(u)\Big))\\
     &\quad\;\;\;\;\;=T(u)+t(T_1(u)-TD(\mathfrak{X})(u)+[\mathfrak{X},T(u)])+t^{2}T^{'}_{2}(u)+\cdots\\
       &\quad\;\;\;\;\; =T(u)+t^{2}T^{'}_{2}(u)+\cdots\quad (as\; T_1(u)=d_T(\mathfrak{X})(u)).
\end{align*}
 Hence the coefficient of $t$ in the expression of $T^{'}_
t$ is trivial. By applying the same process repeatedly, we get that $T_t$ is equivalent to $T$. Therefore, 
$T$ is rigid.
\end{proof}
\end{thm}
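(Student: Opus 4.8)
The plan is to show that any formal deformation $T_t$ of a rigid-candidate $\mathcal{O}$-operator $T$ can be successively reduced to the trivial deformation, using the hypothesis $Z_T^1(V,L)=d_T(\mathrm{Nij}(T))$ together with the equivalence machinery already set up for formal deformations. The starting point is Proposition \ref{pro1}, which guarantees that the linear term $T_1$ of any formal deformation is a $1$-cocycle, so $T_1\in Z_T^1(V,L)$. By the hypothesis there is a Nijenhuis element $\mathfrak{X}\in\mathrm{Nij}(T)$ with $T_1=d_T(\mathfrak{X})$; here $d_T=\partial_T$ on $L\wedge L$, so explicitly $T_1(u)=TD(\mathfrak{X})u-[\mathfrak{X},Tu]$ for all $u\in V$.

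Next I would use this $\mathfrak{X}$ to build an equivalence that kills the linear term. Set $\phi_t=Id_L+t[\mathfrak{X},-]$ and $\psi_t=Id_V+tD(\mathfrak{X})(-)$, and define the transported deformation $T'_t=\phi_t\circ T_t\circ\psi_t^{-1}$. The crucial point is that $\mathfrak{X}$ being a Nijenhuis element means it satisfies Eqs.\eqref{condition1}, \eqref{condition2} and \eqref{condition3}, which are precisely the conditions ensuring that $\phi_t$ is an L.t.s morphism and that $(\phi_t,\psi_t)$ is a genuine morphism of $\mathcal{O}$-operators; hence $T'_t$ is again a formal deformation of $T$, equivalent to $T_t$. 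Expanding $\psi_t^{-1}=Id_V-tD(\mathfrak{X})+t^2D^2(\mathfrak{X})-\cdots$ as a geometric series in $t$ and collecting the coefficient of $t$, one finds
\begin{equation*}
T'_t(u)=T(u)+t\big(T_1(u)-TD(\mathfrak{X})(u)+[\mathfrak{X},T(u)]\big)+t^2T'_2(u)+\cdots,
\end{equation*}
and the coefficient of $t$ vanishes exactly because $T_1(u)=d_T(\mathfrak{X})(u)=TD(\mathfrak{X})u-[\mathfrak{X},Tu]$. Thus $T'_t=T+t^2T'_2+\cdots$ has trivial linear term.

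The final step is the inductive iteration. Since $T'_t$ is itself a formal deformation of $T$ whose first nonzero higher term is the coefficient of $t^2$, the same cohomological argument applies: examining Eq.\eqref{eq:system} at order $s=2$ for a deformation whose $t$-coefficient already vanishes shows that $T'_2$ is again a $1$-cocycle, so $T'_2\in Z_T^1(V,L)=d_T(\mathrm{Nij}(T))$, and one repeats the construction with a new Nijenhuis element to remove the $t^2$ term. Iterating, each stage eliminates the lowest surviving coefficient, and in the $t$-adic topology the composite equivalence converges to an equivalence between $T_t$ and $T$ itself; therefore every formal deformation of $T$ is trivial and $T$ is rigid.

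I expect the main obstacle to lie in verifying carefully that $(\phi_t,\psi_t)$ is really a morphism of $\mathcal{O}$-operators at each stage, i.e. that the Nijenhuis conditions \eqref{condition1}--\eqref{condition3} suffice to close all the required compatibilities \eqref{c1}--\eqref{c2} for the full power-series maps, and in making the inductive step precise: one must confirm that after removing the lowest term the new lowest coefficient is again a genuine $1$-cocycle lying in $Z_T^1(V,L)$, and that the infinite composition of the successive equivalences is well defined as a formal power series. These convergence-and-bookkeeping issues are routine in the $t$-adic setting but are where the real content of the rigidity argument resides.
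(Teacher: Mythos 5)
Your proposal follows essentially the same argument as the paper's proof: use Proposition \ref{pro1} to see $T_1\in Z_T^1(V,L)$, pull a Nijenhuis element $\mathfrak{X}$ from the hypothesis with $T_1=d_T(\mathfrak{X})$, conjugate by $(\phi_t,\psi_t)$ to kill the linear term, and iterate. The extra care you flag about verifying the morphism conditions for the full power series and about the inductive cocycle step at each order is precisely what the paper leaves implicit in ``by applying the same process repeatedly,'' so there is no substantive difference in approach.
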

\section{From cohomology groups of $\mathcal{O}$-operators on Lie algebras to those on Lie triple systems}

Motivated by the construction of L.t.s from  Lie algebras. We give some connections between $\mathcal{O}$-operators on Lie algebras and L.t.s.\\

A representation of a Lie algebra $(L,[\c,\c])$ on a vector space $V$ is a linear map $\rho: L \rightarrow End(V)$  such that 
\begin{equation*}
    \rho([x,y])=\rho(x)\rho(y)-\rho(y)\rho(x),\;\forall x,y\in L.
\end{equation*}
Similarly, we call the pair of Lie algebra $(L, [\c, \c])$ and
the representation $\rho$ a \textsf{LieRep} pair, denoted by $(L,[\c,\c];\rho)$.
Let $(L,[\c,\c],\rho)$ be a \textsf{LieRep} pair.  Define a bilinear bracket $[\c, \c]_{\rho}$ on $L\oplus V$ by
\begin{equation*}
  [x+u, y+v]_{\rho}=[x,y]+\rho(x)v-\rho(y)u,\;\forall x,y\in L,\; u,v\in V.  
\end{equation*}
Then $(L\oplus V,[\c, \c]_{\rho})$ is a  semi-direct product Lie algebra, denoted by $L\ltimes_{\rho}V$.\\

Let $(L,[\c,\c];\rho)$ be a \textsf{LieRep} pair. The space of $p$-cochains is $C_{Lie}^{p}(L,V)=Hom(\wedge^{p}L,V)$ for $p\geq 0$.  The
coboundary operator $\partial_{\rho}:C_{Lie}^{p}(L,V)\rightarrow C_{Lie}^{p+1}(L,V)$ is defined by
\begin{align*}
    &\partial_{\rho}(f)(x_1,\cdots,x_{p+1})\\
    &=\sum_{1\leq i < j \leq p+1}(-1)^{i+j}f([x_i,x_j],x_1,\cdots,\widehat{x_i},\cdots,\widehat{x_j},\cdots,x_{p+1})\\
    &+\sum_{i=1}^{p+1}(-1)^{i+1}\rho(x_i)f(x_1,\cdots,\widehat{x_i},\cdots,x_{p+1}),
\end{align*}
for $f\in C_{Lie}^{p}(L,V)(p \geq 0) $. We denote the corresponding $p$-th cohomology group by $H_{Lie}^{p}(L,V)$. 
\begin{defi}
A linear map $T:V \rightarrow L$ is called an $\mathcal{O}$-operator on a \textsf{LieRep} pair $(L,[\c,\c];\rho)$ if $T$ satisfies 
\begin{equation}
    [Tu,Tv]=T\Big(\rho(Tu)v-\rho(Tv)u\Big),\;\forall u,v\in V.
\end{equation}
\end{defi}
Let $T$ be an $\mathcal{O}$-operator on a LieRep pair $(L,[\c,\c];\rho)$. There is a Lie algebra structure on $(V,[\c,\c]_T)$, where the bracket $[\c,\c]_T:V\times V \rightarrow V$ is given by
\begin{equation}
    [u,v]_T=\rho(Tu)v-\rho(Tv)u\;\forall u,v\in V.
\end{equation}
Now, we recall some results from \cite{Tang}. There is a representation of $(V,[\c,\c]_T)$, $\rho_T:V\rightarrow gl(L)$, defined by
\begin{equation*}
\rho_T(u)x=[Tu,x]+T\rho(x)u,\; \forall u\in V,x\in L.    
\end{equation*}
Let $T$ be an $\mathcal{O}$-operator on a LieRep pair $(L,[\c,\c];\rho)$. Consider the set of $p$-cochains $C_{T}^{p}(V,L)=Hom(\wedge^{p}V,L)$. Let $\widetilde{d}_T:C_{T}^{p}(V,L)\rightarrow C_{T}^{p+1}(V,L) (p \geq 0)$ be the corresponding coboundary operator of the
Lie algebra $(V, [\c, \c]_T )$ with coefficients in the representation $(L,\rho_T)$, defined  for all $f\in C_{T}^{p}(V,L)$ and $u_1,\cdots,u_{p+1}\in V$ by

\begin{align*}
&\widetilde{d}_T(f)(u_1,\cdots,u_{p+1})\\
&=\sum_{1\leq i < j \leq p+1}(-1)^{i+j}f(\rho(Tu_i)u_j-\rho(Tu_j)u_i,u_1,\cdots,\widehat{u_i},\cdots,\widehat{u_j},\cdots,u_{p+1})\\
&+\sum_{i=1}^{p+1}(-1)^{i+1}\Big([Tu_i,f(u_1,\cdots,\widehat{u_i},\cdots,u_{p+1})]+T\rho(f(u_1,\cdots,\widehat{u_i},\cdots,u_{p+1}))(u_i) \Big).
\end{align*}

Then the cochain complex
$(C_{T}^{p}(V,L),)$ is called a cochain complex of the  $\mathcal{O}$-operator $T$ on a LieRep pair $(L,[\c,\c];\rho)$.
We denote the corresponding $p$-th cohomology group by $H_{T}^{p}(V,L)$.\\
As we know, there is a method of constructing L.t.s from Lie algebras given in Example \ref{lts}.
\begin{thm}
Let $(L,[\c,\c];\rho)$ be a LieRep pair. Define $\theta_{\rho}:\otimes^{2} L \rightarrow gl(V)$ by
\begin{equation}
    \theta_{\rho}(x,y)=\rho(y)\rho(x).
\end{equation}
Then $(L, [\c,\c,\c]=[\c,\c]\circ ([\c,\c]\otimes Id_L);\theta_{\rho})$ is a \textsf{L.t.sRep} pair. 
\end{thm}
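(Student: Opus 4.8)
The plan is to separate the statement into its two logical halves: that $(L,[\c,\c,\c])$ with $[x,y,z]=[[x,y],z]$ is a Lie triple system, and that $\theta_{\rho}$ is a representation of this L.t.s on $V$. The first half is exactly Example~\ref{lts}, which already asserts that $(L,[\c,\c,\c])$ is an L.t.s whenever $(L,[\c,\c])$ is a Lie algebra, so I would simply invoke it and spend no effort reproving \eqref{lts 1}--\eqref{lts 2}. All the real work is in checking the two representation axioms \eqref{rep lts 1} and \eqref{rep lts 2}.

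The whole computation is unlocked by one preliminary reduction. Since $\rho$ is a Lie algebra representation, $\rho([x,y])=\rho(x)\rho(y)-\rho(y)\rho(x)$; combining this with the definition $D(x,y)=\theta_{\rho}(y,x)-\theta_{\rho}(x,y)$ and $\theta_{\rho}(x,y)=\rho(y)\rho(x)$ gives
\[
D_{\rho}(x,y)=\rho(x)\rho(y)-\rho(y)\rho(x)=\rho([x,y]).
\]
This is the key step: it collapses every occurrence of the inner map $D_{\rho}$ into a single operator $\rho([x,y])$, so that both axioms become purely associative identities among composites $\rho(\c)\rho(\c)\cdots$, handled by the one relation $\rho([a,b])=\rho(a)\rho(b)-\rho(b)\rho(a)$.

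First I would verify \eqref{rep lts 1}. Substituting $\theta_{\rho}(x,y)=\rho(y)\rho(x)$, $[y,z,t]=[[y,z],t]$, and $D_{\rho}(y,z)=\rho([y,z])$, each of the four terms carries a common right factor $\rho(x)$, so it suffices to show
\[
\rho(t)\rho(z)\rho(y)-\rho(t)\rho(y)\rho(z)-\rho([[y,z],t])+\rho([y,z])\rho(t)=0.
\]
Applying $\rho(z)\rho(y)-\rho(y)\rho(z)=-\rho([y,z])$ to the first two terms and then $\rho([y,z])\rho(t)-\rho(t)\rho([y,z])=\rho([[y,z],t])$ to what remains, everything cancels. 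Next, for \eqref{rep lts 2}, writing $a=[x,y]$ so that $D_{\rho}(x,y)=\rho(a)$, the left-hand side becomes
\[
\rho(t)\rho(z)\rho(a)-\rho(a)\rho(t)\rho(z)+\rho(t)\rho([a,z])+\rho([a,t])\rho(z),
\]
and expanding $\rho([a,z])$ and $\rho([a,t])$ by the representation identity yields six triple products that cancel in pairs, giving $0$.

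The argument is entirely mechanical once the reduction $D_{\rho}=\rho([\c,\c])$ is in hand; the only genuine obstacle is bookkeeping, since $\rho$ takes values in $gl(V)$ and the composites are noncommutative, so the order of the factors $\rho(\c)\rho(\c)\rho(\c)$ must be tracked carefully. I note that the semidirect-product characterization \eqref{semidirect product} of a representation offers no real shortcut here beyond this direct verification.
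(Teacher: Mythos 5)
Your proof is correct, but it takes a genuinely different route from the paper's. You verify the two representation axioms \eqref{rep lts 1} and \eqref{rep lts 2} directly, with everything resting on the reduction $D_{\rho}(x,y)=\rho(x)\rho(y)-\rho(y)\rho(x)=\rho([x,y])$; both verifications check out (the common right factor $\rho(x)$ in \eqref{rep lts 1}, and the six-term pairwise cancellation in \eqref{rep lts 2} after expanding $\rho([a,z])$ and $\rho([a,t])$, are exactly right). The paper instead uses precisely the semi-direct product characterization that you set aside in your last sentence: it forms the semi-direct product Lie algebra $L\ltimes_{\rho}V$, passes to its induced Lie triple system via Example \ref{lts}, and computes
\[
[[x+u,y+v]_{\rho},z+w]_{\rho}=[x,y,z]+D_{\rho}(x,y)w-\theta_{\rho}(x,z)v+\theta_{\rho}(y,z)u,
\]
which is exactly the form \eqref{semidirect product}; since a bracket of that shape on $L\oplus V$ defines an L.t.s if and only if $\theta_{\rho}$ is a representation, the conclusion follows with no axiom-checking at all. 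So the semi-direct product route is in fact a real shortcut here, not a mere repackaging: it trades your two commutator computations for a single expansion of a double bracket, reusing Example \ref{lts} a second time at the level of $L\oplus V$. What your version buys is self-containedness and transparency --- it shows exactly where the Lie representation identity enters each of \eqref{rep lts 1} and \eqref{rep lts 2} --- at the cost of more noncommutative bookkeeping.
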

\begin{proof}Let $(L,[\c,\c];\rho)$ be a LieRep pair. We know that there is a L.t.s structure on $L$ given by the bracket $[\c,\c,\c]=[[\c,\c],\c]$. Now,
for all $x,y,z\in L$ and $u,v,w\in V$, we have 
\begin{align*}
    [x+u,y+v,z+w]_{L\oplus V}&=[[x+u,y+v]_{L\oplus V},z+w]_{L\oplus V}\\
    &=[[x,y]+\rho(x)(v)-\rho(y)(u),z+w]_{L\oplus V}\\
    &=[[x,y],z]+\rho([x,y])(w)-\rho(z)\rho(x)(v)+\rho(z)\rho(y(u)\\
    &=[x,y,z]+\Big(\rho(x)\rho(y)-\rho(y)\rho(x)\Big)(w)-\rho(z)\rho(x)(v)+\rho(z)\rho(y(u)\\
    &=[x,y,z]+\Big(\theta_{\rho}(y,x)-\theta_{\rho}(x,y)\Big)(w)-\theta_{\rho}(x,z)(v)+\theta_{\rho}(y,z)(u)\\
    &=[x,y,z]+D_{\rho}(x,y)(w)-\theta_{\rho}(x,z)(v)+\theta_{\rho}(y,z)(u).
\end{align*}
Then by Eq.\eqref{semidirect product}, $(L, [\c,\c,\c]=[\c,\c]\circ ([\c,\c]\otimes Id_L);\theta_{\rho})$ is a \textsf{L.t.sRep} pair.
\end{proof}
\begin{thm}
Every $1$-cocycle for the cohomology of LieRep pair $(L,[\c,\c];\rho)$ is a $1$-cocycle for the cohomology of the \textsf{L.t.sRep} pair $(L, [\c,\c,\c]=[\c,\c]\circ ([\c,\c]\otimes Id_L);\theta_{\rho})$.
\end{thm}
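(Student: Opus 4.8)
The plan is to make both cocycle conditions explicit and then compare them term by term. First I would record the hypothesis: evaluating the Lie coboundary $\partial_\rho$ in degree one on $f\in C_{Lie}^{1}(L,V)=\mathrm{Hom}(L,V)$ gives
\[
\partial_\rho(f)(x,y)=\rho(x)f(y)-\rho(y)f(x)-f([x,y]),
\]
so $f$ being a Lie-algebra $1$-cocycle means precisely that $f([x,y])=\rho(x)f(y)-\rho(y)f(x)$ for all $x,y\in L$.

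Next I would write down the target condition. By Definition \ref{cocycles}, saying that $f$ is a $1$-cocycle for the \textsf{L.t.sRep} pair $(L,[\c,\c,\c];\theta_\rho)$ amounts to
\[
D_\rho(x_1,x_2)f(x_3)-\theta_\rho(x_1,x_3)f(x_2)+\theta_\rho(x_2,x_3)f(x_1)-f([x_1,x_2,x_3])=0.
\]
The key preliminary observation is that, because $\theta_\rho(x,y)=\rho(y)\rho(x)$ and $\rho$ respects brackets, the inner map collapses: $D_\rho(x,y)=\theta_\rho(y,x)-\theta_\rho(x,y)=\rho(x)\rho(y)-\rho(y)\rho(x)=\rho([x,y])$. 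Substituting this, together with $\theta_\rho(x_i,x_3)=\rho(x_3)\rho(x_i)$ and the induced ternary bracket $[x_1,x_2,x_3]=[[x_1,x_2],x_3]$, the identity to be checked becomes
\[
\rho([x_1,x_2])f(x_3)-\rho(x_3)\rho(x_1)f(x_2)+\rho(x_3)\rho(x_2)f(x_1)-f([[x_1,x_2],x_3])=0.
\]

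The heart of the argument is then to expand the last term by applying the Lie-cocycle hypothesis twice. Applying it first to the pair $([x_1,x_2],x_3)$ gives $f([[x_1,x_2],x_3])=\rho([x_1,x_2])f(x_3)-\rho(x_3)f([x_1,x_2])$, and applying it again to $(x_1,x_2)$ turns $f([x_1,x_2])$ into $\rho(x_1)f(x_2)-\rho(x_2)f(x_1)$. Combining these, $f([[x_1,x_2],x_3])$ equals exactly the sum of the first three terms above, so the entire left-hand side cancels identically. I do not anticipate a genuine obstacle: the only point requiring care is the reduction $D_\rho=\rho([\c,\c])$, which uses the representation property of $\rho$; once this is in hand, the two cocycle equations are literally the same after the nested bracket is unfolded. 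I would finally remark that the degree-one skew-symmetry constraints on L.t.s cochains are vacuous here, since a $1$-cochain is merely a linear map $L\to V$ and carries no such relations.
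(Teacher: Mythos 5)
Your proposal is correct and follows essentially the same route as the paper: both reduce $D_\rho(x,y)$ to $\rho([x,y])$ via the representation property, unfold the ternary bracket as $[[x_1,x_2],x_3]$, and apply the Lie $1$-cocycle identity twice to expand $f([[x_1,x_2],x_3])$ until all terms cancel. No gaps; the concluding remark about the vacuity of the skew-symmetry constraints in degree one is a reasonable (if unstated in the paper) observation.
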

\begin{proof}
Let $\varphi$ be a $1$-cocycle of the cohomology of the LieRep pair $(L,[\c,\c],\rho)$, then 
\begin{equation*}
    \forall x,y\in L,\;\partial_{\rho}(\varphi)(x,y)=\rho(x)\varphi(y)-\rho(y)\varphi(x)-\varphi([x,y])=0.
\end{equation*}
Now, for any $x,y,z\in L$, we have
\begin{align*}
    \delta^{1}(\varphi)(x,y,z)=& D_{\rho}(x,y)\varphi(z)-\theta_{\rho}(x,z)\varphi(y)+\theta_{\rho}(y,z)\varphi(x)-\varphi([x,y,z])\\
    =&\rho([x,y])\varphi(z)-\rho(z)\rho(x)\varphi(y)+\rho(z)\rho(y)\varphi(x)-\varphi([[x,y],z])\\
    =&\rho([x,y])\varphi(z)-\rho(z)\rho(x)\varphi(y)+\rho(z)\rho(y)\varphi(x)-\rho([x,y])\varphi(z)+\rho(z)\varphi([x,y])\\
    =&\rho([x,y])\varphi(z)-\rho(z)\rho(x)\varphi(y)+\rho(z)\rho(y)\varphi(x)-\rho([x,y])\varphi(z)\\&+\rho(z)\rho(x)\varphi(y)-\rho(z)\rho(y)\varphi(x)\\=&0,
\end{align*}
which means that $\varphi$ is a $1$-cocycle for the  cohomology of the \textsf{L.t.sRep} pair $(L, [\c,\c,\c]=[\c,\c]\circ ([\c,\c]\otimes Id_L);\theta_{\rho})$.
\end{proof}
\begin{thm}
Let $\varphi\in Z^{2}_{Lie}(L,V)$. Then $\omega(x,y,z)=\varphi([x,y],z)-\rho(z)\varphi(x,y)$ is a $3$-cocycle of the \textsf{L.t.sRep} pair $(L, [\c,\c,\c]=[\c,\c]\circ ([\c,\c]\otimes Id_L);\theta_{\rho})$.
\end{thm}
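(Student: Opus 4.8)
To prove the statement I would verify the three conditions that define a Yamaguti $3$-cocycle of the \textsf{L.t.sRep} pair $(L,[\c,\c,\c]=[[\c,\c],\c];\theta_\rho)$, recalling that here $\theta_\rho(x,y)=\rho(y)\rho(x)$ and $D_\rho(x,y)=\theta_\rho(y,x)-\theta_\rho(x,y)=\rho([x,y])$. The first two (cochain) conditions are cheap. Since $[x,x]=0$ and $\varphi(x,x)=0$, one gets $\omega(x,x,z)=0$ at once. For the cyclic condition, summing $\omega(x,y,z)=\varphi([x,y],z)-\rho(z)\varphi(x,y)$ over cyclic permutations of $x,y,z$ and using the skew-symmetry of $\varphi$ and of $[\c,\c]$, the three $\varphi([\c,\c],\c)$-terms reproduce one side of the Chevalley--Eilenberg $2$-cocycle identity $\partial_\rho(\varphi)=0$ while the three $\rho(\c)\varphi(\c,\c)$-terms reproduce the other side; hence the cyclic sum vanishes.

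The compatibility relation (the genuine cohomological condition) is the main obstacle, and I would prove it conceptually rather than by a frontal expansion, using the twist attached to $\varphi$. Because $\varphi\in Z^2_{Lie}(L,V)$, the bracket
\[
[x+u,\,y+v]_\varphi=[x,y]+\rho(x)v-\rho(y)u+\varphi(x,y)
\]
defines a Lie algebra structure on $L\oplus V$, the abelian extension of $L$ by $V$ determined by $\varphi$ (the Jacobi identity is exactly $\partial_\rho(\varphi)=0$, and $V$ sits inside as an abelian ideal on which $L$ acts through $\rho$). By Example~\ref{lts} the associated ternary bracket $[a,b,c]_\varphi:=[[a,b]_\varphi,\,c]_\varphi$ then makes $L\oplus V$ into a L.t.s.

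The decisive computation is to expand $[a,b,c]_\varphi$ in components and to compare it with the semi-direct product bracket \eqref{semidirect product} coming from the representation $(V,\theta_\rho)$. For $a=x+u$, $b=y+v$, $c=z+w$ a short calculation yields
\[
[a,b,c]_\varphi=[x+u,\,y+v,\,z+w]_{L\oplus V}+\bigl(0,\ \omega(x,y,z)\bigr),
\qquad \omega(x,y,z)=\varphi([x,y],z)-\rho(z)\varphi(x,y),
\]
so that the twisted L.t.s. structure is exactly the semi-direct product one perturbed by the $V$-valued cochain $\omega$, which depends only on the $L$-components.

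It remains to read off that this perturbation is a $3$-cocycle. Both $[\c,\c,\c]_\varphi$ and $[\c,\c,\c]_{L\oplus V}$ obey the axioms \eqref{lts 1}--\eqref{lts 2}. Substituting $[\c,\c,\c]_\varphi=[\c,\c,\c]_{L\oplus V}+(0,\omega)$ into the quadratic identity \eqref{lts 2} and projecting onto $V$, the terms built only from $[\c,\c,\c]_{L\oplus V}$ cancel (that bracket already satisfies \eqref{lts 2}, as $(V,\theta_\rho)$ is a representation), and the terms in which $\omega$ is fed into another $\omega$ vanish, since $\omega$ takes values in the abelian part $V$ and the semi-direct product bracket ignores a $V$-entry except through $\theta_\rho$ and $D_\rho$. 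What survives is precisely the Yamaguti compatibility relation for $\omega$ with respect to $(\theta_\rho,D_\rho)$, so $\omega$ is a $3$-cocycle. The only delicate point here is the bookkeeping of the components of the non-linear axiom \eqref{lts 2}; the same identity can alternatively be checked by a direct, but considerably longer, computation starting from the definition of $\omega$ and repeatedly invoking $\partial_\rho(\varphi)=0$, the Jacobi identity in $L$, and $\rho([x,y])=\rho(x)\rho(y)-\rho(y)\rho(x)$.
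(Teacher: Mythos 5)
Your proof is correct, but it takes a genuinely different route from the paper's. The paper proves the compatibility identity by a frontal expansion of $\delta^{3}(\omega)(x,y,z,t,e)$: it unwinds $\theta_{\rho}(x,y)=\rho(y)\rho(x)$, $D_{\rho}(x,y)=\rho([x,y])$ and $[x,y,z]=[[x,y],z]$, cancels terms by hand, and invokes $\partial_{\rho}(\varphi)=0$ only at the very last step. You instead package $\varphi$ into the abelian extension $(L\oplus V,[\cdot,\cdot]_{\varphi})$, apply the Lie-to-L.t.s.\ construction of Example~\ref{lts} to that Lie algebra, and observe that the resulting ternary bracket is the semi-direct product bracket \eqref{semidirect product} for $(V,\theta_{\rho})$ perturbed by $(0,\omega)$; comparing the axioms \eqref{lts 1}--\eqref{lts 2} for the two L.t.s.\ structures then yields all three cocycle conditions at once (your sign bookkeeping checks out: the surviving terms $\theta_{\rho}(y_2,y_3)\omega(x_1,x_2,y_1)-\theta_{\rho}(y_1,y_3)\omega(x_1,x_2,y_2)+D_{\rho}(y_1,y_2)\omega(x_1,x_2,y_3)$ come out with exactly the signs in Definition~\ref{cocycles}, because $\omega$ lands in the abelian part and is fed into the first, second and third slots respectively). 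Your argument is shorter, explains where the formula $\omega=\varphi([\cdot,\cdot],\cdot)-\rho(\cdot)\varphi(\cdot,\cdot)$ comes from, and would extend to an induced map on cohomology classes; its cost is that it leans on the preceding theorem identifying the semi-direct product L.t.s.\ of $(V,\theta_{\rho})$ with the L.t.s.\ induced from $L\ltimes_{\rho}V$, and on the (easy but necessary) verification that the perturbation term depends only on the $L$-components so that no $\omega$-into-$\omega$ terms survive. The paper's computation is self-contained but long and offers no structural insight. Both are valid; if you write yours up, state explicitly the general principle you are using — if two L.t.s.\ brackets on $L\oplus V$ differ by a $V$-valued cochain depending only on the $L$-components, that cochain is a $3$-cocycle for the semi-direct product representation — since that is the one step you currently leave at the level of ``projecting onto $V$ and cancelling.''
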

\begin{proof}
Let $\varphi\in Z^{2}_{Lie}(L,V)$.  According to \eqref{SkewCochain} in Defintion \ref{cocycles}, we have
\begin{align*}
  &\omega(x,y,z)=\varphi([x,y],z)-\rho(z)\varphi(x,y)\\
  &\quad \;\;\;\;\;\; \; \quad=-\Big(\varphi([y,x],z)-\rho(z)\varphi(y,z)\Big)=-\omega(y,x,z).
\end{align*}
and 
\begin{align*}
  &\omega(x,y,z)+\omega(y,z,x)+\omega(z,x,y)\\ =& \varphi([x,y],z)-\rho(z)\varphi(x,y)+\varphi([y,z],x)-\rho(x)\varphi(y,z)+\varphi([z,x],y)-\rho(y)\varphi(z,x)\\
  =&\partial_{\rho}(\varphi)(x,y,z)=0.
\end{align*}
Now, for any $x,y,z,t,e\in L$, we have
\begin{align*}
    &\delta^{3}(\omega)(x,y,z,t,e)\\
    =&\omega(x,y,[z,t,e])+D_{\rho}(x,y)\omega(z,t,e)-\omega([x,y,z],t,e)-\omega(z,[x,y,t],e)\\
    &-\omega(z,t,[x,y,e])-\theta_{\rho}(t,e)\omega(x,y,z)+\theta_{\rho}(z,e)\omega(x,y,t)-D_{\rho}(z,t)\omega(x,y,e)\\
    =&\varphi([x,y],[[z,t],e])-\rho([[z,t],e])\varphi(x,y)+\rho([x,y])\varphi([z,t],e)-\rho([x,y])\rho(e)\varphi(z,t)\\
    &-\varphi([[[x,y],z],t],e)+\rho(e)\varphi([[x,y],z],t)-\varphi([z,[[x,y],t]],e)+\rho(e)\varphi(z,[[x,y],t])\\
    &-\varphi([z,t],[[x,y],e])+\rho([[x,y],e])\varphi(z,t)-\rho(e)\rho(t)\varphi([x,y],z)+\rho(e)\rho(z)\varphi([x,y],t)\\
    &+\rho(e)\rho([t,z])\varphi(x,y)-\rho([z,t])\varphi([x,y],e)+\rho([z,t])\rho(e)\varphi(x,y)\\
    =&\varphi([x,y],[[z,t],e])-\varphi([z,t],[[x,y],e])+\varphi([[x,y],[t,z]],e)+\rho([x,y])\varphi([z,t],e)\\
    &-\rho([z,t])\varphi([x,y],e)+\rho(e)\varphi([[x,y],z],t)+\rho(e)\varphi(z,[[x,y],t])-\rho(e)\rho(t)\varphi([x,y],z)\\
    &+\rho(e)\rho(z)\varphi([x,y],t)-\rho(e)\rho([x,y])\varphi(z,t)\\
    =&\varphi([x,y],[[z,t],e])-\varphi([z,t],[[x,y],e])+\varphi([[x,y],[t,z]],e)+\rho([x,y])\varphi([z,t],e)\\
    &-\rho([z,t])\varphi([x,y],e)+\rho(e)\varphi([x,y],[z,t]).
\end{align*}
Since $\varphi\in Z^{2}_{Lie}(L,V)$, we get $\delta^{3}(\omega)(x,y,z,t,e)=0$.
\end{proof}
\begin{lem}\label{associated}
Let $\alpha\in C^{1}_{Lie}(L,V)$. Then
\begin{equation}
    \delta^{1}(\alpha)(x,y,z)=\partial_{\rho}(\alpha)([x,y],z)-\rho(z)\partial_{\rho}(\alpha)(x,y).
\end{equation}
\end{lem}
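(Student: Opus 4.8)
The plan is to verify the identity by a direct expansion of both coboundary operators, since each is given by an explicit formula; no structural input beyond the definitions is needed. First I would record the data attached to the \textsf{L.t.sRep} pair $(L,[\c,\c,\c]=[\c,\c]\circ([\c,\c]\otimes Id_L);\theta_\rho)$ coming from the previous theorem: by definition $\theta_\rho(x,y)=\rho(y)\rho(x)$, and consequently
\[
D_\rho(x,y)=\theta_\rho(y,x)-\theta_\rho(x,y)=\rho(x)\rho(y)-\rho(y)\rho(x)=\rho([x,y]),
\]
where the last equality is precisely the fact that $\rho$ is a representation of the Lie algebra $(L,[\c,\c])$. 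I would also note that the induced ternary bracket is $[x,y,z]=[[x,y],z]$.

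Next, viewing $\alpha\in C^{1}_{Lie}(L,V)=Hom(L,V)$ at the same time as a $1$-cochain of the Yamaguti complex, I would expand the left-hand side using the $n=1$ instance of the Yamaguti coboundary (equivalently the $1$-cocycle formula of Definition \ref{cocycles}) and substitute the expressions for $\theta_\rho$, $D_\rho$ and the ternary bracket:
\begin{align*}
\delta^{1}(\alpha)(x,y,z)&=D_\rho(x,y)\alpha(z)-\theta_\rho(x,z)\alpha(y)+\theta_\rho(y,z)\alpha(x)-\alpha([x,y,z])\\
&=\rho([x,y])\alpha(z)-\rho(z)\rho(x)\alpha(y)+\rho(z)\rho(y)\alpha(x)-\alpha([[x,y],z]).
\end{align*}

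For the right-hand side I would use the Lie algebra coboundary at $p=1$, namely $\partial_\rho(\alpha)(a,b)=\rho(a)\alpha(b)-\rho(b)\alpha(a)-\alpha([a,b])$, to compute the two pieces $\partial_\rho(\alpha)([x,y],z)$ and $\rho(z)\partial_\rho(\alpha)(x,y)$ separately and then subtract. The crux is that the term $-\rho(z)\alpha([x,y])$ arising from $\partial_\rho(\alpha)([x,y],z)$ cancels exactly against the term $+\rho(z)\alpha([x,y])$ arising from $-\rho(z)\partial_\rho(\alpha)(x,y)$; after this cancellation the four surviving terms $\rho([x,y])\alpha(z)$, $-\rho(z)\rho(x)\alpha(y)$, $+\rho(z)\rho(y)\alpha(x)$ and $-\alpha([[x,y],z])$ match term-by-term the expansion of $\delta^{1}(\alpha)(x,y,z)$ displayed above, giving the identity.

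There is no genuine obstacle: the statement is an algebraic identity proved by matching two short expansions. The only points demanding care are the bookkeeping of the $\rho(z)\alpha([x,y])$ cancellation and the correct use of $D_\rho(x,y)=\rho([x,y])$, so I would make sure to invoke the representation property of $\rho$ at exactly that step.
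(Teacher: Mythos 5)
Your proposal is correct and follows essentially the same route as the paper: both proofs expand $\delta^{1}(\alpha)$ using $\theta_\rho(x,z)=\rho(z)\rho(x)$, $D_\rho(x,y)=\rho([x,y])$ and $[x,y,z]=[[x,y],z]$, and then regroup into $\partial_\rho(\alpha)([x,y],z)-\rho(z)\partial_\rho(\alpha)(x,y)$, the only cosmetic difference being that the paper adds and subtracts $\rho(z)\alpha([x,y])$ on one side while you expand both sides and observe the same cancellation.
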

\begin{proof}
For any $x,y,z\in L$, we have
\begin{align*}
    &D_{\rho}(x,y)\alpha(z)-\theta_{\rho}(x,z)\alpha(y)+\theta_{\rho}(y,z)\alpha(x)-\alpha([x,y,z])\\
    =&\rho([x,y])\alpha(z)-\rho(z)\rho(x)\alpha(y)+\rho(z)\rho(y)\alpha(x)-\alpha([[x,y],z])\\
    =&\rho([x,y])\alpha(z)-\rho(z)\alpha([x,y])-\alpha([[x,y],z])\\
    &-\rho(z)\Big(\rho(x)\alpha(y)-\rho(y)\alpha(x)-\alpha([x,y])\Big)\\
   = &\partial_{\rho}(\alpha)([x,y],z)-\rho(z)\partial_{\rho}(\alpha)(x,y).
\end{align*}
\end{proof}
\begin{pro}
Let $\varphi_1,\varphi_2\in Z^{2}_{Lie}(L,V)$. If $\varphi_1,\varphi_2$ are in the same cohomology class then $\omega_1,\omega_2$ defined by:
\begin{equation}
    \omega_i(x,y,z)=\varphi_i([x,y],z)-\rho(z)\varphi_i(x,y),\;i=1,2
\end{equation}
are in the same cohomology class of the associated L.t.s.
\end{pro}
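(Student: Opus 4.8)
The plan is to leverage the linearity of the assignment $\varphi \mapsto \omega$ together with Lemma \ref{associated}, which has already packaged the essential computation. First I would unwind the hypothesis: to say that $\varphi_1$ and $\varphi_2$ lie in the same class of $H^2_{Lie}(L,V)$ means precisely that their difference is a $2$-coboundary, i.e. there exists a $1$-cochain $\alpha \in C^1_{Lie}(L,V)$ with $\varphi_1 - \varphi_2 = \partial_\rho(\alpha)$.

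Next I would observe that the construction $\varphi \mapsto \omega$ is manifestly linear in $\varphi$. Writing $\omega_i(x,y,z) = \varphi_i([x,y],z) - \rho(z)\varphi_i(x,y)$ for $i=1,2$ and subtracting yields, for all $x,y,z \in L$,
\begin{equation*}
(\omega_1 - \omega_2)(x,y,z) = (\varphi_1 - \varphi_2)([x,y],z) - \rho(z)(\varphi_1 - \varphi_2)(x,y) = \partial_\rho(\alpha)([x,y],z) - \rho(z)\,\partial_\rho(\alpha)(x,y).
\end{equation*}

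The decisive step is then to recognize the right-hand side: it is exactly the expression appearing in Lemma \ref{associated}, which asserts that $\delta^{1}(\alpha)(x,y,z) = \partial_\rho(\alpha)([x,y],z) - \rho(z)\,\partial_\rho(\alpha)(x,y)$. Hence $(\omega_1 - \omega_2)(x,y,z) = \delta^{1}(\alpha)(x,y,z)$, so that $\omega_1 - \omega_2 = \delta^{1}(\alpha)$ is a $3$-coboundary of the associated \textsf{L.t.sRep} pair $(L, [\c,\c,\c]=[\c,\c]\circ([\c,\c]\otimes Id_L);\theta_{\rho})$. Since the theorem preceding Lemma \ref{associated} already shows that each $\omega_i$ is a $3$-cocycle, it follows that $\omega_1$ and $\omega_2$ represent the same class in the Yamaguti cohomology, as claimed.

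I do not expect a genuine obstacle here: the only real content is the bookkeeping identity already established in Lemma \ref{associated}, and the remaining ingredients are merely the linearity of $\varphi \mapsto \omega$ and the reading of ``same cohomology class'' as ``differing by a coboundary.'' The single point worth noting for self-containedness is that $\alpha$ furnishes a legitimate $1$-cochain of the L.t.s, so that $\delta^{1}(\alpha)$ is defined; this is immediate because $C^1_{Lie}(L,V)=\mathrm{Hom}(L,V)=C^1(L,V)$ as cochain spaces, whence the coboundary relation $\omega_1-\omega_2=\delta^1(\alpha)$ lives in the correct complex.
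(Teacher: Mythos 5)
Your proof is correct and follows essentially the same route as the paper's: write the difference $\varphi_1-\varphi_2$ as $\partial_\rho(\alpha)$, use linearity of $\varphi\mapsto\omega$, and invoke Lemma \ref{associated} to identify $\omega_1-\omega_2$ with $\delta^{1}(\alpha)$. Your added remark that $\alpha$ is a legitimate $1$-cochain of the L.t.s is a small but welcome point of care that the paper leaves implicit.
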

\begin{proof}
Let $\varphi_1,\varphi_2\in Z^{2}_{Lie}(L,V)$ be two cocycles in the same cohomology class, that is
\begin{equation*}
   \varphi_2-\varphi_1=\partial_{\rho}(\alpha),\;\alpha\in C^{1}_{Lie}(L,V). 
\end{equation*}
and 
\begin{equation*}
    \omega_i(x,y,z)=\varphi_i([x,y],z)-\rho(z)\varphi_i(x,y),\;i=1,2.
\end{equation*}
According to Lemma \ref{associated},  we have
\begin{align*}
    &\omega_2(x,y,z)-\omega_1(x,y,z)=(\varphi_2-\varphi_1)([x,y],z)-\rho(z)(\varphi_2-\varphi_1)(x,y)\\
    &\quad \quad \quad \quad \quad \quad \quad \quad \quad\;\;\; =\partial_{\rho}(\alpha)([x,y],z)-\rho(z)\partial_{\rho}(x,y)\\
    &\quad \quad \quad \quad \quad \quad \quad \quad \quad\;\;\;=\delta^{1}(\alpha)(x,y,z),\;\alpha\in C^{1}_{Lie}(L,V),
\end{align*}
which means that $\omega_1$ and $\omega_2$ are in the same cohomology class.
\end{proof}
\begin{pro}
Let $T:V \rightarrow L$ be an $\mathcal{O}$-operator on a LieRep pair $(L,[\c,\c];\rho)$. Then $T$ is also an $\mathcal{O}$-operator on the \textsf{L.t.sRep} pair $(L, [\c,\c,\c]=[\c,\c]\circ ([\c,\c]\otimes Id_L);\theta_{\rho})$.
\end{pro}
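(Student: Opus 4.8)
The plan is to reduce the assertion to verifying the defining identity \eqref{O op on lts} of an $\mathcal{O}$-operator on a L.t.s for the induced structures, and then to obtain that identity by applying the binary Lie $\mathcal{O}$-operator condition twice. Recall that the induced L.t.s bracket is $[x,y,z]=[[x,y],z]$, that $\theta_\rho(x,y)=\rho(y)\rho(x)$, and that $D_\rho(x,y)=\theta_\rho(y,x)-\theta_\rho(x,y)=\rho(x)\rho(y)-\rho(y)\rho(x)=\rho([x,y])$, the last equality following from the representation property of $\rho$ exactly as in the computation above. Thus, for $T$ to be an $\mathcal{O}$-operator on the induced \textsf{L.t.sRep} pair, I must show, for all $u,v,w\in V$,
\begin{equation*}
[[Tu,Tv],Tw]=T\Big(\rho([Tu,Tv])w+\rho(Tw)\rho(Tv)u-\rho(Tw)\rho(Tu)v\Big).
\end{equation*}

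First I would use that $T$ is a Lie $\mathcal{O}$-operator to set $p:=\rho(Tu)v-\rho(Tv)u\in V$, so that $[Tu,Tv]=Tp$. The left-hand side then becomes $[Tp,Tw]$, to which I apply the Lie $\mathcal{O}$-operator condition a second time:
\begin{equation*}
[Tp,Tw]=T\Big(\rho(Tp)w-\rho(Tw)p\Big)=T\Big(\rho([Tu,Tv])w-\rho(Tw)(\rho(Tu)v-\rho(Tv)u)\Big),
\end{equation*}
where in the last step I substituted $Tp=[Tu,Tv]$ together with the definition of $p$. Expanding the inner expression yields $\rho([Tu,Tv])w+\rho(Tw)\rho(Tv)u-\rho(Tw)\rho(Tu)v$, which matches the required right-hand side term by term.

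There is essentially no obstacle beyond careful bookkeeping: the whole argument rests on iterating the defining identity of a Lie $\mathcal{O}$-operator and recognizing $\rho([Tu,Tv])=D_\rho(Tu,Tv)$ and $\rho(Tw)\rho(Tu)=\theta_\rho(Tu,Tw)$. The only point that warrants attention is the sign and order convention in $\theta_\rho(x,y)=\rho(y)\rho(x)$, whose arguments are reversed relative to the composition order; so I would double-check that the two terms $\theta_\rho(Tv,Tw)u$ and $-\theta_\rho(Tu,Tw)v$ correspond respectively to $+\rho(Tw)\rho(Tv)u$ and $-\rho(Tw)\rho(Tu)v$, which they do. Hence $T$ is an $\mathcal{O}$-operator on the induced \textsf{L.t.sRep} pair.
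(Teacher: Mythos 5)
Your proposal is correct and follows essentially the same route as the paper: both apply the binary Lie $\mathcal{O}$-operator identity twice, first to rewrite $[Tu,Tv]$ as $T(\rho(Tu)v-\rho(Tv)u)$ and then to the resulting bracket with $Tw$, before identifying $\rho([Tu,Tv])=D_\rho(Tu,Tv)$ and $\rho(Tw)\rho(Tu)=\theta_\rho(Tu,Tw)$. The only cosmetic difference is that you apply the identity directly to the element $p=\rho(Tu)v-\rho(Tv)u$, while the paper first splits $[Tp,Tw]$ into two terms by bilinearity; the content is identical.
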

\begin{proof}
For any $u,v,w\in V$, we have
\begin{align*}
    &[Tu,Tv,Tw]=[[Tu,Tv],Tw]=[T(\rho(Tu)v-\rho(Tv)u),Tw]\\
    &\quad \quad \quad \quad\; \;\;\;\; =[T\rho(Tu)v,Tw]-[T\rho(Tv)u,Tw]\\
    &\quad \quad \quad \quad\; \;\;\;\; =T\Big(\rho(T\rho(Tu)v)w-\rho(Tw)\rho(Tu)v\Big)-T\Big(\rho(T\rho(Tv)u)w-\rho(Tw)\rho(Tv)u\Big)\\
    &\quad \quad \quad \quad\; \;\;\;\;=T\Big(\rho([Tu,Tv])w-\rho(Tw)\rho(Tu)v +\rho(Tw)\rho(Tv)u\Big)\\
    &\quad \quad \quad \quad\; \;\;\;\;=T\Big(D_{\rho}(Tu,Tv)w+\theta_{\rho}(Tv,Tw)u-\theta_{\rho}(Tu,Tw)v\Big).
\end{align*}

Hence $T$ is a $\mathcal{O}$-operator on the \textsf{L.t.sRep} pair $(L, [\c,\c,\c]=[\c,\c]\circ ([\c,\c]\otimes Id_L);\theta_{\rho})$.
\end{proof}
Now there are two methods of constructing a L.t.s structure on $V$ from a LieRep pair $(L, [\c, \c]; \rho)$.
On the one hand, we induce a L.t.s structure $(V, [\c, \c, \c]_T )$ of the $\mathcal{O}$-operator $T$ on
the corresponding  \textsf{L.t.sRep} pair $(L, [\c,\c,\c]=[\c,\c]\circ ([\c,\c]\otimes Id_L);\theta_{\rho})$, where
\begin{equation}
[u, v, w]_T
=D_{\rho}(T u, Tv)w + \theta_{\rho}(Tv, Tw)u
-\theta_{\rho}(T u, Tw)v.
\end{equation}
On the other hand, we firstly induce a Lie algebra $(V, [\c, \c]_T )$ of $T$ on the LieRep pair $(L, [\c, \c] ;\rho)$,
and then we give a L.t.s structure induced from the Lie algebra $(V, [\c, \c]_T )$ by the bracket $[\c,\c,\c]_T=[\c,\c]_T\circ ([\c,\c]_T\otimes Id_V)$. These two methods give us the same L.t.s structure on $V$.\\

The following results hold for the cohomology groups of an $\mathcal{O}$-operator and here we omit the proofs.
\begin{cor}
Every $1$-cocycle for the cohomology of an $\mathcal{O}$-operator $T$ on a LieRep  $(L,[\c,\c];\rho)$  is a $1$-cocycle for the cohomology of an $\mathcal{O}$-operator on the  \textsf{L.t.sRep} pair $(L, [\c,\c,\c]=[\c,\c]\circ ([\c,\c]\otimes Id_L);\theta_{\rho})$.
\end{cor}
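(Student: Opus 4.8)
The plan is to recognize that the cohomology of $T$ as an $\mathcal{O}$-operator on the \textsf{L.t.sRep} pair is precisely the Yamaguti cohomology of the descendent L.t.s $(V,[\cdot,\cdot,\cdot]_T)$ with coefficients in the induced representation $(L,\theta_T)$, whereas the cohomology of $T$ as an $\mathcal{O}$-operator on the \textsf{LieRep} pair is the cohomology of the descendent Lie algebra $(V,[\cdot,\cdot]_T)$ with coefficients in $(L,\rho_T)$. First I would therefore reduce the statement to the theorem already proved above (every $1$-cocycle for the cohomology of a \textsf{LieRep} pair is a $1$-cocycle for the cohomology of its associated \textsf{L.t.sRep} pair), applied not to the original pair $(L,[\cdot,\cdot];\rho)$ but to the descendent pair $(V,[\cdot,\cdot]_T;\rho_T)$.

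For this reduction to work, I must verify that the \textsf{L.t.sRep} pair attached to the $\mathcal{O}$-operator $T$ on the L.t.s side coincides with the \textsf{L.t.sRep} pair induced, via the construction $[\cdot,\cdot,\cdot]=[\cdot,\cdot]\circ([\cdot,\cdot]\otimes Id)$ and $\theta_{\rho_T}(u,v)=\rho_T(v)\rho_T(u)$, from $(V,[\cdot,\cdot]_T;\rho_T)$. On the level of the ternary brackets this is exactly the coincidence recorded just above, namely $[u,v,w]_T=[[u,v]_T,w]_T$. On the level of representations I would establish the identity $\theta_T=\theta_{\rho_T}$, that is
\begin{align*}
\theta_T(u,v)x=[x,Tu,Tv]+T\big(\theta_\rho(x,Tv)u-D_\rho(x,Tu)v\big)=\rho_T(v)\rho_T(u)x,\qquad \forall x\in L,\ u,v\in V .
\end{align*}

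Expanding $\rho_T(v)\rho_T(u)x$ through $\rho_T(u)x=[Tu,x]+T\rho(x)u$ produces, besides the expected terms, a contribution $[Tv,T\rho(x)u]$; here the decisive step is to rewrite it with the $\mathcal{O}$-operator identity $[Ta,Tb]=T(\rho(Ta)b-\rho(Tb)a)$, after which the spurious term $T\rho(T\rho(x)u)v$ cancels and one is left exactly with $[[x,Tu],Tv]+T\rho(Tv)\rho(x)u-T\rho([x,Tu])v$, which matches $\theta_T(u,v)x$ (using $D_\rho(x,y)=\rho([x,y])$ and $[x,Tu,Tv]=[[x,Tu],Tv]$). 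This cancellation, which is the only place where the hypothesis that $T$ be an $\mathcal{O}$-operator is genuinely used, is the main obstacle; everything else is formal. Once $\theta_T=\theta_{\rho_T}$ is in hand, $D_T=D_{\rho_T}$ follows since $D$ is determined by $\theta$, so the descendent L.t.s data of $T$ on the two sides are fully identified.

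With these identifications the Corollary is immediate: a $1$-cocycle for the cohomology of the $\mathcal{O}$-operator $T$ on the \textsf{LieRep} pair is by definition a $1$-cocycle of the descendent pair $(V,[\cdot,\cdot]_T;\rho_T)$, hence by the theorem above a $1$-cocycle of the induced \textsf{L.t.sRep} pair $(V,[\cdot,\cdot,\cdot]_T;\theta_{\rho_T})=(V,[\cdot,\cdot,\cdot]_T;\theta_T)$, and this is precisely a $1$-cocycle for the cohomology of the $\mathcal{O}$-operator $T$ on the \textsf{L.t.sRep} pair $(L,[\cdot,\cdot,\cdot]=[\cdot,\cdot]\circ([\cdot,\cdot]\otimes Id_L);\theta_{\rho})$.
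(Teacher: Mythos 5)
Your proposal is correct and follows the route the paper evidently intends: the paper omits the proof of this corollary, presenting it immediately after the observation that the two descendent L.t.s structures on $V$ coincide and after the theorem comparing $1$-cocycles of a LieRep pair with those of its associated L.t.sRep pair, so the reduction to that theorem applied to the descendent pair $(V,[\cdot,\cdot]_T;\rho_T)$ is exactly the intended argument. The one substantive ingredient the paper leaves implicit --- the identification $\theta_T(u,v)=\rho_T(v)\rho_T(u)$ of the induced representations on $L$, whose verification genuinely uses the $\mathcal{O}$-operator identity to cancel the term $T\rho(T\rho(x)u)v$ --- is precisely the step you isolate, and your computation of it is correct, so the argument is complete.
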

\begin{cor}
Let $\varphi\in Z^{2}_{T}(V,L)$ be a $2$-cocycle on the cohomology of an $\mathcal{O}$-operator $T$ on a LieRep  $(L,[\c,\c];\rho)$. Then $\omega(u,v,w)=\varphi([u,v]_T,w)-\rho_T(w)\varphi(u,v)$ is a $3$-cocycle of the cohomology of  an $\mathcal{O}$-operator $T$ on the \textsf{L.t.sRep} pair $(L, [\c,\c,\c]=[\c,\c]\circ ([\c,\c]\otimes Id_L);\theta_{\rho})$.
\end{cor}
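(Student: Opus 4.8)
The plan is to deduce this corollary from the earlier theorem (the one that turns a Lie $2$-cocycle $\varphi$ into the $3$-cocycle $\omega(x,y,z)=\varphi([x,y],z)-\rho(z)\varphi(x,y)$ of the induced \textsf{L.t.sRep} pair), by exploiting the self-similarity of the $\mathcal{O}$-operator construction. The observation driving everything is that an $\mathcal{O}$-operator $T$ on the \textsf{LieRep} pair $(L,[\c,\c];\rho)$ produces a \emph{new} \textsf{LieRep} pair, namely the descendent Lie algebra $(V,[\c,\c]_T)$ together with the representation $(L,\rho_T)$, and that, by the results recalled from \cite{Tang}, the coboundary $\widetilde{d}_T$ is nothing but the Chevalley--Eilenberg coboundary of $(V,[\c,\c]_T;\rho_T)$. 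Consequently the cohomology $H^{p}_T(V,L)$ of $T$ as an $\mathcal{O}$-operator on $(L,[\c,\c];\rho)$ is exactly the Lie algebra cohomology of $(V,[\c,\c]_T;\rho_T)$, so that $Z^{2}_T(V,L)$ is precisely the space of ordinary Lie $2$-cocycles of the pair $(V,[\c,\c]_T;\rho_T)$, and $\varphi$ may be regarded as such a cocycle.

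The step I would isolate as a lemma is that the Lie-to-L.t.s construction commutes with the $\mathcal{O}$-operator descent. Concretely, I claim the \textsf{L.t.sRep} pair $(V,[\c,\c,\c]_T;\theta_T)$ obtained from $T$ viewed as an $\mathcal{O}$-operator on $(L,[\c,\c,\c]=[\c,\c]\circ([\c,\c]\otimes Id_L);\theta_{\rho})$ coincides with the \textsf{L.t.sRep} pair produced by first forming the descendent \textsf{LieRep} pair $(V,[\c,\c]_T;\rho_T)$ and then applying the Lie-to-L.t.s construction theorem. The bracket part, $[u,v,w]_T=[[u,v]_T,w]_T$, is already recorded in the paper. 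For the representation part I must verify the identity $\theta_T(u,v)=\rho_T(v)\rho_T(u)$. Expanding $\rho_T(v)\rho_T(u)x$ yields the four terms $[Tv,[Tu,x]]$, $[Tv,T\rho(x)u]$, $T\rho([Tu,x])v$ and $T\rho(T\rho(x)u)v$; rewriting $[Tv,T\rho(x)u]$ through the $\mathcal{O}$-operator equation $[Ta,Tb]=T(\rho(Ta)b-\rho(Tb)a)$ cancels the term $T\rho(T\rho(x)u)v$ and leaves exactly $\theta_T(u,v)x=[x,Tu,Tv]+T(\theta_{\rho}(x,Tv)u-D_{\rho}(x,Tu)v)$ after using $[x,Tu,Tv]=[[x,Tu],Tv]=[Tv,[Tu,x]]$, $\theta_{\rho}(x,Tv)u=\rho(Tv)\rho(x)u$ and $D_{\rho}(x,Tu)=\rho([x,Tu])$.

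With this lemma in hand the corollary is immediate: since $\varphi$ is a $2$-cocycle for the \textsf{LieRep} pair $(V,[\c,\c]_T;\rho_T)$, the earlier theorem applied to that pair asserts that $\omega(u,v,w)=\varphi([u,v]_T,w)-\rho_T(w)\varphi(u,v)$ is a $3$-cocycle of its induced \textsf{L.t.sRep} pair, and by the lemma this induced pair is exactly $(V,[\c,\c,\c]_T;\theta_T)$, whose $3$-cocycles are by definition the $3$-cocycles of the cohomology of the $\mathcal{O}$-operator $T$ on $(L,[\c,\c,\c];\theta_{\rho})$. I expect the only genuine work to be the verification of the representation identity $\theta_T(u,v)=\rho_T(v)\rho_T(u)$; everything else is a matching of the two cohomology theories. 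A purely computational alternative would be to expand $\delta^{3}_T(\omega)$ directly and substitute the defining cocycle relation for $\varphi$, mirroring the proof of the theorem for $\omega(x,y,z)=\varphi([x,y],z)-\rho(z)\varphi(x,y)$, but the naturality argument avoids repeating that long calculation.
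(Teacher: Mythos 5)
Your proposal is correct. Note that the paper itself omits the proof of this corollary entirely (``here we omit the proofs''), so there is no written argument to compare against; the paper only records, just before the corollaries, that the two ways of putting a L.t.s structure on $V$ (descend by $T$ then triple, versus triple then descend) agree at the level of the bracket. Your contribution is to complete that remark at the level of representations by proving $\theta_T(u,v)=\rho_T(v)\rho_T(u)$, which is exactly the missing ingredient needed to make the naturality argument go through. I checked this identity: expanding $\rho_T(v)\rho_T(u)x=[Tv,[Tu,x]]+[Tv,T\rho(x)u]+T\rho([Tu,x])v+T\rho(T\rho(x)u)v$ and rewriting $[Tv,T\rho(x)u]=T\bigl(\rho(Tv)\rho(x)u-\rho(T\rho(x)u)v\bigr)$ does cancel the last term and leaves $[Tv,[Tu,x]]+T\rho(Tv)\rho(x)u+T\rho([Tu,x])v$, which equals $[x,Tu,Tv]+T\bigl(\theta_{\rho}(x,Tv)u-D_{\rho}(x,Tu)v\bigr)=\theta_T(u,v)x$ since $[[x,Tu],Tv]=[Tv,[Tu,x]]$ and $D_\rho(x,Tu)=\rho([x,Tu])$. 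Combined with the observation that $\widetilde{d}_T$ is literally the Chevalley--Eilenberg differential of the \textsf{LieRep} pair $(V,[\c,\c]_T;\rho_T)$, so that $Z^2_T(V,L)=Z^2_{Lie}(V,L)$ for that pair, the corollary then follows by applying the earlier theorem to $(V,[\c,\c]_T;\rho_T)$. This naturality route is cleaner and more informative than the obvious alternative of re-running the five-variable expansion of $\delta^3_T(\omega)$ verbatim with $[\c,\c]_T$, $\rho_T$ in place of $[\c,\c]$, $\rho$; it also explains \emph{why} the corollary is an instance of the theorem rather than a parallel computation.
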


\end{document}